%------------------------------------------------------------------------------
% Beginning of journal.tex
%------------------------------------------------------------------------------
%
% AMS-LaTeX version 2 sample file for journals, based on amsart.cls.
%
%        ***     DO NOT USE THIS FILE AS A STARTER.      ***
%        ***  USE THE JOURNAL-SPECIFIC *.TEMPLATE FILE.  ***
%
% Replace amsart by the documentclass for the target journal, e.g., tran-l.
%
\documentclass{amsart}

%\linespread{1.0}

\usepackage{amssymb}

\usepackage[pagewise, displaymath, mathlines]{lineno}

\makeatletter
\let\my@abstract=\relax
\def\abstract#1{%
  \def\my@abstract{%
    \normalfont\Small
    \list{}{\labelwidth\z@
      \leftmargin3pc \rightmargin\leftmargin
      \listparindent\normalparindent \itemindent\z@
      \parsep\z@ \@plus\p@
      \let\fullwidthdisplay\relax
    }%
    \item[\hskip\labelsep\scshape\abstractname.]%
    #1
  \endlist}}
\def\@setabstracta{%
  \ifx\my@abstract\relax
  \else
    \skip@20\p@ \advance\skip@-\lastskip
    \advance\skip@-\baselineskip \vskip\skip@
  \my@abstract
    \prevdepth\z@ % because \abstractbox is a vtop
  \fi
}
\makeatother

\newtheorem{theorem}{Theorem}[section]
\newtheorem{lemma}[theorem]{Lemma}
\newtheorem{cor}[theorem]{Corollary}

\theoremstyle{definition}

\newtheorem{example}[theorem]{Example}

\newtheorem{rem}[theorem]{Remark}

\newtheorem{prop}[theorem]{Proposition}
\theoremstyle{remark}

\numberwithin{equation}{section}

%    Absolute value notation

%    Blank box placeholder for figures (to avoid requiring any
%    particular graphics capabilities for printing this document).

\usepackage{xypic}
\xyoption{all}

\begin{document}

%\linenumbers

\title[Witt equivalence of function fields]{Witt equivalence of function fields over %local fields and
global fields}

\author{Pawe\l \ G\l adki and Murray Marshall}

\address{Institute of Mathematics,
University of Silesia, \newline \indent
ul. Bankowa 14, 40-007 Katowice, Poland \newline \indent
\and \newline \indent
Department of Computer Science,
AGH University of Science and Technology, \newline \indent
al. Mickiewicza 30, 30-059 Krak\'ow, Poland}
\email{pawel.gladki@us.edu.pl}

\address{Department of Mathematics and Statistics,
University of Saskatchewan, \newline \indent
106 Wiggins Rd., Saskatoon, SK S7N 5E6, Canada}
\email{marshall@math.usask.ca}

%\date{October 26, 2010}

\subjclass[2000]{Primary 11E81, 12J20 Secondary 11E04, 11E12}
\keywords{ symmetric bilinear forms, quadratic forms, Witt equivalence of fields, function fields, global fields, valuations, Abhyankar valuations}

\thanks{The research of the second author was supported in part by NSERC of Canada.}

\abstract{ Witt equivalent fields can be understood to be fields having the same symmetric bilinear form theory. Witt equivalence of finite fields, local fields and global fields is well understood. Witt equivalence of function fields of curves defined over archimedean local fields is also well understood.
In the present paper, Witt equivalence of general function fields over global fields is studied. It is proved that for any two such fields $K,L$, any Witt equivalence $K \sim L$ induces a cannonical bijection $v \leftrightarrow w$ between Abhyankar valuations $v$ on $K$ having residue field not finite of characteristic $2$ and  Abhyankar valuations $w$ on $L$ having residue field not finite of characteristic $2$. The main tool used in the proof is a method for constructing valuations
due to Arason, Elman and Jacob \cite{aej}. The method of proof does not extend to
non-Abhyankar valuations. The result is applied to study Witt equivalence of function fields
over number fields. It is proved, for example, that if $k$, $\ell$ are number fields and $k(x_1,\dots,x_n) \sim \ell(x_1,\dots,x_n)$, $n\ge 1$, then $k \sim \ell$ and
the  $2$-ranks of the ideal class groups of $k$ and $\ell$ are equal.
}

\maketitle

\section{introduction}

Let $K$ be a field.
Denote by $W(K)$ the Witt ring of (non-degenerate) symmetric bilinear forms over $K$; see \cite{l}, \cite{M1} or \cite{w} for the definition in case $\operatorname{char}(K) \ne 2$ and \cite{KR}, \cite{krw} or \cite{mh} for the definition in the general case. Denote by $Q(K)$ the quadratic hyperfield of $K$; roughly speaking this is the same thing as the quadratic form scheme of $K$ \cite{kss} \cite{l};
see Section 3 for the definition.
We say two fields $K,L$ are \it Witt equivalent, \rm denoted $K \sim L$, if $Q(K) \cong Q(L)$ as hyperfields, equivalently, if $W(K) \cong W(L)$ as rings; see Proposition \ref{harrison} below. Witt equivalent fields can be understood as fields having the same symmetric bilinear form theory.

Witt equivalence of finite fields and local fields is well understood.   Witt equivalence of global fields is considered in \cite{C}, \cite{Petal}, \cite{Sz1}, \cite{Sz2}, \cite{Sz0}. Witt equivalence of function fields of curves defined over local and global fields is considered in \cite{gh}, \cite{kop1}, \cite{kop2}. (Note, however, that there
is a serious error in the proof of Theorem 1.3 in \cite{kop1}, in the proof of (1.3.1) $\Rightarrow$ (1.3.2).)

It is well-known that any hyperfield isomorphism $\alpha : Q(K) \rightarrow Q(L)$ carries orderings of $K$ to orderings of $L$ in the sense that if $P \subseteq K^*$ is the positive cone of an ordering  of $K$ then
\begin{linenomath}\[Q = \{ s \in L^* :  \overline{s} = \alpha(\overline{t}) \text{ for some } t\in P\}\]\end{linenomath}
 is the positive cone of an ordering of $L$. Here, $\overline{x}$ denotes the image of $x$ under the canonical map $K^* \rightarrow K^*/K^{*2}$. This correspondence can also be deduced from the fact that orderings on $K$ correspond to ring homomorphisms from $W(K)$ to $\mathbb{Z}$.

It is natural to wonder if a similar result holds for valuations, i.e., if the valuations of a field $K$ can be detected by looking at the quadratic hyperfield $Q(K)$.
At this level of generality the result is %most certainly
false. E.g., $\mathbb{C} \sim \mathbb{F}_2$ and $\mathbb{C}((x)) \sim \mathbb{F}_5$. In each of these examples, the first field has lots of non-trivial valuations, but the second field has only the trivial valuation. At the same time, there is a detection procedure which works for certain sorts of fields. E.g., if $K,L$ are global fields of characteristic $\ne 2$, then any hyperfield isomorphism $\alpha : Q(K) \rightarrow Q(L)$ induces in a cannonical way a bijection $v \leftrightarrow w$ between valuations $v$ of $K$ and valuations $w$ of $L$; see \cite{C}, \cite{Petal}, \cite{Sz1}, \cite{Sz2}, \cite{Sz0}. The main tool  for setting up this bijection is a method of constructing valuations described in \cite{aej}, which is based, in turn, on earlier constructions, of a similar sort, described in \cite{j0} and \cite{wa}.

In the present paper we extend the above-mentioned result for global fields, proving that if $K,L$ are function fields over global fields then any hyperfield isomorphism $\alpha : Q(K) \rightarrow Q(L)$ induces in a canonical way a bijection $v \leftrightarrow w$ between Abhyankar valuations $v$ of $K$ having residue field not finite of characteristic $2$ and Abhyankar valuations $w$ of $L$ having residue field not finite of characteristic $2$; see Theorem \ref{main theorem 2}.

Our results are applied to study Witt equivalence of function fields over number fields; see
Corollary \ref{rational points}, Theorem \ref{genus zero case} and Corollary \ref{infinitely many}.
It is proved, for example, that if $k(x_1,\dots,x_n) \sim \ell(x_1,\dots,x_n)$, where $n\ge 1$ and $k$ and $\ell$ are number fields, then $k \sim \ell$ and the $2$-ranks of the ideal class groups of $k$ and $\ell$ are equal.

In Sections 2 and 3 we recall basic terminology which is used throughout the paper.
In Section 4 we establish basic connections between quadratic hyperfields and valuations. In Section 5 we apply the result in \cite{aej} to understand the behavior of valuations under Witt equivalence; see Theorem \ref{main lemma}. In Section 6 we recall the terminology of function fields, global fields and Abhyankar valuations, and we introduce the idea of nominal transcendence degree.

The main new results in the paper are found in Sections 5,7 and 8.

The authors would like to express their thanks to the annonymous referee for his careful reading and useful improvements that made the presentation more comprehensible.

\section{hyperfields}

A hyperfield is an object like a field, but where the addition is allowed to be multivalued. Hyperfields
were introduced by Krasner \cite{kr0}, \cite{kr}, in connection with his work on valuations.
Hyperfields were also introduced independently in \cite{M2} where they were called multifields.

A \it hyperfield \rm is a system $(H, +, \cdot, -, 0, 1)$ where $H$ is a set, $+$ is a multivalued binary operation on $H$, i.e., a function from $H\times H$ to the set of all subsets of $H$, $\cdot$ is a binary operation on $H$, $- : H \rightarrow H$ is a function, and $0,1$ are elements of $H$ such that

\smallskip

I. $(H,+,-,0)$ is a canonical hypergroup, terminology as in Mittas \cite{mi}, i.e.,
\smallskip

(1) $c\in a+b$ $\Rightarrow$ $a \in c+(-b)$,

(2) $a \in b+0$ iff $a=b$,

(3) $(a+b)+c = a+(b+c)$, and

(4) $a+b= b+a$; and

\smallskip

II. $(H,\cdot, 1)$ is a commutative monoid, i.e., $(ab)c=a(bc)$, $ab=ba$,
and $a1=a$ for all $a, b, c \in A$; and

\smallskip

III. $a0 = 0$ for all $a\in H$; and

\smallskip
IV. $a(b+c) \subseteq ab+ac$; and

\smallskip

V. $1\ne 0$ and every non-zero element has a multiplicative inverse.

\smallskip

Hyperfields form a category. A \it morphism \rm from $H_1$ to $H_2$, where $H_1$, $H_2$ are hyperfields, is a function $\alpha : H_1 \rightarrow H_2$ which satisfies $\alpha(a+b) \subseteq \alpha(a)+\alpha(b)$, $\alpha(ab) = \alpha(a)\alpha(b)$, $\alpha(-a)=-\alpha(a)$, $\alpha(0)=0$, $\alpha(1)=1$.

Here are some elementary consequences of the hyperfield axioms: (i) $-0=0$ (ii) $-(-a)=a$ (iii) $a+b \ne \emptyset$ %(iv) $0 \in a+b$ iff $b=-a$
(iv) $a(-b)=-(ab)$ (v) $(-a)(-b)=ab$.

Every field is a hyperfield.
The simplest non-trivial examples of hyperfields are the quotient hyperfields.
If $T$ is a subgroup of $H^*$, where $H$ is a field or hyperfield, the \it quotient hyperfield \rm $H/_m T = (H/_m T, +,\cdot, -, 0,1)$ is defined as follows:
$H/_m T$ is the set of equivalence classes with respect to the equivalence relation $\sim$ on $H$ defined by $a\sim b$ iff $as = bt$ for some $s,t\in T$. The operations on $H/_m T$ are the obvious ones induced by the corresponding operations on $H$: Denote by $\overline{a}$ the equivalence class of $a$. Then $\overline{a} \in \overline{b}+\overline{c}$ iff $as \in bt+cu$ for some $s,t,u \in T$, $\overline{a}\overline{b} = \overline{ab}$, $-\overline{a} = \overline{-a}$. Also, $0 = \overline{0}$, and $1 = \overline{1}$. The group of non-zero elements of $H/_m T$ is $H^*/T$. The subscript $m$ here is used to indicate that $H/_m T$ is a quotient modulo a multiplicative subgroup $T$ and was introduced in \cite{M2}: although we call $H/_m T$ a quotient, its construction really resembles more that of a localisation, and the authors believe that denoting it simply by $H/T$ might be somewhat misleading.

The hyperfield associated to an ordered abelian group $\Gamma  := (\Gamma, \cdot, 1, \le)$ is $\Gamma \cup \{ 0\} := (\Gamma \cup \{ 0\}, +, \cdot, -, 0,1)$,
where
\begin{linenomath}\[a+ b := \begin{cases} b \text{ if } a<b \\ a \text{ if } b<a\\ [0,a] \text{ if } a = b \end{cases},\]\end{linenomath}
$a\cdot 0 = 0\cdot a := 0$ and $-a :=a$.  Convention: $0<a$ for all $a \in \Gamma$.

A valuation on a field $K$ is just a morphism $v : K \rightarrow \Gamma \cup \{ 0\}$, for some ordered abelian group $\Gamma:= (\Gamma, \cdot, 1, \le)$. If $\Gamma$ is the value group of $v$, i.e., if $v$ is surjective, then $v$ induces an isomorphism $\overline{v} : K/_m U \rightarrow \Gamma \cup \{ 0 \}$, where $U$ is the unit group of $v$.\footnote{The foregoing example notwithstanding, in what follows we will always use the more standard
additive notation for valuations, i.e., a valuation is a function $v : K \rightarrow \Gamma\cup \{ \infty\}$, for some ordered abelian group $\Gamma := (\Gamma, +, 0, \ge)$.}

See \cite{mas} for an example of a hyperfield which is not realizable as a quotient hyperfield of a field.

We are mostly interested in one special example of a quotient hyperfield, namely the hyperfield $K/_m {K^*}^2$, for a fixed field $K$, and its particular connection to symmetric bilinear forms over $K$. Observe that, for a field $K$, and for $z, a, b \in K$ the following equivalence holds true:
$$z = ax^2 + by^2 \mbox{ for some } x, y \in K^*  \mbox{ if and only if } \overline{z} \in \overline{a} + \overline{b} \mbox{ in } K/_m {K^*}^2.$$
It turns out that, in fact, a slightly more general equivalence holds true, at least when $K \neq \mathbb{F}_2, \mathbb{F}_3$, $\operatorname{char} (K) \neq 2$ (see Proposition \ref{properties} below for details), namely:
$$z = ax^2 + by^2 \mbox{ for some } x, y \in K  \mbox{ if and only if } \overline{z} \in \overline{a} + \overline{b} \mbox{ in } K/_m {K^*}^2.$$
This equivalence fails to hold without these additional assumptions. Here it is necessary to modify the definition of addition in $K/_m {K^*}^2$, defining $\overline{a} + \overline{b}$, for $a, b \neq 0$, "by hand". Fortunately enough, this can be also done more conceptually, by defining a new addition on any given hyperfield.

If $H = (H, +,\cdot, -, 0,1)$ is a hyperfield, the \it prime addition \rm on $H$ is defined by
\begin{linenomath}\[a+'b = \begin{cases} a+b &\text{ if one of } a,b \text{ is zero } \\ a+b\cup \{ a,b\} &\text{ if } a \ne 0, \ b\ne 0,\ b \ne -a \\ H &\text{ if } a \ne 0, \ b\ne 0,\ b = -a \end{cases}.\]\end{linenomath}

In the next section we use the following result:

\begin{prop} For any hyperfield $H:=(H, +,\cdot, -, 0,1)$, $H':=(H, +',\cdot, -, 0,1)$ is also a hyperfield.
\end{prop}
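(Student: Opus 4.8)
The plan is to verify the hyperfield axioms I--V for $H' = (H, +', \cdot, -, 0, 1)$ directly, leaning on the fact that $H$ already satisfies them. Axioms II, III, V involve only $\cdot$, $-$, $0$, $1$ and are literally unchanged, so nothing needs to be done there. Axiom I(2) ($a \in b +' 0$ iff $a = b$) is immediate since $+'$ agrees with $+$ whenever one argument is $0$. Axiom I(4) (commutativity) follows from commutativity of $+$ together with the evident symmetry of the three cases in the definition of $+'$. So the real content is I(1) (reversibility), I(3) (associativity), and IV (distributivity $a(b +' c) \subseteq ab +' ac$).

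For I(1), I would show $c \in a +' b \Rightarrow a \in c +' (-b)$ by a short case analysis on whether $a, b$ are zero and whether $b = -a$. If one of $a, b$ is zero this is just I(1) for $H$. If $a, b \ne 0$ and $b \ne -a$: from $c \in a + b \cup \{a, b\}$ one gets, using I(1) for $H$ in the first subcase and trivial membership facts (e.g. $a \in a + 0$, or $a \in c + (-b)$ when $c = b$) in the others, that $a \in c + (-b) \cup \{c, -b\}$; one must also note $c \ne 0$ is impossible here unless $c \in \{a,b\}$, and when $c \ne -(-b) = b$ this lands in the correct case of $+'$, while if $c = b$ then $a \in c +' (-b) = H$ trivially. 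If $b = -a$ then $a +' b = H \ni c$, and we must check $a \in c +' (-b) = c +' a$; since $a \ne 0$, if $c = 0$ this is $a \in a = c + a$... more carefully, $c +' a \supseteq \{c, a\} \ni a$ unless $a = -c$, and if $a = -c$ then $c +' a = H \ni a$. Distributivity IV is handled the same way: since $a \ne 0$ is needed for the nontrivial cases (if $a = 0$ both sides are $\{0\}$), multiplication by the unit $a$ permutes $H^*$ and fixes $0$, so it carries $b + c$ into $ab + ac$ (by IV for $H$), carries $\{b, c\}$ to $\{ab, ac\}$, carries $H$ to $H$, and respects the conditions "$c = -b$" $\Leftrightarrow$ "$ac = -(ab)$"; hence $a(b +' c) \subseteq ab +' ac$ in every case.

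The main obstacle is associativity, I(3): $(a +' b) +' c = a +' (b +' c)$. Here $+'$ must be extended to sets in the usual way, $S +' T = \bigcup_{s \in S, t \in T} s +' t$. The difficulty is the proliferation of cases, and in particular the interaction of the "$H$" clause with the rest: once any partial sum produces a set containing both some $x$ and $-x$ with $x \ne 0$, re-adding forces the whole computation to collapse to $H$, and one has to check that this collapse happens on both sides under exactly the same circumstances. I would organize the argument by first proving the helper fact that for $x \ne 0$, $x +' (-x) = H$ and more generally that $S +' T = H$ as soon as $S$ (or $T$) contains $x, -x$ for some $x\ne 0$, or as soon as $S$ and $T$ each contain a nonzero element with one being the negative of the other; then reduce to the case where all three of $a, b, c$ are nonzero and no degenerate cancellation occurs, where one expands $a +' b = (a+b) \cup \{a, b\}$ and re-adds $c$, using associativity of $+$ in $H$ and the defining property I(1) to match terms like $a + (b + c)$, $a + c$, $b + c$, $\{a, b, c\}$ on both sides. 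This is routine but lengthy; I expect the write-up to spend most of its space bookkeeping these cases, and the cleanest route is probably to isolate two or three lemmas about when $+'$-sums equal $H$ before grinding the generic case.
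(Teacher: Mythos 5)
Your treatment of axioms II, III, V (unchanged), I(2) and I(4) (immediate), and IV (scale by the unit $a$ and observe that each of the three clauses of $+'$ is preserved) is sound and parallels the paper. Your I(1) case analysis also works, modulo some garbled wording (you write ``$c \ne 0$ is impossible'' where you clearly mean $c = 0$ is impossible).

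The genuine gap is I(3). You correctly identify associativity as the crux, but you only sketch a plan and defer the verification as ``routine but lengthy.'' That deferral hides two problems. First, the helper lemma you propose as the organizing tool --- that once a partial sum contains some $x$ and $-x$ with $x \ne 0$, re-adding any $c$ collapses the result to $H$ --- is not correct as stated: if $c \notin \{0, x, -x\}$ then $(x +' c) \cup ((-x) +' c)$ is $(x+c) \cup (-x+c) \cup \{x,-x,c\}$, which need not be all of $H$. So the proposed reduction to a ``generic case'' does not cleanly isolate when either side equals $H$. Second, you set out to prove the two-sided equality $(a+'b)+'c = a+'(b+'c)$ directly, which multiplies the bookkeeping. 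The paper sidesteps both issues by invoking the standard observation (Lemma 1.3 of the cited multirings paper) that, given I(1) and commutativity, it suffices to prove the single inclusion $(a+'b)+'c \subseteq a+'(b+'c)$. This turns the problem into: given $x \in y +' c$ with $y \in a +' b$, produce $z \in b +' c$ with $x \in a +' z$. The paper then just names an explicit $z$ in each of the finitely many non-generic subcases (e.g.\ if $y = a$ take $z = c$ when $c \ne 0$, else $z = b$; if $y = -c$ take $z = -a$ when $a \ne 0$, else $z = x$; and so on). Without that reduction and without actually producing witnesses, your argument for I(3) is not a proof.
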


\begin{proof} We make use of the fact that $a+b \subseteq a+'b$. I (1) Suppose $c\in a+'b$. If $c\in a+b$ then $a\in c+(-b) \subseteq c+'(-b)$. Otherwise, $a,b\ne 0$ and $c=a$ or $c=b$ or $a=-b$. In each of these cases, $a \in c+'(-b)$ is clear. (2) Since $b+'0 = b+0$ this is clear. (3) As explained in \cite[Lemma 1.3]{M2}, it suffices to show $(a+'b)+'c \subseteq a+'(b+'c)$, i.e., if $x\in y+'c$ for some $y\in a+'b$ then $x\in a+'z$ for some $z\in b+'c$. If $x\in y+c$ and $y\in a+b$ this is clear. Otherwise either $y,c\ne 0$ and ($x=y$ or $x=c$ or $y=-c$) or $a,b\ne 0$ and ($y=a$ or $y=b$ or $a=-b$). In the first case, if $x=y$ take $z=b$ if $b\ne 0$ and $z=c$ if $b=0$; if $x=c$ take $z=c$; if $y=-c$ take $z=-a$ if $a\ne 0$ and $z=x$ if $a=0$. In the second case, if $y=a$ take $z=c$ if $c\ne 0$ and $z=b$ if $c=0$; if $y=b$ take $z=x$ if $x\ne 0$ and $z=-a$ if $x=0$; if $a=-b$ take $z=b$. (4) is clear. II, III and V are clear. IV Suppose $x\in b+'c$. If $x\in b+c$ then $ax \in a(b+c) \subseteq ab+ac \subseteq ab+'ac$. Otherwise, $b,c\ne 0$ and $x=b$ or $x=c$ or $b=-c$. In each of these cases $ax \in ab+'ac$ is clear.
\end{proof}

We refer to $H'$ as the \it prime \rm of the hyperfield $H$.
Observe that if $T$ is a subgroup of $H^*$ then $H'/_mT = (H/_mT)'$.

\section{Quadratic hyperfields and Witt equivalence}

Let $K$ be a field.
The \it quadratic hyperfield \rm of $K$, denoted $Q(K)$, is defined to be the prime of the hyperfield $K/_m K^{*2}$.\footnote{This is the same object referred to
in \cite[page 458]{M2}. Roughly speaking, it is the quadratic form scheme of $K$, terminology as in \cite{kss} or \cite{l}, with zero adjoined.} Note that $Q(K)^* = K^*/K^{*2}$.

\begin{prop}\label{properties} Assume  $\overline{a} \in Q(K)^*$. Then

(1) $\overline{a}^2=\overline{1}$.

(2) If $\overline{a} \ne -\overline{1}$ then $\overline{1}+\overline{a}$ is a subgroup of $Q(K)^*$.\footnote{ If $G = (G,-1,V)$ is an (abstract) quadratic form scheme, terminology as in \cite{kss}, then $H = (H, +,\cdot, -, 0,1)$,
where $H := G \cup \{ 0 \}$, \begin{linenomath}\[a+b := \begin{cases} a \text{ if } b=0 \\ b \text{ if } a = 0 \\ a\cdot V(ab) \text{ if } a,b\ne 0, b\ne -a \\ H \text{ if } a,b \ne 0, b=-a \end{cases},\]\end{linenomath} $a\cdot 0 = 0\cdot a := 0$ and $-a := (-1)\cdot a$, is hyperfield satisfying (1) and (2) of Proposition \ref{properties}, i.e., for all $a \in H^*$ (1) $a^2=1$ and (2) if $a\ne -1$ then $1+a$ is a subgroup of $H^*$. Conversely, every hyperfield $H$ satisfying (1) and (2) arises in this way, from some unique quadratic form scheme $G$. See \cite[Theorem 1.4]{kss} for some equivalent descriptions of quadratic form schemes. The question of whether every quadratic form scheme is realized as the quadratic form scheme of a field appears to be
still open.}

(3) If $K \ne \mathbb{F}_3, \mathbb{F}_5$ and $\operatorname{char}(K) \ne 2$ then $Q(K)=K/_m K^{*2}$.
\end{prop}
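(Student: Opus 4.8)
The plan is to verify the three statements in order, translating each into a concrete fact about sums of two squares in $K$. For (1), note that $\overline{a}^2 = \overline{a^2} = \overline{1}$ since $a^2 \in K^{*2}$; this is immediate from the definition of multiplication in $K/_m K^{*2}$.

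For (2), I would first unravel what $\overline{1}+\overline{a}$ means in $Q(K) = (K/_m K^{*2})'$. Since $\overline{a} \ne -\overline{1}$ (i.e.\ $-a \notin K^{*2}$) and $\overline{a}, \overline{1} \ne \overline{0}$, the prime addition gives $\overline{1} +' \overline{a} = (\overline{1}+\overline{a}) \cup \{\overline{1}, \overline{a}\}$, where $\overline{1}+\overline{a}$ is the ordinary hyperfield sum in $K/_m K^{*2}$. Thus $\overline{z} \in \overline{1}+\overline{a}$ in $Q(K)$ iff either $z \in K^{*2}$, or $z \equiv a \bmod K^{*2}$, or $z = s + a t$ for some $s,t \in K^{*2}$, i.e.\ iff $z$ is represented by the binary form $\langle 1,a\rangle$ over $K$ allowing one coordinate to vanish. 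Call this set $D\langle 1,a\rangle \subseteq K^*/K^{*2}$. I need to show $D\langle 1,a\rangle$ is a subgroup of $K^*/K^{*2}$. It contains $\overline{1}$ and is clearly closed under inverses (if $z = x^2 + ay^2$ then $\overline{z}^{-1} = \overline{z}\cdot\overline{z}^{-2}$, and $z/z^2 = (x/z)^2 + a(y/z)^2$). For closure under multiplication, the key identity is
\begin{linenomath}\[(x_1^2 + a y_1^2)(x_2^2 + a y_2^2) = (x_1 x_2 + a y_1 y_2)^2 + a(x_1 y_2 - x_2 y_1)^2,\]\end{linenomath}
which handles the "generic" case; the degenerate cases (one of the two factors actually lies in $\{\overline{1},\overline{a}\}$ rather than in the honest binary sum) are checked separately and are easy, using $\overline{a}^2 = \overline{1}$ and the fact that $a\cdot(x^2+ay^2) = (ay)^2 + a x^2 \cdot$ (wait—more carefully, $a(x^2+ay^2) = a x^2 + a^2 y^2 = (y\cdot 1)^2 + a x^2$ up to squares, staying in $D\langle 1,a\rangle$). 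I expect the bookkeeping of these degenerate cases to be the only mildly fussy part, but it is routine. This is of course the classical statement that the values of a binary Pfister-type form $\langle 1,a\rangle$ form a group; I would cite \cite{l} or \cite{M1} if a reference is preferred.

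For (3), I must show that when $\operatorname{char}(K)\ne 2$ and $K \ne \mathbb{F}_3,\mathbb{F}_5$, the prime addition on $K/_m K^{*2}$ adds nothing new, i.e.\ $Q(K) = K/_m K^{*2}$. Comparing the definitions, $+'$ differs from $+$ only in two ways: (a) for $a,b \ne 0$ with $b \ne -a$ it throws in the elements $\{\overline{a},\overline{b}\}$; (b) for $b = -a$ it replaces $\overline{a}+(-\overline{a})$ by all of $Q(K)$. So I need two things. First, for (a): whenever $\overline{b} \ne -\overline{a}$ in $K^*/K^{*2}$, already $\overline{a} \in \overline{a}+\overline{b}$ and $\overline{b}\in\overline{a}+\overline{b}$ in the unprimed hyperfield — equivalently, the equation $a = a x^2 + b y^2$ is solvable with $x,y \in K^*$, i.e.\ $b y^2 = a(1-x^2) = a(1-x)(1+x)$ has a solution with $x \ne \pm 1$, $y \ne 0$; dividing, this asks that $a(1-x)(1+x) \in b K^{*2}$ for some such $x$. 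Second, for (b): $\overline{a} + (-\overline{a})$ should already be all of $K^*/K^{*2}$ together with $\overline{0}$, i.e.\ every $z \in K^*$ is of the form $a x^2 - a y^2 = a(x^2 - y^2)$ with $x,y \in K^*$, equivalently every element of $K^*$ is $a$ times a nonzero difference of two nonzero squares. Both reduce to: the set $\{x^2 - y^2 : x,y\in K^*\} = \{(x-y)(x+y) : x \ne \pm y,\ xy \ne 0\}$, up to squares, is all of $K^*/K^{*2}$, and similarly that $1 - K^{*2}$ hits every square class once we exclude the finitely many bad small fields. The natural approach is a counting/pigeonhole argument: writing $u = x-y$, $v = x+y$ (a bijective linear change of variables since $\operatorname{char} \ne 2$), the product $uv$ ranges over all of $K^*$ as $u,v$ range over $K^*$ with $u \ne v$, $u \ne -v$, so $\{x^2-y^2\} = K^*$ as soon as $|K^*| \ge $ something small; one then checks the residual constraints ($x,y,x\pm y$ all nonzero) cost only finitely many elements, and that $\mathbb{F}_3,\mathbb{F}_5$ are exactly the fields where the relevant representation can fail.

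The main obstacle I anticipate is (3): isolating precisely why $\mathbb{F}_3$ and $\mathbb{F}_5$ (and no other field of odd characteristic) must be excluded, and making the pigeonhole count tight enough to cover all remaining fields — including, notably, $\mathbb{F}_2$'s absence from the exception list is handled by the $\operatorname{char}\ne 2$ hypothesis, but small finite fields like $\mathbb{F}_7, \mathbb{F}_9$ must be shown to already be fine. I would handle finite fields $\mathbb{F}_q$, $q$ odd, by the standard fact that every element of $\mathbb{F}_q$ is a sum of two squares and a direct check of when the "nonzero" constraints bite for $q \in \{3,5,7,9\}$, and handle infinite fields by the change-of-variables argument above, where the finitely many excluded pairs are harmless.
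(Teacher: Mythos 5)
Your proof of (1) coincides with the paper's, and your proof of (2) also matches the paper's in essence: both reduce closure under multiplication to the composition identity for the binary form $\langle 1,a\rangle$. Two small remarks on (2). First, to show $\overline{1}+\overline{a}$ is a subgroup of $Q(K)^*$ you must first rule out $\overline{0}\in\overline{1}+\overline{a}$; the paper does this cleanly via the hypergroup axiom ($\overline{0}\in\overline{1}+\overline{a}\Rightarrow\overline{a}\in\overline{0}+(-\overline{1})\Rightarrow\overline{a}=-\overline{1}$, contradiction). Second, the ``degenerate cases'' you flag never actually arise: the value set $D_K\langle 1,a\rangle=\{z\in K^*: z=x^2+ay^2,\ x,y\in K\}$ already contains $\overline{1}$ (take $y=0$) and $\overline{a}$ (take $x=0$), so the composition identity handles every pair of elements of $\overline{1}+\overline{a}$ uniformly, with no separate bookkeeping.

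For (3), the underlying reduction is the same as the paper's (everything comes down to showing $K^{*2}-K^{*2}=K$, via scaling), but the execution differs. Where you propose a pigeonhole count over the factorizations $b=uv$ with $u=x-y$, $v=x+y$, the paper just picks the specific factorization $(u,v)=(1,b)$, giving the explicit identity $b=\bigl(\tfrac{b+1}{2}\bigr)^2-\bigl(\tfrac{b-1}{2}\bigr)^2$, which already puts every $b\ne\pm1$ in $K^{*2}-K^{*2}$. The two leftover values $b=\pm1$ are reduced (by scaling) to showing $1\in K^{*2}-K^{*2}$, and this follows from the same identity applied to $b_0^2$ for any $b_0\in K^*$ with $b_0^2\ne\pm1$ --- such $b_0$ exists precisely because $|K^*|\ge 6$, which is where the exclusion of $\mathbb{F}_3,\mathbb{F}_5$ enters transparently. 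Your counting argument would also close the gap, but as written it is a plan rather than a proof (the bound ``$|K^*|\ge$ something small'' and the constraint-counting are not pinned down), whereas the paper's single identity avoids all of that. In short: correct ideas, same reduction, but the paper's constructive witness is both shorter and sharper than the counting approach you sketch.
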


\begin{proof} (1) $a \in K^*$ so $a^2\in K^{*2}$. It follows that $\overline{a}^2=\overline{a^2}=\overline{1}$. (2) If $\overline{0} \in \overline{1}+\overline{a}$, then $\overline{a} \in \overline{0}+(-\overline{1})$, so $\overline{a} = -\overline{1}$, which contradicts our assumption. This proves $\overline{1}+\overline{a} \subseteq K^*/K^{*2}$. Clearly $\overline{1} \in \overline{1}+\overline{a}$. Each $\overline{b} \in K^*/K^{*2}$ satisfies $\overline{b}\, \overline{b} = \overline{b^2} = \overline{1}$, so is its own inverse. Closure of $\overline{1}+\overline{a}$ under multiplication follows from the standard identity \begin{linenomath}\[(x_1^2+ay_1^2)(x_2^2+ay_2^2)= (x_1x_2-ay_1y_2)^2+a(x_1y_2+x_2y_1)^2.\]\end{linenomath}
(3) It suffices to show $\forall$ $b\in K$, $b \in aK^{*2}-aK^{*2}$. Scaling, we are reduced to the case $a=1$. If $b\ne \pm1$, the identity $b=(\frac{b+1}{2})^2-(\frac{b-1}{2})^2$ shows that $b \in K^{*2}-K^{*2}$. Thus we are reduced to showing $\pm 1 \in K^{*2}-K^{*2}$. Scaling, we are reduced further to showing $1\in K^{*2}-K^{*2}$. Since  $K \ne \mathbb{F}_3, \mathbb{F}_5$, and $\operatorname{char}K\ne 2$, $|K^*| \ge 6$, so there exists $b\in K^*$, $b^2 \ne \pm 1$. Then $b^2 = (\frac{b^2+1}{2})^2-(\frac{b^2-1}{2})^2$, so, dividing by $b^2$, $1 \in K^{*2}-K^{*2}$.
\end{proof}

The interest in $Q(K)$ stems from its connection to
symmetric bilinear forms over $K$. One is mainly interested in the characteristic $\ne 2$ case.
In this case, symmetric bilinear forms and quadratic forms are the same thing.

Denote by $W(K)$ the Witt ring of non-degenerate symmetric bilinear forms over $K$; see \cite{l}, \cite{M1} or \cite{w} for the definition in case $\operatorname{char}(K) \ne 2$ and \cite{KR}, \cite{krw} or \cite{mh} for the definition in the general case.

A (non-degenerate diagonal) \it binary form \rm over $K$ is just an ordered pair $\langle \overline{a},\overline{b}\rangle$, $\overline{a}, \overline{b} \in K^*/K^{*2}$. The \it value set \rm of such a form, denoted by  $D_K\langle \overline{a},\overline{b} \rangle$, is the set of non-zero elements of $\overline{a}+\overline{b}$, i.e., $D_K\langle \overline{a},\overline{b}\rangle$ is the image under $K^* \rightarrow K^*/K^{*2}$ of the subset $D_K\langle a,b\rangle$ of $K^*$ defined by \begin{linenomath}\[D_K\langle a,b\rangle := \begin{cases} K^* \text{ if } -ab \in K^{*2} \\ \{ z\in K^* : z = ax^2+by^2, x,y \in K\} \text{ otherwise}\end{cases}.\]\end{linenomath}
Two binary forms $\langle \overline{a},\overline{b}\rangle$ and $\langle \overline{c},\overline{d}\rangle$
are considered to be \it equivalent, \rm denoted $\langle \overline{a},\overline{b}\rangle \approx \langle \overline{c},\overline{d}\rangle$, if $\overline{c}\in D_K\langle \overline{a},\overline{b}\rangle$ and $\overline{a}\overline{b} = \overline{c}\overline{d}$.

In terms of generators and relations,
$W(K)$ is the integral group ring $\mathbb{Z}[K^*/K^{*2}]$ factored by the ideal generated by $[1]+[-1]$ and all elements \begin{linenomath}\[[a]+[b]-[c]-[d] \text{ such that } \overline{a},\overline{b},\overline{c},\overline{d} \in K^*/K^{*2}, \ \langle \overline{a},\overline{b}\rangle \approx \langle \overline{c},\overline{d}\rangle.\]\end{linenomath}
See \cite[Theorem 1.16 (iv) and Corollary 1.17]{krw} for the proof. Here, $[x]$ denotes the image of $\overline{x}$ under the canonical embedding $K^*/K^{*2} \hookrightarrow \mathbb{Z}[K^*/K^{*2}]$.

A hyperfield isomorphism $\alpha : Q(K) \rightarrow Q(L)$, where $K,L$ are fields, can be viewed as a group isomorphism $\alpha : K^*/K^{*2} \rightarrow L^*/L^{*2}$ such that $\alpha(-\overline{1}) = -\overline{1}$ and \begin{linenomath}\[\alpha(D_K\langle \overline{a},\overline{b}\rangle) = D_L\langle \alpha(\overline{a}),\alpha(\overline{b})\rangle \text{ for all } \overline{a},\overline{b} \in K^*/K^{*2},\]\end{linenomath} or, equivalently, as a group isomorphism  $\alpha : K^*/K^{*2} \rightarrow L^*/L^{*2}$ which induces a ring isomorphism between $W(K)$ and $W(L)$.
We say two fields $K$ and $L$ are \it Witt equivalent, \rm denoted $K\sim L$, to indicate that
$Q(K)$ and $Q(L)$ are isomorphic as hyperfields. For completeness and clarity we record the following:

\begin{prop}\label{harrison} $K\sim L$ iff $W(K)$ and  $W(L)$ are isomorphic as rings.
\end{prop}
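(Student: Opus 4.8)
The plan is to establish both implications by exhibiting, in each direction, the data that the other formulation requires, using the presentation of $W(K)$ by generators and relations recalled just above.

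First I would treat the forward implication. Suppose $K \sim L$, so we are given a hyperfield isomorphism $\alpha : Q(K) \to Q(L)$. By the remarks preceding the statement, $\alpha$ restricts to a group isomorphism $K^*/K^{*2} \to L^*/L^{*2}$ carrying $-\overline{1}$ to $-\overline{1}$ and satisfying $\alpha(D_K\langle \overline{a},\overline{b}\rangle) = D_L\langle \alpha(\overline{a}),\alpha(\overline{b})\rangle$ for all $\overline{a},\overline{b}$; equivalently $\alpha$ preserves the equivalence relation $\approx$ on binary forms. Extend $\alpha$ $\mathbb{Z}$-linearly to a ring isomorphism $\mathbb{Z}[K^*/K^{*2}] \to \mathbb{Z}[L^*/L^{*2}]$ between the integral group rings. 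Because $\alpha(-\overline{1}) = -\overline{1}$, this map sends the generator $[1]+[-1]$ of the defining ideal of $W(K)$ to $[1]+[-1]$, and because $\alpha$ preserves $\approx$ it sends each generator $[a]+[b]-[c]-[d]$ (with $\langle\overline{a},\overline{b}\rangle \approx \langle\overline{c},\overline{d}\rangle$) to a generator of the same form for $L$. Hence the ideal defining $W(K)$ is carried isomorphically onto the ideal defining $W(L)$, and $\alpha$ descends to a ring isomorphism $W(K) \to W(L)$.

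For the converse, suppose $\varphi : W(K) \to W(L)$ is a ring isomorphism. The key point is to recover the square-class group and the binary-form data intrinsically from the ring. One knows that the classes $[a]$, $\overline{a}\in K^*/K^{*2}$, are exactly the units $u$ of $W(K)$ satisfying $u^2 = 1$ that lie in the image of $K^*/K^{*2}$; more robustly, for $\operatorname{char}\ne 2$ these are characterized as the one-dimensional forms, and in general $[a]$ is distinguished among square-class-type elements by the structure of the fundamental ideal $I(K) = \ker(W(K)\to \mathbb{Z}/2)$ and the filtration it generates. Using such an intrinsic description, $\varphi$ induces a group isomorphism $K^*/K^{*2}\to L^*/L^{*2}$; it carries $[-1]$ to $[-1]$ since $[-1]$ is the unique such element with $[1]+[-1]=0$ in $W(K)$ (its additive inverse being itself). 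Finally, $\overline{c}\in D_K\langle\overline{a},\overline{b}\rangle$ with $\overline{a}\overline{b}=\overline{c}\overline{d}$ is equivalent to $[a]+[b]=[c]+[d]$ in $W(K)$, a purely ring-theoretic condition preserved by $\varphi$; hence the induced group isomorphism preserves the value sets $D_K$, which is exactly the condition for it to be a hyperfield isomorphism $Q(K)\to Q(L)$. Therefore $K\sim L$.

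The main obstacle is the converse direction, specifically the step of recovering $K^*/K^{*2}$ and the element $[-1]$ purely ring-theoretically and showing the identification is canonical — i.e., that a ring isomorphism necessarily respects the generators $[a]$ rather than merely some abstract group inside $W(K)$. In characteristic $\ne 2$ this is the classical Harrison criterion and I would simply cite it (e.g.\ \cite{l} or \cite{M1}); the slightly delicate part is handling characteristic $2$ uniformly, where one leans instead on the presentation from \cite{krw} recalled above together with the torsion/nilpotence structure of $W(K)$ to pin down the distinguished set of generators. Once that identification is in place, the preservation of the binary-form relations is immediate from the presentation, so no further computation is needed.
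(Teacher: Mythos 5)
Your overall plan matches the paper's: both the paper and you reduce the hard direction to recovering the square classes and value sets intrinsically from the ring, cite the classical Harrison criterion for $\operatorname{char}\ne 2$, and then deal separately with the general case. Your forward direction is fine and is essentially what one reads off from the presentation of $W(K)$ via $\mathbb{Z}[K^*/K^{*2}]$.

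The gap is in the converse, and you have half-noticed it yourself. Your first proposed characterization of the square classes --- ``the units $u$ with $u^2=1$ that lie in the image of $K^*/K^{*2}$'' --- is circular, since ``image of $K^*/K^{*2}$'' is precisely what a ring isomorphism is not known a priori to respect. Your fallback (``one-dimensional forms'' for $\operatorname{char}\ne 2$, ``the filtration generated by $I$'' plus ``torsion/nilpotence structure'' in general) points in the right direction but never pins down the one fact that makes the intrinsic recovery work: the Krull intersection property $\bigcap_n I(K)^n = 0$. In characteristic $\ne 2$ this is the Arason--Pfister Hauptsatz \cite{ap}, and the paper's proof explicitly flags that the essential input for the general case is the remark in \cite{bm} that this Hauptsatz holds in \emph{all} characteristics, after which the argument of \cite[Proposition~4.6]{M1} applies. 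Citing \cite{krw} gives the presentation of $W(K)$ by generators and relations, but a presentation alone does not tell you that a ring isomorphism must carry the distinguished generating set to the distinguished generating set; without $\bigcap I^n = 0$ your ``pin down the distinguished set of generators'' step is unsupported, and that is exactly the step the whole converse hinges on. So the route is the same as the paper's, but your write-up leaves the key lemma unnamed and unproved.
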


\begin{proof} See \cite{h} for the characteristic $\ne 2$ case. As remarked in \cite{bm}, the %Arason-Pfister
Hauptsatz in \cite{ap} holds for all characteristics. The general case follows from this fact; see \cite[Proposition 4.6]{M1}.
\end{proof}

For fields %$K$, $L$
of characteristic $\ne 2$, Witt equivalence is also characterized in terms of Galois groups; see \cite[Theorem 3.8]{ms}.

It is well-known that the Witt ring of a field $K$ encodes the theory of symmetric bilinear forms over $K$. Witt equivalent fields can be understood as fields having the same symmetric bilinear form theory. The quadratic hyperfield $Q(K)$ encodes exactly the same information as the Witt ring $W(K)$.
At the same time, it is a much simpler and easier object to deal with.

Hyperfields provide a first-order axiomatization of the algebraic theory of quadratic forms. Although other first-order descriptions have been already known for some time (see \cite{dm2000} and \cite{M-etc}), it seems that the theory of hyperfields is the most natural and the most easily understood. All the results presented in what follows can be "translated" to the traditional notion of Witt rings, and, as of today, the authors are not familiar with any results in the algebraic theory of quadratic forms that can be proven with the use of hyperfields, but can not be proven without them. Still, the authors believe that hyperfields make the exposition easier to read and to understand.

\section{Quadratic hyperfields and valuations}

Let $H_1, H_2$ be hyperfields. Each morphism $\iota: H_1 \rightarrow H_2$ induces a morphism $\overline{\iota} : H_1/_m \Delta \rightarrow H_2$ where $\Delta: = \{ x\in H_1^* : \iota(x)=1\}$. The morphism $\iota$ is said to be a \it quotient morphism \rm if $\overline{\iota}$ is an isomorphism, equivalently, if $\iota$ is surjective, and $\iota(c) \in \iota(a)+\iota(b)$ iff $cs\in at+bu$ for some $s,t,u \in \Delta$.  A morphism $\iota : H_1 \rightarrow H_2$ is said to be a \it group extension \rm if $\iota$ is injective,
every $x\in H_2^* \backslash \iota(H_1^*)$ is \it rigid \rm in the sense that $1+x \subseteq \{ 1,x\}$,\footnote{We are interested here in the case where the groups $H_1^*, H_2^*$ have exponent $2$. In this situation, $1+x \subseteq \{ 1,x\}$ $\Leftrightarrow$ $1+x = \{1,x\}$.} and
 $y \in H_1$, $y\ne -1$ $\Rightarrow$ $\iota(1+y)=1+\iota(y)$.

We assume now that $K$ is a field. For a valuation $v$ on $K$, $\Gamma_v$ denotes the value group, $A_v$ denotes the valuation ring, $M_v$ the maximal ideal, $U_v$ the unit group, and $K_v$ the residue field. $\pi = \pi_v: A_v \rightarrow K_v$ denotes the canonical homomorphism, i.e., $\pi(a) = a+M_v$. We say $v$ is \it discrete rank one \rm if $\Gamma_v = \mathbb{Z}$. See \cite{e}, \cite{ep}, \cite{r} for background material on valuations.

We will be interested in the subgroup $T= (1+M_v)K^{*2}$ of $K^*$.

\begin{prop} \label{small detail} Suppose $v$ is non-trivial and $T = (1+M_v)K^{*2}$. Then:

(1)  $T\cup xT \subseteq T+xT$ for all $x\in K^*$;

(2) $T-T=K$;

(3) The map $Q(K) \rightarrow K/_mT$ defined by $\overline{x} \mapsto xT$  is a quotient morphism.
\end{prop}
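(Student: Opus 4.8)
The plan is to verify the three assertions directly from the definitions of $T = (1+M_v)K^{*2}$ and the hyperfield operations, using the non-triviality of $v$ to produce elements of $M_v$ and of $A_v \setminus M_v$ with prescribed valuations.

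\textbf{Part (1).} I would first reduce the claim $T \cup xT \subseteq T + xT$ to showing $1 \in T + xT$ for all $x \in K^*$, since $T \subseteq T + xT$ trivially (as $0 \in xT$ would be false, but rather $T = T + 0 \subseteq$ \dots — more carefully, $1 \in T+xT$ gives $T = 1\cdot T \subseteq (T+xT)\cdot T \subseteq T+xT$ and likewise $xT \subseteq xT + x^2T = x(T+xT)\cdot\text{(unit)}$; the cleanest route is to prove $1 \in T + xT$ and $x \in T + xT$, the latter being symmetric). To get $1 \in T + xT$: if $v(x) > 0$ then $x \in M_v$, so $1 = (1-x) + x$ with $1 - x \in 1 + M_v \subseteq T$ and $x \in xT$. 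If $v(x) < 0$ then $v(1/x) > 0$, so $1/x \in M_v$ and $1 = (1 - 1/x) + 1/x$; multiplying by $x$, $x = x(1-1/x) + 1$, giving $x \in T + xT$, and by symmetry of $+$ this handles the needed containment after relabelling. If $v(x) = 0$ and $\bar x \ne -\bar 1$ in $K_v$, then $1 + x$ is a unit and $1 = \frac{1}{1+x} + \frac{x}{1+x}$ with $\frac{1}{1+x} \in 1 + M_v$ after scaling by a square — actually $\frac{1}{1+x}$ is a unit congruent to $\frac{1}{1+x}$ mod $M_v$, so I need a square correction: write $1 = u + ux$ where $u = \frac{1}{1+x}$; then $u \in U_v$ and I want $u \in T$, i.e. $u \equiv$ square mod $(1+M_v)$. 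This forces a case analysis on the residue field, which is where the argument gets delicate. The honest approach for $v(x) = 0$: since $v$ is non-trivial, pick $t \in M_v \setminus\{0\}$ and write $1 = (1 + xt^2)/(1) \cdot$\dots — better, use $1 \in (1+M_v) + x(1+M_v)$ directly by solving $1 = (1 + xs^2) \cdot r$; I will instead observe that it suffices to find $s, s' \in K^*$ with $1 = s^2(1+m) + x s'^2 (1+m')$ for some $m, m' \in M_v$, and exhibit such by choosing $s' \in M_v$ very small so that $xs'^2 \in M_v$, then $s^2(1+m) = 1 - xs'^2 \in 1 + M_v$, which is solved by $s = 1$. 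So in fact $1 = (1 - xt^2) + xt^2$ works for any $t \in M_v\setminus\{0\}$: $1 - xt^2 \in 1 + M_v \subseteq T$ and $xt^2 \in xK^{*2} \subseteq xT$. This single identity covers all cases $x \in K^*$ and is the key observation; likewise $x = (x - x^2t^2) + x^2 t^2 = x(1 - xt^2) + (xt)^2 \in T + xT$.

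\textbf{Part (2).} From (1), $T \cup xT \subseteq T + xT \subseteq T - T$ for every $x \in K^*$ (since $-1 \in T$, as $-1 = (-1)(1) \in K^{*2}\cdot(1+M_v)$? no — but $xT = x(1+M_v)K^{*2}$ and $-T = -T$, and $T + xT = T - (-x)T$, so ranging $x$ over $K^*$ gives all of $\bigcup_x xT$ inside $T - T$). Since the cosets $xT$ for $x \in K^*$ together with $\{0\}$ cover $K$, we get $K \subseteq T - T$, hence $T - T = K$.

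\textbf{Part (3).} Let $\phi : Q(K) \to K/_m T$ be $\bar x \mapsto xT$. This is well-defined and a morphism because $K^{*2} \subseteq T$, so $\phi$ factors the canonical map $K \to K/_m T$ through $K/_m K^{*2}$, and it respects prime addition since $\overline{1} + \overline{x}$ in $Q(K)$ maps into $T + xT$ which contains $\{T, xT\}$ by (1) — precisely the extra elements prime addition introduces. Surjectivity is immediate. For the quotient-morphism condition I must check: $\phi(\bar z) \in \phi(\bar a) + \phi(\bar b)$ in $K/_m T$ iff $zs \in at + bu$ for some $s,t,u \in T$; but that is exactly the definition of addition in $K/_m T$ unwound through $\phi$, so the only real content is that $\phi$ reflects the relation, i.e. $zT \in aT + bT$ implies $\overline{z} \in \overline{a} +' \overline{b}$ in $Q(K)$. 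Dividing by $a$ reduces to: $\frac{z}{a}T \in T + \frac{b}{a}T$ implies $\overline{z/a} \in \overline{1} +' \overline{b/a}$; writing $zs = t + bu$ with $s,t,u \in T$, and recalling $T = (1+M_v)K^{*2}$, one analyzes whether $t$ and $bu$ are ``comparable'' — if $v(bu) > v(t)$ or $v(bu) < v(t)$, the sum is a square times a unit congruent to the dominant term, forcing $\overline{z/a} = \overline{1}$ or $\overline{b/a}$; if $v(bu) = v(t)$, then after scaling $zs/t = 1 + (bu/t)$ with $bu/t$ a unit, and writing out squares gives $\overline{z} \in \overline{1} + \overline{b}$ in $K/_m K^{*2} \subseteq$ the prime. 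I expect \textbf{this last reflection step in (3) to be the main obstacle}: separating the equal-valuation case from the unequal-valuation case cleanly, and in the unequal case correctly tracking that an element of the form (unit of valuation $0$) $+$ (element of strictly positive valuation) lies in $(1+M_v)K^{*2}$ only up to a square coming from the dominant term. The ultrametric inequality for valuations is what makes each case go through, but packaging it so that it exactly matches the definition of prime addition requires care.
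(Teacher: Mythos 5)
Your part (1) is essentially the paper's argument: find a square large enough to dominate $x$ so that the relevant perturbation lands in $1+M_v$. There is, however, a slip at the end: the identity $1 = (1-xt^2)+xt^2$ does \emph{not} work for arbitrary $t \in M_v\setminus\{0\}$; you need $v(xt^2) > 0$, i.e.\ $2v(t) > -v(x)$. Your earlier phrasing (``choose $s'$ very small so that $xs'^2 \in M_v$'') has it right, but the concluding sentence is wrong as stated --- and in a non-archimedean value group a fixed $t \in M_v$ genuinely can fail to dominate a sufficiently negative $v(x)$, so this is not cosmetic. (The paper takes $p$ with $v(p^2) > \max\{v(x),-v(x)\}$, which is always possible because $\Gamma_v$, being a nontrivial ordered abelian group, is unbounded above.)

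Part (2) has a genuine gap. The asserted chain $T\cup xT \subseteq T+xT \subseteq T-T$ is false at the second inclusion: $T+xT \subseteq T-T$ would require $xT \subseteq -T$, i.e.\ $-x \in T$, which fails for general $x$. The reparametrization $T+xT = T-(-x)T$ is correct but does not help, since $T-(-x)T \neq T-T$ unless $-x \in T$, and you yourself observe that $-1 \in T$ is not available. The paper proves (2) directly by the same device as (1): given $y \in K$, pick $p$ with $v(p^2) < v(y)$; then $y = p^2(1+y/p^2) - p^2 \in T-T$. If you insist on deriving (2) from (1), the correct route is: for $y \in K^*$, (1) gives $1 \in T+yT$, say $1 = t_1 + yt_2$ with $t_1,t_2 \in T$, whence $y = (1-t_1)/t_2 \in T-T$ because $T-T$ is stable under multiplication by $T$.

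Part (3) misidentifies where the content lies. The paper's proof is one line: (1) and (2) say precisely that $K/_m T$ coincides with its own prime $(K/_m T)'$ (the extra elements $\{\overline{a},\overline{b}\}$ that $+'$ adjoins are already present by (1), and $\overline{a}+(-\overline{a}) = K/_mT$ already holds by (2)); combined with the identity $H'/_m\Delta = (H/_m\Delta)'$ noted at the end of Section~2, the map $Q(K) = (K/_mK^{*2})' \to (K/_mT)' = K/_mT$ is a quotient morphism by construction. The ``reflection'' step you flag as the main obstacle is in fact trivial and does not need any valuation-theoretic case split: if $cT \in aT+bT$ in $K/_mT$, then by definition $cs' = at'+bu'$ in $K$ for some $s',t',u' \in T$, and that single field identity already yields $\overline{cs'} \in \overline{at'} + \overline{bu'}$ in $K/_mK^{*2}$, hence in $Q(K)$ under $+'$. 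The work that actually uses (1) and (2) is the \emph{forward} direction on the part of $+'$ not covered by $+$, namely that $\{aT,bT\} \subseteq aT+bT$ (from (1)) and that $aT+(-a)T = K/_mT$ (from (2)); you cite only the first of these. Your proposal as written would not compile into a complete proof of (3) without these corrections, and the dominant-valuation analysis you sketch, beyond being unnecessary, also conflates the addition of $K/_mK^{*2}$ with the prime addition $+'$ of $Q(K)$ in the reflection condition.
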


\begin{proof} (1) Pick $p \in K^*$ so that $v(p^2)> \max\{ v(x), -v(x)\}$. Since we are assuming $v$ is non-trivial this is always possible. Then $t = 1+p^2x \in T$, so $1 = \frac{1}{t}(1+p^2x) \in T+xT$, and, similarly, $p^2+x = x(1+\frac{p^2}{x})\in xT$, so $x\in T+xT$. (2) Suppose $y\in K$. Pick $p \in K^*$ so that $v(p^2)< v(y)$. Then $y = p^2(1+ \frac{y}{p^2})-p^2 \in T-T$. (3) In view of (1) and (2), $K/_mT = (K/_mT)'$, so this is clear.
\end{proof}

Propositions \ref{group extension} and \ref{completion} below are variants of old results of Springer \cite{sp1}, \cite{sp2} couched in the language of quadratic hyperfields.
Consider the canonical group isomorphism $\alpha : U_vK^{*2}/(1+M_v)K^{*2} \rightarrow K_v^*/K_v^{*2}$ induced by $x \in U_v \mapsto \pi(x) \in K_v^*$. Define $\iota : Q(K_v) \rightarrow K/_m T$ by $\iota(0) = 0$ and $\iota(a) = \alpha^{-1}(a)$ for $a \in K_v^*/K_v^{*2}$.

\begin{prop} \label{group extension} Suppose $v$ is non-trivial and $T = (1+M_v)K^{*2}$. Then:

(1) $\iota$ is a morphism;

(2) $\iota$ is a group extension.
\end{prop}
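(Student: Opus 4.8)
The plan is to verify the two conditions (1) and (2) more or less directly, using the structure of the subgroup $T = (1+M_v)K^{*2}$ together with the Springer-type decomposition that is implicit in the map $\iota$. For (1), I would check the three defining properties of a morphism of hyperfields: multiplicativity, compatibility with $-$, and $\iota(a+b) \subseteq \iota(a) + \iota(b)$, plus the trivial conditions $\iota(0)=0$, $\iota(1)=1$. Multiplicativity and the conditions on $0,1,-$ are immediate from the fact that $\alpha$ is a group isomorphism and $-1$ lifts to $-1$. The content is the additive containment. It suffices, by the definition of prime addition in $Q(K_v)$, to treat $a,b \in K_v^*/K_v^{*2}$ with $b \ne -a$ and to show that if $\overline{c} \in \overline{a}+\overline{b}$ in $Q(K_v)$ then $\iota(\overline c) \in \iota(\overline a)+\iota(\overline b)$ in $K/_mT$; the case $b=-a$ gives $\iota(a)+\iota(b) = $ everything on the right (or is handled by Proposition \ref{small detail}(1), which shows $T \cup xT \subseteq T+xT$), and the cases where one of $a,b$ is $0$ are trivial. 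So suppose $c_0 = a_0 x^2 + b_0 y^2$ with $a_0,b_0,c_0 \in U_v$ lifting $\overline a,\overline b,\overline c$ and $x,y \in K_v$; lift $x,y$ to elements $\tilde x,\tilde y$ of $A_v$. Then $a_0\tilde x^2 + b_0\tilde y^2$ is a unit (its residue is $c_0 \ne 0$) congruent to $c_0$ mod $M_v$, hence lies in $c_0(1+M_v) \subseteq \iota(\overline c)T = $ the class $\iota(\overline c)$, and it visibly lies in $\iota(\overline a)T + \iota(\overline b)T$, i.e. in $\iota(\overline a)+\iota(\overline b)$ computed in $K/_mT$. The one subtlety is the case $c_0 \in \overline a + \overline b$ because $-a_0b_0$ is a square in $K_v$, i.e. $D_{K_v}\langle \overline a,\overline b\rangle = K_v^*$: here one must produce every unit class, and again a direct lifting argument, combined with Proposition \ref{small detail}(1), does it, since $-a_0 b_0$ being a square in $K_v$ forces $-\iota(\overline a)\iota(\overline b)$ to be a square times a $1+M_v$ element in $K$.

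For (2), recall the definition of group extension: $\iota$ injective; every $x \in (K/_mT)^* \setminus \iota(Q(K_v)^*)$ is rigid, meaning $1+x \subseteq \{1,x\}$; and for $y \in Q(K_v)$ with $y \ne -1$ one has $\iota(1+y) = 1+\iota(y)$. Injectivity of $\iota$ is just injectivity of $\alpha^{-1}$, which holds because $\alpha$ is an isomorphism. The group $(K/_mT)^* = K^*/T$ sits in the exact sequence $1 \to U_vK^{*2}/T \to K^*/T \to \Gamma_v/2\Gamma_v \to 1$, and $\iota$ identifies $Q(K_v)^* = K_v^*/K_v^{*2}$ with the subgroup $U_vK^{*2}/T$. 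So an element $x \in K^*/T$ outside $\iota(Q(K_v)^*)$ is represented by some $z \in K^*$ with $v(z)$ odd; I must show such $z$ is rigid, i.e. $T + zT \subseteq T \cup zT$. Given $t_1 + zt_2$ with $t_i \in T$, compare valuations: since $v(z)$ is odd and $v(t_i)$ is even, $v(t_1) \ne v(zt_2)$, so $v(t_1 + zt_2) = \min\{v(t_1), v(zt_2)\}$ and the sum is, up to a $1+M_v$ factor and a square, equal to whichever of $t_1$, $zt_2$ has smaller value — hence lies in $T$ or in $zT$. This is the standard Springer argument and is where the real work (such as it is) lives. Finally, the identity $\iota(1+y) = 1+\iota(y)$ for $y \ne -1$: by (1) we have $\subseteq$, namely $\iota(1+y) \subseteq 1+\iota(y)$; for the reverse, an element of $1 + \iota(y)$ in $K/_mT$ that is nonzero is represented by $1 + zu$ with $z \in T$, $u$ a unit lifting $y$... more carefully, $1+\iota(y)$ in $K/_mT$ is the set of classes of nonzero elements of $s + yu$ over $s \in T$, $u \in $ the lift; reducing mod $M_v$ such an element has residue in $1 + \pi(y)$ in $K_v$ (here using that these elements are units, which holds because $y \ne -1$ keeps us out of the "equals everything" case and the representatives are $v$-units), so its class is $\iota$ of something in $1+y$. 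Putting $\subseteq$ and $\supseteq$ together gives equality.

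I expect the main obstacle to be bookkeeping in case (1), specifically the case $-a_0b_0 \in K_v^{*2}$ where the value set in the residue field is all of $K_v^*$ but the value set $D_K\langle \iota(\overline a),\iota(\overline b)\rangle$ need not be all of $K^*$ — one has to check that nonetheless $1+\iota(y) \subseteq \iota(1+y)$ does not over-claim, and correspondingly that every residue class really is hit. This is resolved by invoking Proposition \ref{small detail}(1): $T \cup zT \subseteq T+zT$ ensures that $K/_mT$ "adds back in" the endpoints, which is exactly what the prime construction of $Q(K_v)$ contributes on the other side; so the containment $\iota(a+b) \subseteq \iota(a)+\iota(b)$ never fails. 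A second, milder obstacle is making sure that in verifying the rigidity condition in (2) one correctly ranges over all of $(K/_mT)^* \setminus \iota(Q(K_v)^*)$ — i.e. that this set is exactly the classes of odd-value elements — which follows from $\alpha$ being surjective onto $K_v^*/K_v^{*2}$, hence $\iota(Q(K_v)^*) = U_vK^{*2}/T$ exactly. Once these are in hand the argument is a routine case analysis on valuations.
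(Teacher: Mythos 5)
Your overall plan mirrors the paper's: check the morphism axioms for $\iota$ directly, reducing the additive containment via scaling to showing $\iota(1+a)\subseteq 1+\iota(a)$ and verifying it by lifting to $A_v$ and reducing mod $M_v$; then verify injectivity, rigidity via a valuation comparison, and the reverse inclusion $1+\iota(y)\subseteq\iota(1+y)$ by normalizing to units. That is the paper's route. However there are two places where the write-up as given does not actually close the argument.

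First, in part (1) the case $\overline b=-\overline a$ (equivalently $-a_0b_0\in K_v^{*2}$) is mis-handled. In $Q(K_v)$ this is precisely the case where $\overline a+'\overline b=Q(K_v)$, so on the target side you must show that $\iota(\overline a)+\iota(\overline b)$ is \emph{all} of $K/_mT$. You attribute this to Proposition \ref{small detail}(1) ($T\cup xT\subseteq T+xT$), but that statement only recovers the two endpoint classes $\{1,x\}$ that the prime operation appends; what is actually needed here is Proposition \ref{small detail}(2), $T-T=K$, which is what makes $\iota(\overline a)-\iota(\overline a)$ exhaust $K/_mT$. Part (1) handles the endpoints $\overline a,\overline b\in\overline a+'\overline b$ (i.e.\ the lifts where one of your $\tilde x,\tilde y$ vanishes); part (2) handles $\overline b=-\overline a$. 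As written you use the wrong half and never invoke (2).

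Second, and more substantively, in the group-extension identity $\iota(1+y)=1+\iota(y)$ you assert that the representatives of classes in $1+\iota(y)$ are $v$-units "because $y\ne -1$." This is not true as stated: after normalizing by $T$, a representative has the form $s+u$ with $s\in T$ and $u\in U_v$ fixed, and if $v(s)<0$ this is not a unit. The correct statement, which the paper proves, is that $v(s+u)=\min\{v(s),v(u)\}$, and it is \emph{this} that uses $y\ne -1$ (otherwise one could have cancellation when $v(s)=0$ and $\pi(s)=-\pi(u)$). Once $v(s+u)=\min\{v(s),v(u)\}$ is established, one divides by $s$ or by $1$ according to which of $v(s),v(u)$ is smaller to obtain a unit $y'\in T(s+u)\cap U_v$, and only then can one reduce mod $M_v$ and read off that $\pi(y')\in K_v^{*2}+\pi(u)K_v^{*2}$, $K_v^{*2}$, or $\pi(u)K_v^{*2}$. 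Your proof omits both the no-cancellation step and the normalization, and instead asserts the conclusion of that analysis. These are exactly the "real work" in part (2), so the gap is genuine rather than cosmetic; the surrounding skeleton (injectivity from $\alpha$ being an isomorphism, rigidity from comparing $v(t_1)$ and $v(t_2x)$, $\subseteq$ from part (1)) is correct and matches the paper.
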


Note: The cokernel of the group embedding $\alpha^{-1} : K_v^*/K_v^{*2} \rightarrow K^*/T$ is equal to $K^*/U_vK^{*2} \cong \Gamma_v/2\Gamma_v$. For this reason we sometimes say that $K/_mT$ is a \it group extension of $Q(K_v)$ by the group \rm $\Gamma_v/2\Gamma_v$.

\begin{proof} (1) $\iota(ab) = \iota(a)\iota(b)$, $\iota(-a)= -\iota(a)$, $\iota(0)=0$ and $\iota(1)=1$ are clear. It remains to show $\iota(a+b) \subseteq \iota(a)+\iota(b)$. This is clear if one of $a,b$ is zero, so we can assume $a,b \ne 0$. Scaling, we are reduced to showing $\iota(1+a) \subseteq 1+\iota(a)$ for all $a \in K_v^*/K_v^{*2}$. Represent $a$ by an element $\pi(x)$, $x\in U_v$. Suppose $\pi(y) = \pi(p)^2+\pi(q)^2\pi(x)$, $p,q \in U_v$, $y \in A_v$.
Then $y= p^2+q^2x+z$, $v(z)>0$, so $y = p^2(1+\frac{z}{p^2})+q^2x \in T+Tx$. In view of parts (1) and (2) of Proposition \ref{small detail}, this proves (1).

(2) Clearly $\iota$ is injective. Suppose $y= t_1+t_2x$, $t_1,t_2 \in T$, $x\notin U_vK^{*2}$. Then $v(t_1) \ne v(t_2x)$. If $v(t_1) < v(t_2x)$, then $y = t_1(1+\frac{t_2x}{t_1}) \in T$. If $v(t_1) > v(t_2x)$, then $y = t_2x(1+\frac{t_1}{t_2x}) \in Tx$. This proves the rigidity assertion. Suppose now that $y = t_1+t_2x$, $t_1,t_2 \in T$, $x\in U_v$, $\pi(x) \notin -K_v^{*2}$. We want to show $\exists$ $y' \in Ty\cap U_v$ such that $\pi(y') \in K_v^{*2} +K_v^{*2}\pi(x)$ or $\pi(y') \in K_v^{*2}$ or $\pi(y') \in K_v^{*2}\pi(x)$. If $v(y) > \min \{ v(t_1), v(t_2x)\}$ then $x \in -T$, which contradicts $\pi(x) \notin -K_v^{*2}$. Thus $v(y) = \min \{ v(t_1), v(t_2x)\}$.  If $v(t_1)\le v(t_2)$ take $y' = \frac{y}{t_1}$. If $v(t_1)> v(t_2)$ take $y' = \frac{y}{t_2}$.
\end{proof}

\begin{prop} \label{completion} Suppose $v$ is non-trivial, $\operatorname{char}(K_v) \ne 2$,  and $T = (1+M_v)K^{*2}$. Then $K/_m T$ is naturally identified with $Q(\tilde{K}_v)$, where $\tilde{K}_v$ denotes the henselization of $(K,v)$.
\end{prop}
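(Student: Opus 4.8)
The plan is to show that $\iota : Q(K_v) \to K/_m T$ from Proposition \ref{group extension} ``extends'' along the henselization. Write $\tilde K = \tilde K_v$ for the henselization of $(K,v)$, let $\tilde v$ be the unique extension of $v$ to $\tilde K$, and recall the standard facts that $\tilde v$ has the same value group $\Gamma_v$ and the same residue field $K_v$ as $v$, and that $\tilde K / K$ is an immediate extension. Set $\tilde T := (1+M_{\tilde v})\tilde K^{*2}$. I would first observe that there is a natural morphism $j : K/_m T \to \tilde K/_m \tilde T$ induced by the inclusion $K \hookrightarrow \tilde K$: one checks $T \subseteq \tilde T \cap K^*$ using $M_v \subseteq M_{\tilde v}$, so $x \mapsto x\tilde T$ is well defined on cosets, and it is a morphism of hyperfields since addition in both quotients is controlled by Proposition \ref{small detail} (both equal their own primes as $\tilde v$ is non-trivial). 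By Proposition \ref{completion} applied formally, or rather by direct inspection, the target $\tilde K/_m \tilde T$ is what we want to identify with $Q(\tilde K)$, so the crux is to prove $j$ is an isomorphism and that $\tilde K/_m \tilde T = Q(\tilde K)$.

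Next I would dispatch the identification $\tilde K/_m \tilde T = Q(\tilde K)$. Here is where $\operatorname{char}(K_v)\neq 2$ enters: since $\tilde K$ is henselian and its residue characteristic is not $2$, Hensel's lemma gives $1+M_{\tilde v} \subseteq \tilde K^{*2}$ (a unit congruent to $1$ mod $M_{\tilde v}$ is a square because $X^2 - u$ has a simple root mod $M_{\tilde v}$). Therefore $\tilde T = (1+M_{\tilde v})\tilde K^{*2} = \tilde K^{*2}$, so $\tilde K/_m \tilde T = \tilde K/_m \tilde K^{*2}$. Combining with the remark that $(K/_mS)' = K'/_m S$ and that for non-trivial $\tilde v$ Proposition \ref{small detail}(1),(2) force $\tilde K/_m \tilde K^{*2}$ to coincide with its prime (equivalently, appeal to Proposition \ref{properties}(3): as $K_v$ is infinite or at least $\tilde K$ is, $\tilde K \neq \mathbb F_3,\mathbb F_5$ and $\operatorname{char}\tilde K \neq 2$ is automatic once $\operatorname{char} K_v \neq 2$ forces $\operatorname{char} K \neq 2$), we get $\tilde K/_m \tilde T = Q(\tilde K)$.

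Then I would prove $j$ is an isomorphism of hyperfields. Surjectivity: every element of $\tilde K^*/\tilde T = \tilde K^*/\tilde K^{*2}$ is represented by some $\tilde x \in \tilde K^*$; writing $\tilde v(\tilde x) = v(a)$ for some $a \in K^*$ (same value group) and noting $\tilde x / a \in U_{\tilde v}$ has residue in $K_v^* = \overline{U_v}$, we can multiply by a suitable unit of $K$ and a square of $\tilde K$ to land in $K^* \tilde K^{*2}$; so $\tilde K^* = K^* \tilde T$ and $j$ is onto. Injectivity: if $x \in K^*$ lies in $\tilde T = \tilde K^{*2}$, then $x$ is a square in $\tilde K$; but $K$ is relatively algebraically closed in $\tilde K$ for the quadratic sub-extension —more precisely, $\sqrt x \in \tilde K$ together with $\tilde K/K$ being a directed union of separable sub-extensions and $K$ separably closed in $\tilde K$ in the relevant sense forces $\sqrt x \in K$, hence $x \in K^{*2} \subseteq T$; so $j$ is injective on units, and it visibly respects $0$. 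Finally, $j$ is a hyperfield morphism in both directions: $j$ preserves sums because for $a,b,c\in K^*$, $c \in a+b$ in $K/_mT$ iff $cs = at+bu$ with $s,t,u \in T$, which trivially gives the same relation in $\tilde K/_m\tilde T$; conversely if $c\tilde s = a\tilde t + b\tilde u$ with $\tilde s,\tilde t,\tilde u\in\tilde T=\tilde K^{*2}$ and $a,b,c\in K^*$, Springer-type descent (equivalently Proposition \ref{group extension}(1) applied to $\tilde v$, or: representability of binary forms over the henselization descends, using that the residue field is the same) shows the same binary-form value relation holds over $K/_mT$.

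The main obstacle I expect is the injectivity/descent step — showing that a square in the henselization that lies in $K^*$ is already a square in $K^*$, and more generally that the value relations in $\tilde K/_m\tilde T$ descend to $K/_mT$. The cleanest route is probably not to argue with relative algebraic closure directly but to exhibit an explicit inverse morphism $\tilde K/_m\tilde T \to K/_mT$: compose the iso $\tilde K/_m\tilde T = Q(\tilde K)$ with the Springer data — every class in $Q(\tilde K)$ is determined by its $\tilde v$-value in $\Gamma_v/2\Gamma_v$ together with a residue class in $K_v^*/K_v^{*2}$, and those two pieces are exactly $\Gamma_v/2\Gamma_v$ and $K_v^*/K_v^{*2}$ for $v$ as well; matching them reconstructs a class in $K^*/T$ via Proposition \ref{group extension}, and one checks this is inverse to $j$ and a morphism using Proposition \ref{group extension}(1),(2) for $v$. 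That reduces everything to bookkeeping already packaged in Propositions \ref{small detail} and \ref{group extension}, with $\operatorname{char}(K_v)\neq 2$ used only to kill $1+M_{\tilde v}$ modulo squares.
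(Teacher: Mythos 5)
Your overall strategy is the same as the paper's: use henselianity plus $\operatorname{char}(K_v) \neq 2$ to get $1+M_{\tilde v} \subseteq \tilde K_v^{*2}$, then argue that the inclusion $K \hookrightarrow \tilde K_v$ induces an isomorphism $K^*/T \to \tilde K_v^*/\tilde K_v^{*2}$ by exploiting the immediate extension, and finally match up the group extension structures from Proposition \ref{group extension}. That reduction and the surjectivity argument are fine, and the ``explicit inverse via value-and-residue data'' sketch you give at the end is essentially the paper's actual argument.

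However, the injectivity argument as written is wrong. You claim that ``$K$ is relatively algebraically closed in $\tilde K$ for the quadratic sub-extension'' (or that $K$ is ``separably closed in $\tilde K$ in the relevant sense''), concluding $\sqrt{x} \in \tilde K_v \Rightarrow \sqrt{x} \in K$, hence $x \in K^{*2}$. This is false: the henselization $\tilde K_v$ is in general a proper \emph{separable} algebraic extension of $K$, and plenty of elements of $K^*$ become squares in it. For instance, with $K = \mathbb Q$ and $v$ the $5$-adic valuation, $-1$ is a square in the henselization (Hensel's lemma applied to $X^2+1$, which factors mod $5$) but not in $\mathbb Q$. The correct — and weaker — statement you need is $K^* \cap \tilde K_v^{*2} \subseteq T = (1+M_v)K^{*2}$, and it does not come from any algebraic-closure property. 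It comes from the immediate extension: if $x \in K^*$ and $x = y^2$ with $y \in \tilde K_v^*$, then $\tilde v(y) \in \Gamma_v$ lets you pick $a \in K^*$ with $v(a) = \tilde v(y)$; then $y/a \in U_{\tilde v}$ has residue in $K_v^*$ (same residue field), so you can pick $u \in U_v$ with $\pi(u) = \overline{y/a}$, and one checks $x/(au)^2 \in (1+M_{\tilde v}) \cap K^* = 1+M_v$ (using $\operatorname{char}(K_v)\neq 2$ to show $z \in 1+M_{\tilde v} \Rightarrow z^2 \in 1+M_{\tilde v}$), giving $x \in K^{*2}(1+M_v) = T$. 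In the running example this is exactly $-1 = 2^2 \cdot (-1/4)$ with $-1/4 \in 1+5\mathbb Z_{(5)}$. So the injectivity conclusion is correct, but the reason you gave for it would lead you astray; the ``obstacle'' you flagged is real, and the right fix is the immediate-extension computation above (equivalently, the Springer-data inverse you sketch at the end), not relative algebraic closure.
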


Note: The conclusions of Propositions \ref{small detail}, \ref{group extension} and \ref{completion} also hold when $v$ is trivial, provided $K \ne \mathbb{F}_3, \mathbb{F}_5$ and $\operatorname{char}(K) \ne 2$.

\begin{proof} Denote by $\tilde{v}$ the extension of $v$ to $\tilde{K}_v$. Since $(\tilde{K}_v,\tilde{v})$ is henselian and $\operatorname{char}(K_v) \ne 2$, $1+M_{\tilde{v}} \subseteq \tilde{K}_v^{*2}$. It follows that the embedding $K \hookrightarrow \tilde{K}_v$ induces a group homomorphism $\tau : K^*/T \rightarrow \tilde{K}_v^*/\tilde{K}_v^{*2}$. Since $(\tilde{K}_v, \tilde{v})$ is an immediate extension of $(K,v)$, one sees that $\tau$ is an group isomorphism. The image of $Q(K_v)^*$ in $K^*/T$ under the  group extension $\iota: Q(K_v) \hookrightarrow K/_m T$ is identified via $\tau$ with the image of $Q(K_v)^*$ in $Q(\tilde{K}_v)^*$ under the group extension  $\iota :  Q(K_v) \hookrightarrow Q(\tilde{K}_v)$. The conclusion follows from this.
\end{proof}

If $v$ is discrete rank one, one can replace henselization by completion in Proposition \ref{completion}.
The assumption in Proposition \ref{completion} that $\operatorname{char}(K_v)\ne 2$  is crucial. One says that $v$ is \it dyadic \rm  if $\operatorname{char}(K)=0$, $\operatorname{char}(K_v) = 2$. The structure of $Q(\tilde{K}_v)$ when $v$ is dyadic is complicated; see \cite{l} or \cite{M1} for the case where $K$ is a number field and \cite{j} and \cite{j1} for the case where $K$ is arbitrary.

\begin{rem} Suppose $v,v'$ are valuations on $K$ with $v \preceq v'$, i.e., $v'$ is a coarsening of $v$, i.e., $A_v \subseteq A_{v'}$. Then $M_{v'} \subseteq M_v$ so $(1+M_{v'})K^{*2} \subseteq (1+M_v)K^{*2}$. Denote by $\overline{v}$ the valuation on $K_{v'}$ induced by $v$, i.e., $\overline{v}(\pi_{v'}(a)) = v(a)$, for $a\in U_{v'}$.
Note that $\overline{v}$ and $v$ have the same residue field. See \cite[Chapter C]{r} for background. Assume now that $v,v'$ are non-trivial and that $v'$ is a proper coarsening of $v$.
Then $K/_m (1+M_v)K^{*2}$ is a group extension of the hyperfield $K_{v'}/_m(1+M_{\overline{v}})K_{v'}^{*2}$ in a natural way,
and the following diagram of hyperfields and hyperfield morphisms is commutative:
\begin{linenomath}\begin{equation}\label{d0} \xymatrix{
Q(K) \ar[r] & K/_m(1+M_{v'})K^{*2} \ar[r] & K/_m(1+M_v)K^{*2} \\
& Q(K_{v'}) \ar[u] \ar[r] & K_{v'}/_m (1+M_{\overline{v}})K_{v'}^{*2} \ar[u]\\
& & Q(K_v) \ar[u]
}\end{equation}\end{linenomath}
Here, the horizontal arrows are quotient morphisms and the vertical arrows are group extensions.
\end{rem}

Let $T$ be a subgroup of $K^*$.
We say $x\in K^*$ is \it $T$-rigid \rm if $T+Tx \subseteq T\cup Tx.$
\begin{linenomath}\[B(T) := \{ x \in K^*: \text{ either } x \text{ or } -x \text{ is not } T\text{-rigid}\}.\]\end{linenomath}
Elements of $B(T)$ are said to be \it $T$-basic. \rm
Note that if $x\in K^*$ is $T$-rigid and
$y=tx$, $t\in T$, then $y$ is $T$-rigid. Consequently, $B(T)$ is a union of cosets of $T$.
$-1$ is not $T$-rigid (because $0\in T-T$), so $\pm T \subseteq B(T)$.
We say that $T$ is \it exceptional \rm if $B(T)=\pm T$ and either $-1\in T$ or $T$ is additively closed.

We recall the result of Arason, Elman and Jacob alluded to in the introduction:

\begin{theorem}\label{seeing valuations} Let $T \subseteq K^*$ be a subgroup and $H \subseteq K^*$ be a subgroup containing $B(T)$. Then there exists a subgroup $\hat{H}$ of $K^*$ such that $H \subseteq \hat{H}$ and $(\hat{H}:H) \le 2$ and a valuation $v$ of $K$ such that $1+M_v \subseteq T$ and $U_v \subseteq \hat{H}$. Moreover, $\hat{H}=H$ works, unless $T$ is exceptional.
\end{theorem}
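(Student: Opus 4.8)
The plan is to build the valuation ring directly from the subgroup $H$ and the set $B(T)$, following the Arason--Elman--Jacob construction. First I would introduce the set $A := \{0\} \cup \{x \in K^* : 1 + Tx \subseteq H \cup Tx\}$ — intuitively the elements of ``non-negative value'' — and attempt to show that $A$ is a valuation ring of $K$, or at least that it becomes one after enlarging $H$ slightly to $\hat H$. The key structural facts to extract from the hypothesis $B(T) \subseteq H$ are: (i) for $x \notin H$, both $x$ and $-x$ are $T$-rigid, so $T + Tx \subseteq T \cup Tx \subseteq H \cup Tx$; (ii) $T \subseteq H$ (since $\pm T \subseteq B(T) \subseteq H$), and $-1 \in H$. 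From (i) one gets that every $x \notin H$ lies in $A$ or $x^{-1}$ lies in $A$, which is exactly the defining property of a valuation ring once one knows $A$ is a ring. So the real content is proving $A + A \subseteq A$ and $A \cdot A \subseteq A$; multiplicativity should be easy since the rigidity condition is visibly multiplicative on $T$-cosets, but additivity is where the argument has to work.

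For additivity I would argue by the standard case analysis on $v$-values, mimicking the proof of Proposition \ref{small detail} and Proposition \ref{group extension}: given $x, y \in A$, I want $x + y \in A$, i.e.\ $1 + T(x+y) \subseteq H \cup T(x+y)$. Writing $x + y = x(1 + y/x)$ (when $x \ne 0 \ne y$), the problem reduces to controlling sums $1 + z$ for $z = y/x$, and then one plays the rigidity of $1 + Tz$ against the rigidity of $1 + Tw$ for the various ratios $w$ that arise. When no cancellation occurs the rigidity inclusions close up directly; the delicate point is the ``equal value'' case, where $1 + z$ can be a full coset union and one needs $-1 \in H$ (or additive closure of $T$) to keep the sum inside $H$. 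This is precisely where the exceptional subgroups enter: if $T$ is exceptional the naive $A$ built from $H$ itself may fail to absorb one stray coset, and one must pass to $\hat H$ with $(\hat H : H) \le 2$, adjoining a single coset $Tg$ chosen so that $g$ (or $g^{-1}$) repairs the failure; when $T$ is \emph{not} exceptional, $B(T) \supsetneq \pm T$ forces enough rigid elements into $H$ that $\hat H = H$ already works.

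Once $A_v := \hat A$ is known to be a valuation ring with valuation $v$, maximal ideal $M_v$ and units $U_v$, the remaining claims are bookkeeping: $U_v \subseteq \hat H$ because a unit $u$ has $u, u^{-1} \in \hat A$, which by the definition of $\hat A$ (via $H$ or $\hat H$) forces $u \in \hat H$; and $1 + M_v \subseteq T$ because for $m \in M_v$, $m^{-1} \notin \hat A$ gives the rigidity $1 + Tm \subseteq T \cup Tm$, and since $v(m) > 0$ the value comparison rules out the $Tm$ alternative, leaving $1 + m \in T$. I expect the main obstacle to be a clean verification of additivity in the equal-value case together with the correct choice of the extra coset in the exceptional case — getting the index bound $(\hat H : H) \le 2$ to be exactly what is needed, and confirming that $B(T) \supsetneq \pm T$ genuinely makes the extension unnecessary, is the technical heart of the argument. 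I would lean on \cite{aej} for the precise coset-selection lemma rather than re-deriving it from scratch.
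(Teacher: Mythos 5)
The paper offers no proof of this statement at all: it is quoted verbatim from Arason, Elman and Jacob, and the ``proof'' in the paper is the single line ``See \cite[Theorem 2.16]{aej}.'' So there is no argument in the paper to compare against, only a citation. Your sketch does capture the broad shape of the AEJ/Ware-style construction --- build a candidate valuation ring out of $T$-rigid elements, verify additive closure by a case analysis on ``values,'' and in the exceptional case repair the failure by adjoining a single extra coset to $H$ --- and you correctly identify additive closure in the equal-value case as the crux. But you also explicitly defer to \cite{aej} for ``the precise coset-selection lemma,'' which is exactly the part that makes the theorem true, so in substance your proposal reduces to the same citation the paper makes, just with more scaffolding around it.

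One concrete problem with the scaffolding as written: the candidate ring $A := \{0\} \cup \{x \in K^* : 1 + Tx \subseteq H \cup Tx\}$ need not contain $1$. If $-1 \in T$ (which is permitted --- the exceptional case explicitly allows it), then $0 \in 1 + T$, and $0$ lies in neither $H$ nor $T$, so $1 \notin A$; more generally every $x \in -T$ is excluded for the same reason. You would at minimum need to intersect with $K^*$, and even then it is not clear this is the set AEJ use: their construction is arranged rather carefully precisely to sidestep this kind of degenerate case and to isolate where the index-two enlargement of $H$ is forced. If the goal is to actually reconstruct the argument rather than cite it, the safer route is to start from the exact definitions and lemmas of \cite[Section 2]{aej} (or the closely related setup in Ware \cite{wa}) rather than from a guessed form of the valuation ring.
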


\begin{proof} See \cite[Theorem 2.16]{aej}.
\end{proof}

We will apply Theorem \ref{seeing valuations} to study Witt equivalence of function fields over global fields. We make frequent use of the following:

\begin{prop} \label{remark on basic part} \

(1) $B(K^{*2})$ is a subgroup of $K^*$.

(2) Suppose $T = (1+M_v)K^{*2}$ for some non-trivial valuation $v$ of $K$. Then $B(T) \subseteq U_vK^{*2}$ and \begin{linenomath}\[B(T) = \{ x\in K^* : \overline{x} = \iota(\overline{y}) \text{ for some } y \in B(K_v^{*2})\},\]\end{linenomath} where $\iota : Q(K_v) \hookrightarrow K/_mT$ is the morphism in Proposition \ref{group extension}.  $B(T)$ is a group and the group isomorphism $\iota : K_v^*/K_v^{*2} \rightarrow U_vK^{*2}/T$ induces a group isomorphism $B(K_v^{*2})/K_v^{*2} \rightarrow B(T)/T$. $T$ is exceptional iff $K_v^{*2}$ is exceptional.
\end{prop}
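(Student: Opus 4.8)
The plan is to prove the four assertions of Proposition~\ref{remark on basic part} in sequence, using the structural description of $K/_mT$ as a group extension of $Q(K_v)$ from Propositions~\ref{group extension} and~\ref{completion}.

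For part (1), I would argue directly from the definitions. The set $B(K^{*2})$ is a union of cosets of $K^{*2}$, contains $\pm K^{*2}$, and an element $x$ fails to be in $B(K^{*2})$ exactly when both $x$ and $-x$ are $K^{*2}$-rigid, i.e. $D_K\langle 1, x\rangle \subseteq \{1,x\}K^{*2}$ and $D_K\langle 1, -x\rangle \subseteq \{1,-x\}K^{*2}$ in the multiplicative-group sense. To show $B(K^{*2})$ is a subgroup it suffices to show its complement $R$ (the set of $x$ with both $x,-x$ rigid, together with the requirement of being a full union of cosets — here one must be slightly careful since $1$ itself is not rigid but lies in the complement by convention) behaves well; the cleanest route is to show that if $x,y\notin B(K^{*2})$ then $xy\notin B(K^{*2})$, using the standard computation that rigidity of $x$ and $y$ forces rigidity of $xy$ via the value-set identities for binary forms (this is essentially \cite[Corollary~2.9 or similar]{aej}, and I would cite it rather than reprove it). I expect part (1) to follow quickly from the cited \cite{aej} machinery.

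For part (2), the key point is that $K/_mT$ is a group extension of $Q(K_v)$ by $\Gamma_v/2\Gamma_v$, and that in a group extension the rigid elements of the bigger hyperfield are controlled by the base. First I would show $B(T)\subseteq U_vK^{*2}$: if $x\notin U_vK^{*2}$, then in the group extension $\iota:Q(K_v)\hookrightarrow K/_mT$ the element $\overline{x}\in K^*/T$ lies outside the image $\iota(K_v^*/K_v^{*2})$, hence is rigid by the very definition of group extension, i.e. $T+Tx\subseteq T\cup Tx$; the same applies to $-x$ since $-x$ is also outside $U_vK^{*2}$ (as $-1\in U_v$); so $x\notin B(T)$. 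This gives $B(T)\subseteq U_vK^{*2}$. Next, for $x\in U_vK^{*2}$ write $\overline{x}=\iota(\overline{y})$ with $y\in K_v^*$; I would show $x$ is $T$-rigid iff $y$ is $K_v^{*2}$-rigid, by transporting the relation $T+Tx\subseteq T\cup Tx$ through the group-extension property $\iota(1+\overline{y})=1+\iota(\overline{y})$ of Proposition~\ref{group extension}(2) (reducing, after scaling, to the statement $\iota(1+\overline y)\subseteq\{1,\iota(\overline y)\}$ iff $1+\overline y\subseteq\{1,\overline y\}$, which holds because $\iota$ is injective and $\iota(1)=1$). Applying this to both $x$ and $-x$ (note $-x$ corresponds to $-y$ since $\iota(-\overline y)=-\iota(\overline y)$) yields the displayed formula $B(T)=\{x\in K^*:\overline x=\iota(\overline y),\ y\in B(K_v^{*2})\}$. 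That $B(T)$ is a group then follows from part (1) applied to $K_v$ together with this bijection, and the group isomorphism $\iota:K_v^*/K_v^{*2}\to U_vK^{*2}/T$ restricts to the claimed isomorphism $B(K_v^{*2})/K_v^{*2}\to B(T)/T$.

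Finally, for the exceptionality equivalence, recall $T$ is exceptional iff $B(T)=\pm T$ and ($-1\in T$ or $T$ is additively closed). By the coset bijection just established, $B(T)=\pm T$ in $K^*/T$ corresponds exactly to $B(K_v^{*2})=\pm K_v^{*2}$ in $K_v^*/K_v^{*2}$. For the second clause I would show $-1\in T$ iff $-1\in K_v^{*2}$ (since $T\cap U_vK^{*2}$ maps isomorphically onto $K_v^*/K_v^{*2}$ with $-1\mapsto\pi(-1)$, and $-1\in T$ forces $-1\in U_vK^{*2}$), and that $T$ is additively closed iff $K_v^{*2}$ is additively closed (transport through $\iota$ using $\iota(1+\overline a)=1+\iota(\overline a)$ again, together with parts (1)–(2) of Proposition~\ref{small detail} which say $K/_mT$ is already "primed" so additive closure of $T$ is equivalent to $T\cup Tx\subseteq T$ for the relevant $x$, matching the corresponding condition over $K_v$). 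Combining, $T$ is exceptional iff $K_v^{*2}$ is exceptional.

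The main obstacle will be part (2): carefully transferring rigidity and the structure of value sets back and forth across the group extension $\iota$, making sure the scaling reductions and the bookkeeping for $-1$ (which is rigid but not "basic-excluded") are handled correctly, and correctly invoking the group-extension identity $\iota(1+\overline y)=1+\iota(\overline y)$ only for $\overline y\ne-\overline 1$ while separately treating $\overline y=-\overline 1$. Parts (1), the second half of (2), and the exceptionality statement are then largely formal consequences once this dictionary is set up.
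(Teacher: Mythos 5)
Your overall plan matches the paper's: cite a known result for (1) (the paper cites \cite[Theorem 5.18]{M1}, due to L.\ Berman; \cite{aej} contains similar material), and for (2) derive everything by transporting rigidity through the group-extension morphism $\iota$ of Proposition~\ref{group extension}. The paper's own proof of (2) is just one sentence ("a straightforward consequence of Proposition~\ref{group extension}"), and the details you spell out --- elements of $K^*\setminus U_vK^{*2}$ are rigid because they lie outside the image of $\iota$; for $\overline{x}=\iota(\overline{y})$ with $\overline{y}\ne-\overline{1}$, apply $\iota(1+\overline{y})=1+\iota(\overline{y})$ and injectivity of $\iota$; handle $-1\in T\Leftrightarrow -1\in K_v^{*2}$ and additive closure separately --- are exactly the right ones.

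However, your sketch of part (1) contains genuine errors, even though you ultimately (and correctly) fall back on a citation. The statement "rigidity of $x$ and $y$ forces rigidity of $xy$" is false: taking $y=x$ it would give that rigidity of $x$ forces rigidity of $x^2=1$, i.e.\ that $K^{*2}$ is additively closed, which is not generally the case. Likewise, showing that the complement $R$ of $B(K^{*2})$ is closed under products would not prove $B(K^{*2})$ is a subgroup; in fact $R\cdot R\subseteq R$ is \emph{never} true when $R\ne\emptyset$, because $x\in R$ gives $x\cdot x=1\in B(K^{*2})$. What needs to be shown is that $B(K^{*2})$ itself is closed under products, and this is the nontrivial content of Berman's theorem. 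The parenthetical remark that "$1$ itself is not rigid but lies in the complement by convention" is also wrong on both counts: whether $1$ is $K^{*2}$-rigid depends on the field, and in any case $1\in B(K^{*2})$ (not in the complement) because $-1$ is never $K^{*2}$-rigid. Similarly, at the end you say "$-1$ (which is rigid ...)", but $-1$ is not rigid, precisely because $0\in T-T$. None of this damages the final conclusion since you rely on the citation for (1), but these claims about the structure of the rigid set, if taken at face value, would lead an unwary reader astray.
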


\begin{proof} (1) This is due to L. Berman. See \cite[Theorem 5.18]{M1} for the proof. (2)  The fact that $B(T)$ is a group follows from the fact that $B(K_v^{*2})$ is a group. %, by (1). 
The remaining assertions in (2) are a straightforward consequence of Proposition \ref{group extension}.
\end{proof}

\section{Matching valuations}

For any abelian group $\Gamma$, the \it rational rank \rm of $\Gamma$, denoted $\operatorname{rk}_{\mathbb{Q}}(\Gamma)$, is defined to be the dimension of the $\mathbb{Q}$-vector space $\Gamma \otimes_{\mathbb{Z}} \mathbb{Q}$.

We apply Theorem \ref{seeing valuations} to obtain useful results concerning the behaviour of valuations under Witt equivalence; refer to Theorem \ref{main lemma} below. We begin with two lemmas.

\begin{lemma} \label{rank estimate} If $\Gamma$ is a torsion free abelian group and $|\Gamma/2\Gamma| = 2^r$, then $\operatorname{rk}_{\mathbb{Q}}(\Gamma)\ge r$.
\end{lemma}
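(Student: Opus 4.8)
The plan is to prove the contrapositive-friendly statement directly: show that if $|\Gamma/2\Gamma| = 2^r$ then $\Gamma$ contains $r$ elements that are linearly independent over $\mathbb{Z}$, which immediately gives $\operatorname{rk}_{\mathbb{Q}}(\Gamma) \ge r$ since $\Gamma \hookrightarrow \Gamma \otimes_{\mathbb{Z}} \mathbb{Q}$ (torsion-freeness is exactly what makes this injection work, so these $r$ elements stay independent after tensoring with $\mathbb{Q}$). So the whole content is: produce $r$ independent elements.

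First I would pick elements $x_1, \dots, x_r \in \Gamma$ whose images form an $\mathbb{F}_2$-basis of the $\mathbb{F}_2$-vector space $\Gamma/2\Gamma$. The claim is that $x_1, \dots, x_r$ are $\mathbb{Z}$-linearly independent in $\Gamma$. Suppose not: there is a nontrivial relation $\sum_{i=1}^r a_i x_i = 0$ with $a_i \in \mathbb{Z}$, not all zero. The key manoeuvre is to divide out by the largest power of $2$: let $2^k$ be the largest power of $2$ dividing all the $a_i$ simultaneously, write $a_i = 2^k b_i$, so $\sum b_i x_i = 0$ by torsion-freeness (we cancel $2^k$), and now not all $b_i$ are even. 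Reducing $\sum b_i x_i = 0$ modulo $2\Gamma$ gives $\sum \bar{b_i} \bar{x_i} = 0$ in $\Gamma/2\Gamma$ with not all $\bar{b_i} \in \mathbb{F}_2$ zero — contradicting the linear independence of $\bar{x_1}, \dots, \bar{x_r}$ over $\mathbb{F}_2$.

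There is essentially no obstacle here; the one point that needs care is the use of torsion-freeness to cancel $2^k$, and implicitly to guarantee $\Gamma \to \Gamma \otimes_{\mathbb{Z}} \mathbb{Q}$ is injective so that $\mathbb{Z}$-independence in $\Gamma$ yields $\mathbb{Q}$-independence in $\Gamma \otimes \mathbb{Q}$. (If one wanted, one could phrase the whole argument inside $V := \Gamma \otimes_{\mathbb{Q}}$, noting $\Gamma/2\Gamma$ is a quotient of $\Gamma$ which maps into $V$, but the elementary version above is cleaner.) I would also remark — though it is not needed for the statement — that equality can fail, e.g. $\Gamma = \mathbb{Z}^r$ gives equality whereas $\Gamma = \mathbb{Q}$ has $\Gamma/2\Gamma = 0$ but rank $1$; the inequality is the only thing that survives in general, which is all the later application requires.
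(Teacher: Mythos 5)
Your argument is correct and is essentially the same as the paper's: both pick lifts of an $\mathbb{F}_2$-basis of $\Gamma/2\Gamma$, and given a nontrivial $\mathbb{Z}$-relation, divide out the highest power of $2$ (using torsion-freeness to cancel) to obtain a relation with an odd coefficient, which contradicts independence modulo $2$. You are a bit more explicit about why torsion-freeness is needed and why $\mathbb{Z}$-independence yields $\mathbb{Q}$-independence, but the underlying idea is identical.
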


This is well known. Observe that if $\Gamma \cong \mathbb{Z}\times \dots \times \mathbb{Z}$ ($r$ factors) then $|\Gamma/2\Gamma| = 2^r$, so $\operatorname{rk}_{\mathbb{Q}}(\Gamma)= r$ holds in this case. On the other hand, if $\Gamma = \mathbb{Q}$ for example then $\operatorname{rk}_{\mathbb{Q}}(\Gamma) =1$, $r=0$.

\begin{proof}  We claim that if $\alpha_1, \dots,\alpha_r \in \Gamma$ are such that the cosets $\alpha_i+2\Gamma$, $i=1,\dots,r$ are $\mathbb{F}_2$-linearly independent, then the $\alpha_i$, $i=1,\dots,r$ are $\mathbb{Q}$-linearly independent. Suppose not. Then $\exists$ $k_i \in \mathbb{Z}$ not all zero such that $\sum k_i\alpha_i = 0$. Dividing by a suitable power of $2$, we can assume at least one of the $k_i$ is odd. This contradicts the assumption.
\end{proof}

\begin{lemma} \label{technical lemma} Suppose $v$, $w$ are non-comparable valuations on a field $K$ and $\Gamma_v$ is finitely generated as an abelian group. Then $(1+M_w)K^{*2} \not\subseteq (1+M_v)K^{*2}$.
\end{lemma}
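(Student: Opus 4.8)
The plan is to argue by contradiction: assume $(1+M_w)K^{*2} \subseteq (1+M_v)K^{*2}$ and derive that $v$ and $w$ must be comparable, contradicting the hypothesis. Write $T_v = (1+M_v)K^{*2}$ and $T_w = (1+M_w)K^{*2}$. The containment $T_w \subseteq T_v$ is exactly the kind of hypothesis that should let us compare valuation rings, so the first step is to recall how the subgroup $(1+M_v)K^{*2}$ "sees" the valuation $v$. The key structural fact from Section~4 is Proposition~\ref{group extension}: $K/_m T_v$ is a group extension of $Q(K_v)$ with cokernel $K^*/U_vK^{*2} \cong \Gamma_v/2\Gamma_v$. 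So the index $(K^* : U_v K^{*2}) = |\Gamma_v/2\Gamma_v|$, and since $\Gamma_v$ is finitely generated abelian with $\operatorname{rk}_{\mathbb{Q}}(\Gamma_v)$ finite, this index is finite, say $2^{r_v}$ with $r_v \le \operatorname{rk}_{\mathbb{Q}}(\Gamma_v) < \infty$ by Lemma~\ref{rank estimate} (in the opposite direction — here finiteness of the generating set is what matters).

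Next I would exploit the containment combined with finiteness. From $T_w \subseteq T_v$ we get a surjection $K^*/T_w \twoheadrightarrow K^*/T_v$, hence $K^*/T_v$ is a quotient of $K^*/T_w$. Now $U_w K^{*2}/T_w$ is a subgroup of $K^*/T_w$, and its image in $K^*/T_v$ is $U_w K^{*2} T_v / T_v = U_w K^{*2}/T_v$ (using $T_v \subseteq U_v K^{*2}$... more carefully, $U_w K^{*2} \cdot T_v/T_v$). The essential point: since $(K^* : U_v K^{*2})$ is finite, and we want to conclude something about how $U_w$ sits relative to $U_v$. Actually the cleanest route: the quotient $K^*/U_v K^{*2} \cong \Gamma_v/2\Gamma_v$ is finite, while if $v, w$ are non-comparable, then the valuation $v \wedge w$ (the finest common coarsening, whose valuation ring is the subring generated by $A_v$ and $A_w$) is a proper coarsening of both, and one expects $\Gamma_{v\wedge w}$ — equivalently the quotient group $K^*/U_{v\wedge w}K^{*2}$ — to be "smaller" while $M_{v \wedge w}$ is strictly contained in both $M_v$ and $M_w$. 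The relation $1 + M_w \subseteq (1+M_v)K^{*2}$ should then be shown to force $M_w \subseteq M_v$ up to the squaring ambiguity, and since $v$ has finitely generated value group one can pin down that the squaring ambiguity is harmless and conclude $A_v \subseteq A_w$ or $A_w \subseteq A_v$.

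Concretely, here is the mechanism I would try to make precise. Pick $x \in K^*$ with $v(x) > 0$ but $w(x) = 0$ (possible if $A_w \not\subseteq A_v$, i.e. if there is an element of $M_v$ that is a $w$-unit); non-comparability gives us room to find elements distinguishing the two valuations. For suitable $n$, the element $1 + x^{2n+1}$ or similar: we want to produce an element of $1+M_w$ that is \emph{not} in $(1+M_v)K^{*2}$, directly contradicting the containment. The idea: if $u \in M_w$ is a $v$-unit, consider $1 + u$. It lies in $1+M_w \subseteq T_w \subseteq T_v = (1+M_v)K^{*2}$, so $1+u = (1+m)c^2$ for some $m \in M_v$, $c \in K^*$. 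Comparing $v$-values: $v(1+u) = 0$ since $u$ is a $v$-unit and $1+u$ need not be a $v$-unit only if $v(u) = 0$ and $\pi_v(u) = -1$ — so here is where we use that we can modify $u$ by multiplying by a square of a high-$w$-valuation, or add a cube, to arrange $\pi_v(u) \ne -1$. Then $v(1+u) = 0$ forces $2v(c) = -v(1+m) = 0$, so $v(c) = 0$; then mod $M_v$ we get $\pi_v(1+u) = \pi_v(c)^2$, i.e. $1 + \pi_v(u)$ is a square in $K_v^*$. Since $\Gamma_v$ is finitely generated this doesn't immediately bite, so instead we iterate over a family of such $u$'s (using non-comparability to get an infinite supply of $w$-units with prescribed, independently varying $v$-residues) and conclude that $1 + \pi_v(u) \in K_v^{*2}$ for "too many" residues, which combined with $\Gamma_w/2\Gamma_w$ being large forces a contradiction — or, more likely, the cleaner statement is that non-comparability lets us pick $u$ with $w(u) > 0$, $v(u) < 0$, and then $v(1+u) = v(u) < 0$, forcing $v(u) = 2v(c) \in 2\Gamma_v$; ranging over such $u$ we get $v(M_w \setminus \{0\})$, projected appropriately, landing in $2\Gamma_v$, which since $\Gamma_v$ is finitely generated forces the convex subgroup generated by $v(M_w^{\text{part}})$ to interact rigidly with $\Gamma_v$, yielding comparability.

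The main obstacle, I expect, is organizing the case analysis so that non-comparability is \emph{used} rather than just assumed: one must translate "$v, w$ non-comparable" into the existence of elements $a$ with $v(a) > 0 \ge w(a)$ \emph{and} $b$ with $w(b) > 0 \ge v(b)$ simultaneously (so neither valuation ring contains the other), and then feed these into $1+M_w \subseteq (1+M_v)K^{*2}$. The finite generation of $\Gamma_v$ enters to prevent $2\Gamma_v$ from being "everything" — e.g. $\Gamma_v/2\Gamma_v$ is finite — which is exactly what rules out the pathological cases where $\Gamma_v$ is divisible. I would be careful about the residue-characteristic-$2$ subtlety (where $1 + M_v$ need not relate so cleanly to squares) but note that the argument only needs the containment of subgroups of $K^*$, not any henselian input, so Proposition~\ref{completion} is not required; the relevant tools are Proposition~\ref{small detail}, Proposition~\ref{group extension}, and Lemma~\ref{rank estimate}.
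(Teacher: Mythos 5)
Your proposal circles around several of the right ingredients — the finest common coarsening, the finiteness of $\Gamma_v/2\Gamma_v$, and the idea of exhibiting an element of $1+M_w$ that fails to lie in $U_vK^{*2} \supseteq (1+M_v)K^{*2}$ — but it never assembles them into a proof, and the two steps you leave vague are exactly the two steps that do the work. First, the paper genuinely \emph{performs} the reduction you only allude to: it passes from $K$ to the residue field $K_u$ of the finest common coarsening $u$ of $v$ and $w$, replacing $v,w$ by the induced valuations $\overline{v},\overline{w}$ on $K_u$; since $u$ is the \emph{finest} common coarsening, $\overline{v}$ and $\overline{w}$ are independent, and $\Gamma_{\overline{v}}$ is a (finitely generated, nontrivial) subgroup of $\Gamma_v$. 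One still has to check that an element of $(1+M_{\overline{w}})\setminus(1+M_{\overline{v}})K_u^{*2}$ lifts to an element of $(1+M_w)\setminus(1+M_v)K^{*2}$, which is a short but nonvacuous verification you do not supply. Second, and more importantly, your "cleaner statement" at the end — find $u$ with $w(u)>0$ and $v(u)\notin 2\Gamma_v$ — is precisely what the approximation theorem for \emph{independent} valuations produces in one step: choose $p$ with $v(p)\notin 2\Gamma_v$ (possible since $\Gamma_v$ is nontrivial and finitely generated, so $\Gamma_v/2\Gamma_v\ne 0$), then pick $x$ with $w(x-1)>0$ and $v(x-p)>v(p)$, giving $x\in 1+M_w$ with $v(x)=v(p)\notin 2\Gamma_v$. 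You never invoke the approximation theorem, and in its place you sketch an accumulation argument over "many $u$'s" landing in $2\Gamma_v$ which does not converge to a contradiction.

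The missing idea, concretely, is the pair (reduce to independent via finest common coarsening) + (approximation theorem). Without independence the approximation theorem is unavailable, which is why "non-comparable" alone does not directly give you the element you want; the paper's reduction to $K_u$ is what buys you that. Your residue-field case analysis ($v(1+u)=0$, $\pi_v(1+u)\in K_v^{*2}$, iterating over residues) is a genuine detour: the paper never looks at residue fields at all in this lemma, and the whole contradiction is obtained at the level of value groups by violating $x\in U_vK^{*2}$.
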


Note: Since the abelian group $\Gamma_v$ is torsion free, the assumption that $\Gamma_v$ is finitely generated is equivalent to $\Gamma_v \cong \mathbb{Z}\times \dots \times \mathbb{Z}$, $r$ times, for some $r\ge 0$.

\begin{proof} Denote by $u$ the finest common coarsening of $v$ and $w$ and by $\overline{v}$ and $\overline{w}$ the valuations on $K_u$ induced by $v$ and $w$ respectively.
Since $\Gamma_{\overline{v}}$ is a subgroup of $\Gamma_v$, $\Gamma_{\overline{v}}$ is also finitely generated.  Replacing $K$ by $K_u$ and $v$ and $w$ by $\overline{v}$ and $\overline{w}$, we are reduced to the case where $v$ and $w$ are independent. Fix $p\in K^*$ with $v(p) \notin 2\Gamma_v$. By the approximation theorem there exists $x\in K$ such that $v(x-p)>v(p)$ and $w(x-1)>0$. Then $x \in 1+M_w$, and $v(x) = v(p) \notin 2\Gamma_v$, so $x\notin U_vK^{*2}$. Since $(1+M_v)K^{*2} \subseteq U_vK^{*2}$ this implies $x \notin (1+M_v)K^{*2}$.
\end{proof}

\begin{theorem} \label{main lemma} Suppose $K$, $L$ are fields, $\alpha : Q(K) \rightarrow Q(L)$ is a hyperfield isomorphism and $v$ is a valuation on $K$ such that $\Gamma_v$ is finitely generated as an abelian group. Suppose either
(i) the basic part of $(1+M_v)K^{*2}$ is $U_vK^{*2}$ and $(1+M_v)K^{*2}$ is unexceptional, or
(ii) the basic part of $(1+M_v)K^{*2}$ is $(1+M_v)K^{*2}$ and $(1+M_v)K^{*2}$ has index 2 in $U_vK^{*2}$.
Then there exists a valuation $w$ on $L$ such that the image of $(1+M_v)K^{*2}/K^{*2}$ under $\alpha$ is $(1+M_w)L^{*2}/L^{*2}$ and $(L^*:U_wL^{*2}) \ge (K^*:U_vK^{*2})$. If (i) holds, then
the image of $U_vK^{*2}/K^{*2}$ under $\alpha$ is $U_wL^{*2}/L^{*2}$.
\end{theorem}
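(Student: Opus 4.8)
The plan is to transport the subgroup $T := (1+M_v)K^{*2}$ of $K^*$ across the isomorphism $\alpha$ and then apply Theorem~\ref{seeing valuations} to recover a valuation on $L$. First I would set $T' \subseteq L^*$ to be the subgroup with $T'/L^{*2} = \alpha(T/K^{*2})$; this makes sense because $\alpha$ is a group isomorphism $K^*/K^{*2}\to L^*/L^{*2}$ carrying $-\overline 1$ to $-\overline 1$, and because $K^{*2}\subseteq T$, so $T'$ contains $L^{*2}$. The key observation is that the notion of $T$-rigidity, and hence the basic set $B(T)$, is expressed purely in terms of the value-set maps $D_K\langle\cdot,\cdot\rangle$ (equivalently, in terms of hyperaddition on $Q(K)$ restricted to cosets of $T/K^{*2}$): $x$ is $T$-rigid iff $T+Tx \subseteq T\cup Tx$, and this is a statement about $Q(K)/_m(T/K^{*2})$ that $\alpha$ preserves. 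So $\alpha$ carries $B(T)/K^{*2}$ onto $B(T')/L^{*2}$, carries exceptionality of $T$ to exceptionality of $T'$ (the exceptional condition "$-1\in T$ or $T$ additively closed" is again hyperfield-theoretic), and by Proposition~\ref{remark on basic part}(2) it carries $U_vK^{*2}/K^{*2}$, which equals $B(T)/K^{*2}$ under hypothesis (i), onto $B(T')/L^{*2}$.

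Next I would apply Theorem~\ref{seeing valuations} on the $L$ side with the chosen subgroup $T'$ and with $H := B(T')$ (legitimate since $B(T')$ is a group by Proposition~\ref{remark on basic part}, once we know $T'$ is of the form $(1+M)L^{*2}$-ish — but actually we should apply the theorem directly: $B(T')\supseteq B(T')$ trivially, so $H=B(T')$ is an allowed choice). The theorem yields $\hat H$ with $(\hat H:B(T'))\le 2$ and a valuation $w$ on $L$ with $1+M_w\subseteq T'$ and $U_w\subseteq\hat H$. Under hypothesis (i), since $T'$ is unexceptional (transported exceptionality), $\hat H=B(T')$ works, so $U_w L^{*2}\subseteq B(T')$; combined with $1+M_w\subseteq T'$ this should force, via the structure in Propositions~\ref{group extension} and~\ref{remark on basic part}, the equalities $(1+M_w)L^{*2}=T'$ and $U_wL^{*2}=B(T')$, i.e. the image of $U_vK^{*2}/K^{*2}$ is $U_wL^{*2}/L^{*2}$. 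Under hypothesis (ii), $T'$ may be exceptional, and one gets $1+M_w\subseteq T'$ with $U_wL^{*2}$ of index at most $2$ over $B(T')=T'$; here one has to check $(1+M_w)L^{*2}=T'$ still holds (the index-$2$ slack is absorbed into $U_w$, not into $1+M_w$).

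For the inequality $(L^*:U_wL^{*2})\ge(K^*:U_vK^{*2})$, I would argue through rational ranks of value groups. On the $K$ side, $(K^*:U_vK^{*2})=|\Gamma_v/2\Gamma_v|=2^r$ where $r=\operatorname{rk}_{\mathbb Q}(\Gamma_v)$ since $\Gamma_v$ is finitely generated and torsion free. The isomorphism $\alpha$ restricted to $K^*/T\cong L^*/T'$ matches the cokernel $K^*/U_vK^{*2}\cong\Gamma_v/2\Gamma_v$ with the corresponding subquotient $B(T')$-to-$L^*$ piece on the $L$ side (using that $\alpha$ matches $U_vK^{*2}$ with $B(T')\supseteq$ or $=T'$-plus-basic). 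Then $|L^*/U_wL^{*2}|=|\Gamma_w/2\Gamma_w|\ge|B(T')/T'|=|U_vK^{*2}/T|=2^r$ since $U_wL^{*2}\subseteq$ something contained in $U_w L^{*2}$... more carefully: $U_wL^{*2}/L^{*2}$ has index in $L^*/L^{*2}$ at least the index of $B(T')L^{*2}$-adjusted-group, which $\alpha$ shows equals $2^r$, and Lemma~\ref{rank estimate} together with $1+M_w\subseteq T'$ (forcing $\Gamma_w$ to surject appropriately) closes the gap.

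I expect the main obstacle to be the \emph{identification} step: showing that the valuation $w$ produced by Theorem~\ref{seeing valuations} actually satisfies $(1+M_w)L^{*2}=T'$ on the nose, rather than merely $1+M_w\subseteq T'$. The theorem only guarantees the inclusion, so one must use the basic-part hypotheses (i)/(ii) — which pin down the relationship between $T$, $B(T)$ and $U_vK^{*2}$ — transported via $\alpha$, to rule out $w$ being a strict coarsening or refinement that would make $(1+M_w)L^{*2}$ strictly smaller. This is exactly where non-comparability arguments in the spirit of Lemma~\ref{technical lemma} and the finite-generation of $\Gamma_v$ (hence the matching finiteness of the relevant index) are needed, and handling the exceptional case (ii) with its index-$2$ ambiguity will require the most care.
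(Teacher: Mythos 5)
Your outline is in the right spirit — transport $T := (1+M_v)K^{*2}$ through $\alpha$, observe that rigidity, $B(T)$, and exceptionality are purely hyperfield-theoretic and hence preserved, apply Theorem~\ref{seeing valuations} on the $L$-side — and you correctly identify that the crux is upgrading the inclusion $1+M_w \subseteq S$ given by the theorem to the equality $(1+M_w)L^{*2}=S$. You also correctly sense that Lemma~\ref{technical lemma} and the finite generation of $\Gamma_v$ must be used. But you do not articulate the device that actually closes that gap, and the alternative you gesture at (extracting the equality ``via the structure in Propositions~\ref{group extension} and~\ref{remark on basic part}'') does not work: those propositions describe how a subgroup of the form $(1+M_w)L^{*2}$ sits inside $U_wL^{*2}$, but they tell you nothing about how the subgroup $S$ you started with relates to the valuation $w$ the theorem hands you, which is precisely what is unknown.

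The missing idea is to apply the Arason--Elman--Jacob construction a \emph{second} time, now from $L$ back to $K$. Set $S' := (1+M_w)L^{*2}$ and pull it back: $T' := \{t \in K^* : \alpha(\overline t)=\overline s \text{ for some } s \in S'\}$, so $T' \subseteq T$. If $T'=T$ you are done. If $T' \subsetneq T$, then since $B(S') \subseteq U_wL^{*2}$ has index $\ge 2^r$ (where $r=\operatorname{rk}_{\mathbb Q}(\Gamma_v)$), transporting back gives $B(T')$ of index $\ge 2^r$, and Theorem~\ref{seeing valuations} applied on the $K$-side to $T'$ produces a valuation $v'$ with $(1+M_{v'})K^{*2}\subseteq T' \subsetneq T=(1+M_v)K^{*2}$ and $U_{v'}K^{*2}$ of index $\ge 2^r$. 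Lemma~\ref{technical lemma} (using finite generation of $\Gamma_v$) then forces $v,v'$ comparable; the strict inclusion rules out $v'\preceq v$, so $v \precneqq v'$ and $\Gamma_{v'}$ is a proper quotient of $\Gamma_v \cong \mathbb{Z}^r$, hence $\operatorname{rk}_{\mathbb Q}(\Gamma_{v'})<r$. By Lemma~\ref{rank estimate} this contradicts the index bound on $U_{v'}K^{*2}$. Without this round-trip argument there is no mechanism to rule out $(1+M_w)L^{*2}$ being strictly smaller than $S$. Separately, your argument for $(L^*:U_wL^{*2})\ge(K^*:U_vK^{*2})$ via ``matching cokernels'' is muddled; this inequality is immediate from Theorem~\ref{seeing valuations} once you record that $B(S)$ (case (i)) or $S$ (case (ii)) has index $2^r$ resp.\ $2^{r+1}$, so $U_wL^{*2}\subseteq\hat H$ has index $\ge 2^r$.
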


\begin{proof} Let $r := \operatorname{rk}_{\mathbb{Q}}(\Gamma_v)$. If $r=0$ then $v$ is the trivial valuation on $K$, and we take $w$ to be the trivial valuation on $L$ in this case. Assume now that $r>0$.  Set  $T := (1+M_v)K^{*2}$, $S:=\{ s\in L^* : \overline{s}= \alpha(\overline{t}) \text{ for some } t\in T\}$. $U_vK^{*2}$ has index $2^r$. In case (i) $T$ is unexceptional and $B(T) = U_vK^{*2}$, so $B(T)$ has index $2^r$. In case (ii) $B(T)=T$ has index $2^{r+1}$.
The results for $T$ and $B(T)$ carry over to $S$ and $B(S)$ via $\alpha$, i.e., in case (i), $S$ is unexceptional and $B(S)$ is a group of index $2^r$ and, in case (ii), $S$ has index $2^{r+1}$ and $B(S) = S$. Applying Theorem \ref{seeing valuations} to the subgroup $S$ of $L^*$, there exists a valuation $w$ of $L$ with $(1+M_w)L^{*2} \subseteq S$, and $U_wL^{*2}$ has index $\ge 2^r$. In case (i) we can also assume $U_wL^{*2} \subseteq B(S)$. Let $S' = (1+M_w)L^{*2}$, $T' = \{ t\in K^* : \alpha(\overline{t}) = \overline{s} \text{ for some } s \in S'\}$. Note that $B(S') \subseteq U_wL^{*2}$ so the group $B(S')$ has index $\ge 2^r$, and, consequently, the group $B(T')$ has index $\ge 2^r$. If $S' = S$, equivalently, $T'=T$, we are done. Suppose now that $T' \subsetneqq T$ (so, in particular, $T'$ has index $\ge 2^{r+2}$).
Applying Theorem \ref{seeing valuations} one more time, there exists a valuation $v'$ of $K$ with $(1+M_{v'})K^{*2} \subseteq T'$ and $U_{v'}K^{*2}$ has index $\ge 2^r$. (If $T'$ is unexceptional this is clear. If $T'$ is exceptional this is also clear, since then $B(T') = \pm T'$ has index $\ge 2^{r+1}$.)  Then $(1+M_{v'})K^{*2} \subsetneqq (1+M_v)K^{*2}.$ Since $(1+M_{v'})K^{*2} \subseteq (1+M_v)K^{*2}$, $v,v'$ are comparable, by Lemma \ref{technical lemma}.  Since $(1+M_v)K^{*2} \not\subseteq (1+M_{v'})K^{*2}$, $v' \npreceq v$. Consequently, $v \precneqq v'$, so $\Gamma_{v'}$ is a proper quotient of $\Gamma_v$. This contradicts the fact that $U_{v'}K^{*2}$ has index $\ge 2^r$ (so $\operatorname{rk}_{\mathbb{Q}}(\Gamma_{v'})\ge r$).
\end{proof}

\begin{prop} \label{trivial observation} \

(1) Suppose $K$, $L$ are fields and $\alpha : Q(K) \rightarrow Q(L)$ is a hyperfield isomorphism such that the image of $(1+M_v)K^{*2}/K^{*2}$ under $\alpha$ is $(1+M_w)L^{*2}/L^{*2}$. Then $\alpha$ induces a hyperfield isomorphism $K/_m (1+M_v)K^{*2} \rightarrow L/_m (1+M_w)L^{*2}$ such that the obvious diagram
\begin{linenomath}\begin{equation}\label{d1} \xymatrix{
Q(K) \ar[r] \ar[d] & Q(L) \ar[d] \\
K/_m(1+M_v)K^{*2} \ar[r] & L/_m(1+M_w)L^{*2}}\end{equation}\end{linenomath}
commutes.

(2) If, in addition, the image of $U_vK^{*2}/K^{*2}$ under $\alpha$ is $U_wL^{*2}/L^{*2}$, then $\alpha$ induces a hyperfield isomorphism $Q(K_v)\rightarrow Q(L_w)$ and a group isomorphism $\Gamma_v/2\Gamma_v \rightarrow \Gamma_w/2\Gamma_w$ such that the obvious diagrams
\begin{linenomath}\begin{equation}\label{d2} \xymatrix{
K/_m(1+M_v)K^{*2} \ar[r] & L/_m(1+M_w)L^{*2} \\
 Q(K_v) \ar[u] \ar[r]  & Q(L_w) \ar[u]
}\end{equation}\end{linenomath}
and
\begin{linenomath}\begin{equation}\label{d3} \xymatrix{
Q(K)^* \ar[r] \ar[d] & Q(L)^* \ar[d] \\
\Gamma_v/2\Gamma_v \ar[r] & \Gamma_w/2\Gamma_w
}\end{equation}\end{linenomath}
commute. We are assuming here that $v,w$ are non-trivial.
\end{prop}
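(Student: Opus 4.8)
The plan is to unwind the various quotient-morphism and group-extension structures already built in Section 4 and check that $\alpha$ is compatible with all of them. For part (1), the key observation is that a hyperfield isomorphism $\alpha : Q(K) \to Q(L)$, viewed as a group isomorphism $K^*/K^{*2} \to L^*/L^{*2}$, carries the subgroup $(1+M_v)K^{*2}/K^{*2}$ isomorphically onto $(1+M_w)L^{*2}/L^{*2}$ by hypothesis. First I would invoke Proposition \ref{small detail}(3): the canonical maps $Q(K) \to K/_m(1+M_v)K^{*2}$ and $Q(L) \to L/_m(1+M_w)L^{*2}$ are quotient morphisms, with kernels (in the sense of the subgroup $\Delta$ collapsed to $1$) exactly $(1+M_v)K^{*2}/K^{*2}$ and $(1+M_w)L^{*2}/L^{*2}$. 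Since $\alpha$ matches these kernels, it factors through the quotient: the map $\bar\alpha : K/_m(1+M_v)K^{*2} \to L/_m(1+M_w)L^{*2}$ sending the class of $\overline{x}$ modulo $(1+M_v)K^{*2}$ to the class of $\alpha(\overline{x})$ modulo $(1+M_w)L^{*2}$ is well defined and bijective on the underlying groups. That it is a hyperfield morphism — i.e. respects the multivalued addition — follows because the addition on $K/_m(1+M_v)K^{*2}$ is the one induced from $Q(K)$ under the quotient morphism (by Proposition \ref{small detail}(1),(2) we are already in the ``primed'' regime, so no extra care is needed), and $\alpha$ respects addition on $Q(K)$; running the same argument for $\bar\alpha^{-1}$ gives that $\bar\alpha$ is an isomorphism. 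Commutativity of diagram \eqref{d1} is then immediate from the definition of $\bar\alpha$.

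For part (2), I would first produce the isomorphism $Q(K_v) \to Q(L_w)$. Recall from the discussion preceding Proposition \ref{group extension} the canonical isomorphisms $U_vK^{*2}/(1+M_v)K^{*2} \cong K_v^*/K_v^{*2}$ and similarly for $L$. By hypothesis $\alpha$ now matches both $(1+M_v)K^{*2}/K^{*2}$ with $(1+M_w)L^{*2}/L^{*2}$ and $U_vK^{*2}/K^{*2}$ with $U_wL^{*2}/L^{*2}$, hence it induces a group isomorphism $U_vK^{*2}/(1+M_v)K^{*2} \to U_wL^{*2}/(1+M_w)L^{*2}$, i.e. a group isomorphism $\beta : K_v^*/K_v^{*2} \to L_w^*/L_w^{*2}$. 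To see this $\beta$ is a hyperfield isomorphism $Q(K_v) \to Q(L_w)$, I would use Proposition \ref{group extension}: $\iota : Q(K_v) \hookrightarrow K/_m(1+M_v)K^{*2}$ is a group extension, so for $a \in K_v^*$, $a \ne -1$, we have $\iota(1+a) = 1 + \iota(a)$, and the corresponding statement holds in $L$; since $\bar\alpha$ from part (1) respects addition and carries $\iota(Q(K_v))$ onto $\iota(Q(L_w))$ (because it matches $U_vK^{*2}/(1+M_v)K^{*2}$ with $U_wL^{*2}/(1+M_w)L^{*2}$), $\beta$ must respect the addition $1 + a$ for all $a \ne -1$, which together with $\beta(-\overline 1) = -\overline 1$ (inherited from $\alpha$) is exactly what it means for $\beta$ to be a morphism of quadratic hyperfields; applying the same to $\beta^{-1}$ gives an isomorphism. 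Commutativity of \eqref{d2} is built into the construction of $\beta$ via the identification $\iota = \alpha^{-1}$ on units. For the group isomorphism $\Gamma_v/2\Gamma_v \to \Gamma_w/2\Gamma_w$, recall (from the Note after Proposition \ref{group extension}) that $K^*/U_vK^{*2} \cong \Gamma_v/2\Gamma_v$ and likewise for $L$; since $\alpha$ matches $U_vK^{*2}/K^{*2}$ with $U_wL^{*2}/L^{*2}$, it induces $K^*/U_vK^{*2} \to L^*/U_wL^{*2}$, i.e. the desired $\gamma : \Gamma_v/2\Gamma_v \to \Gamma_w/2\Gamma_w$, and commutativity of \eqref{d3} holds by construction since $Q(K)^* = K^*/K^{*2}$ surjects onto $K^*/U_vK^{*2}$.

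I expect the only real subtlety — as opposed to pure diagram-chasing — to be verifying that the induced bijections are genuine hyperfield morphisms and not merely group isomorphisms, that is, checking compatibility with the multivalued addition at each stage. The clean way to handle this uniformly is to push everything through $\bar\alpha$ from part (1): once $\bar\alpha$ is known to be a hyperfield isomorphism of $K/_m(1+M_v)K^{*2}$ with $L/_m(1+M_w)L^{*2}$, the statements about $Q(K_v) \to Q(L_w)$ follow because $\iota$ identifies $Q(K_v)$ with a sub-object of $K/_m(1+M_v)K^{*2}$ whose addition is determined (via the group-extension property $\iota(1+y) = 1+\iota(y)$) by that of the ambient hyperfield, and $\bar\alpha$ respects that ambient addition. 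So the load-bearing step is part (1)'s factorization of $\alpha$ through the quotient morphisms of Proposition \ref{small detail}(3); everything in part (2) is then a matter of restriction and passage to cokernels.
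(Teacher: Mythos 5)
Your proposal is correct and follows essentially the same route as the paper's proof: part~(1) factors $\alpha$ through the quotient morphisms of Proposition~\ref{small detail}(3), and part~(2) passes to the subgroup $U_vK^{*2}/(1+M_v)K^{*2}$ and the cokernel $K^*/U_vK^{*2}$, invoking Proposition~\ref{group extension} to see that the induced bijection on residue hyperfields is a genuine hyperfield morphism. The paper states these steps more tersely, but the underlying reasoning is identical to yours.
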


\begin{proof} (1) Since the image of $(1+M_v)K^{*2}/K^{*2}$ under $\alpha$ is $(1+M_w)L^{*2}/L^{*2}$, $\alpha$ induces a unique bijection $\overline{\alpha} : K/_m (1+M_v)K^{*2} \rightarrow L/_m (1+M_w)L^{*2}$ such that the diagram (\ref{d1}) commutes. Applying Proposition \ref{small detail} (3) one sees that $\overline{\alpha}$ is a hyperfield isomorphism. (2) By our hypothesis the image of $U_vK^{*2}/(1+M_v)K^{*2}$ under $\overline{\alpha}$ is $U_wL^{*2}/(1+M_w)L^{*2}$, so $\overline{\alpha}$ induces a bijection $\alpha' : Q(K_v) \rightarrow Q(L_w)$ such that the diagram (\ref{d2}) commutes. Applying Proposition \ref{group extension} one sees that $\alpha'$ is a hyperfield isomorphism. The last assertion is obvious.
\end{proof}

\section{Abhyankar valuations on function fields over global fields}

Suppose $K$ and $k$ are fields. We say $K$ is a \it function field \rm over $k$ if $K$ is a finitely generated field extension of $k$. If $\operatorname{trdeg}(K:k)=n$ we say $K$ is a \it function field in $n$ variables \rm over $k$. The \it field of constants \rm of $K$ over $k$ (i.e., the algebraic closure of $k$ in $K$) is a finite extension of $k$ \cite[Chapter 10, Proposition 3]{sl}. We do not require that $k$ is the field of constants of $K$ over $k$.
If $K$ is a function field over $k$ and $v$ is a valuation on $K$, the \it Abhyankar inequality \rm asserts that \begin{linenomath}\[\operatorname{trdeg}(K:k) \ge \operatorname{rk}_{\mathbb{Q}}(\Gamma_v/\Gamma_{v|k}) + \operatorname{trdeg}(K_v: k_{v|k}),\]\end{linenomath} where $v|k$ denotes the restriction of $v$ to $k$.  We will say the valuation $v$ is \it Abhyankar \rm (relative to $k$)  if \begin{linenomath}\[\operatorname{trdeg}(K:k) = \operatorname{rk}_{\mathbb{Q}}(\Gamma_v/\Gamma_{v|k}) + \operatorname{trdeg}(K_v: k_{v|k}).\]\end{linenomath}
In this case it is well known that $\Gamma_v/\Gamma_{v|k}$ is finitely generated
and $K_v$ is a function field over $k_{v|k}$. For a proof of these assertions see \cite[Corollary 26]{fvk}.

A \it global field \rm is a field which is either a number field, i.e., a finite extension of $\mathbb{Q}$, or a function field of transcendence degree $1$ over a finite field.

We are interested here in function fields over global fields, equivalently, function fields of transcendence degree $\ge 0$ over $\mathbb{Q}$ or function fields of transcendence degree $\ge 1$ over $\mathbb{F}_p$ for some prime $p$. If $K$ is any field we define the \it nominal transcendence degree \rm of $K$ to be
\begin{linenomath}\[\operatorname{ntd}(K) := \begin{cases} \operatorname{trdeg}(K:\mathbb{Q}) &\text{ if } \operatorname{char}(K) = 0 \\ \operatorname{trdeg}(K:\mathbb{F}_p)-1 &\text{ if } \operatorname{char}(K) = p\ne 0\end{cases}.\]\end{linenomath}
Thus, if $K$ is a function field over a global field $k$, then $\operatorname{ntd}(K) = \operatorname{trdeg}(K:k)$. In this situation, for any valuation $v$ of $K$,

\begin{linenomath}\[\operatorname{rk}_{\mathbb{Q}}(\Gamma_v) := \begin{cases} \operatorname{rk}_{\mathbb{Q}}(\Gamma_v/\Gamma_{v|k}) &\text{ if } v|k \text{ is trivial} \\ \operatorname{rk}_{\mathbb{Q}}(\Gamma_v/\Gamma_{v|k})+1 &\text{ if } v|k \text{ is discrete rank 1}\end{cases},\]\end{linenomath}
and
\begin{linenomath}\[\operatorname{ntd}(K_v) := \begin{cases} \operatorname{trdeg}(K_v:k_{v|k}) &\text{ if } v|k \text{ is trivial} \\ \operatorname{trdeg}(K_v:k_{v|k})-1 &\text{ if } v|k \text{ is discrete rank 1}\end{cases}.\]\end{linenomath}
It follows, for any valuation $v$ of $K$, the Abhyankar inequality implies \begin{linenomath}\[\operatorname{ntd}(K)\ge \operatorname{rk}_{\mathbb{Q}}(\Gamma_v)+\operatorname{ntd}(K_v),\]\end{linenomath} and $v$ is Abhyankar (relative to $k$) iff \begin{linenomath}\[\operatorname{ntd}(K)= \operatorname{rk}_{\mathbb{Q}}(\Gamma_v)+\operatorname{ntd}(K_v).\]\end{linenomath} Moreover, if $v$ is Abhyankar (relative to $k$) then \begin{linenomath}\[\Gamma_v \cong \mathbb{Z}\times \dots \times \mathbb{Z}\]\end{linenomath} (with $\operatorname{rk}_{\mathbb{Q}}(\Gamma_v)$ factors) and $K_v$ is either a function field over a global field (if  $\operatorname{ntd}(K_v)\ge 0$)  or a finite field (if  $\operatorname{ntd}(K_v)=-1$).

\section{Witt equivalence of function fields over global fields}

The main result in this section is Theorem \ref{main theorem 2} which explains how a Witt equivalence of function fields over global fields induces a natural bijection between Abhyankar valuations.

It is important to point out that the bijection between Abhyankar valuations of function fields over global fields is very special. In general, Witt equivalence of two fields does not imply any bijection between valuations whatsoever, as shown in the following simple example:

\begin{example}  Let $F=k((t))$, where $k$ is an algebraically closed field, $\operatorname{char} k \ne 2$. Denote by $v$ the natural valuation on $F$, i.e., 
$$v(\sum_{i=n}^\infty a_it^i) := \min\{ i : a_i \ne 0\} \text{ if } \sum_{i=n}^\infty a_it^i \ne 0.$$ 
The residue field of $(F,v)$ is $k$, the value group is $\mathbb{Z}$. Applying Proposition \ref{group extension}, we see that $Q(F)$ is a group extension of $Q(k) = \{ 0,1\}$ by a cyclic group of order $2$, so $Q(F) = \{ 0,1,p\}$, $p \in Q(F)\backslash Q(k)$, $a+0=a$, $1+1=p+p = \{ 0,1,p\}$, $1+p=\{ 1,p\}$, $a\cdot 0 = 0$, $1\cdot 1 = p\cdot p = 1$, $1\cdot p = p$. It is not difficult to check that exactly the same identities hold true for $Q(\mathbb{F}_5)$, so that $Q(F)\cong Q(\mathbb{F}_5)$ and thus $F \sim \mathbb{F}_5$. At the same time, $F$ has lots of non-trivial valuations, whereas $\mathbb{F}_5$ has only the trivial one.
\end{example}

We begin with some preliminary results.

\begin{lemma} \label{well known} Suppose $K$ is a function field over a global field. Then

(1) There are infinitely many discrete rank one Abhyankar valuations $v$ on $K$.

(2) The group $K^*/K^{*2}$ is infinite.

(3) For any $x\in K^*$, $\exists$ $y\in K^{*2}+xK^{*2}$, $y\notin K^{*2}\cup xK^{*2}$. If $\operatorname{char}(K)\ne 2$ or $x\notin K^{*2}$ one can choose $y\ne 0$.

(4) $B(K^{*2}) = K^*$.
\end{lemma}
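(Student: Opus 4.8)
The plan is to prove the four assertions more or less in the stated order, since each later one leans on the earlier ones and on general facts about global fields recalled in Section 6.

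For (1), I would argue that $K$ is a function field over a global field $k$, so $\operatorname{ntd}(K) = \operatorname{trdeg}(K:k) =: n \ge 0$. A discrete rank one Abhyankar valuation is exactly one with $\operatorname{rk}_{\mathbb{Q}}(\Gamma_v) = 1$ and $\operatorname{ntd}(K_v) = n-1$, i.e.\ $K_v$ again a function field over a global field of one less nominal transcendence degree (or a global field itself when $n=0$). When $n\ge 1$ one can take, for instance, the valuations attached to the (infinitely many) prime divisors of a transcendence basis, concretely by writing $K$ as a finite extension of $k(x_1,\dots,x_n)$ and pulling back the $p(x_1)$-adic valuations; by Abhyankar's inequality these are Abhyankar, and there are infinitely many since $k[x_1]$ has infinitely many primes. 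When $n=0$, $K$ itself is a global field, so it carries infinitely many discrete rank one (non-archimedean) valuations, all trivially Abhyankar relative to the prime field. I expect the only mild subtlety is checking that infinitely many of these pullbacks remain discrete rank one and Abhyankar; this is standard valuation theory (finite extensions multiply value groups by a bounded index, and residue degree stays finite), and I would cite \cite{fvk} or \cite{e}.

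For (2), I would take any $v$ as produced in (1): the cokernel of the group embedding $K_v^*/K_v^{*2} \hookrightarrow K^*/(1+M_v)K^{*2}$ is $\Gamma_v/2\Gamma_v \cong \mathbb{Z}/2\mathbb{Z}$ by the Note after Proposition \ref{group extension}, and iterating along a chain of residue fields down to a global field reduces the claim to: a global field has infinite square class group. That last fact is classical (a global field has infinitely many primes, and by weak approximation / the existence of elements with prescribed odd valuation at a chosen prime one produces infinitely many independent square classes). Alternatively, and more cheaply, I can just observe $K$ contains $k(x_1)$ or is a global field, and in either case already $k(x_1)^*/k(x_1)^{*2}$ is infinite (the classes of the distinct monic irreducibles), and $K^*/K^{*2}$ surjects onto a subgroup of finite index over it — actually I'd want $K^*/K^{*2}$ infinite directly, which follows since $[K^*:k(x_1)^*K^{*2}]$ considerations show the image is infinite. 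I would present whichever of these is shortest.

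For (3) and (4): (3) says every binary form $\langle 1, x\rangle$ has a value not in $\{K^{*2}, xK^{*2}\}$, i.e.\ $1+\overline{x}$ in $Q(K)$ is not contained in $\{\overline 1,\overline x\}$, equivalently $x$ (and likewise, after scaling, $-x$) is not $K^{*2}$-rigid; so (3) $\Rightarrow$ (4), since $B(K^{*2}) = K^*$ is precisely the statement that \emph{no} $x\in K^*$ is such that both $x$ and $-x$ are $K^{*2}$-rigid. So the real content is (3). Here I would use that $K$ has an Abhyankar valuation $v$ with residue field $K_v$ still infinite (e.g.\ again a function field over a global field, or a global field), so $Q(K_v)$ is "large": by Proposition \ref{properties}(3), for $K_v$ big enough $Q(K_v) = K_v/_m K_v^{*2}$ and $D_{K_v}\langle 1,y\rangle$ is genuinely big for appropriate $y$. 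The group extension $\iota: Q(K_v)\hookrightarrow K/_mT$, $T = (1+M_v)K^{*2}$, together with Proposition \ref{remark on basic part}(2), gives $B(T)/T \cong B(K_v^{*2})/K_v^{*2}$, and descending the chain of residue fields reduces $B(K^{*2}) = K^*$ to the same statement for a global field — and for a global field it is classical that $B(F^{*2}) = F^*$ (equivalently, no element is rigid), since for any $x\in F^*$ one can solve $z = x_0^2 + x\,y_0^2$ for a $z$ in a new square class using that $D_F\langle 1,x\rangle$ is large (Hasse–Minkowski / local–global, or a direct counting argument at a suitable finite place). Concretely, for $F$ a global field and $x\in F^*$, pick a finite place $\mathfrak{p}$ where $x$ is a unit and the residue field is large; then a residue-field computation plus Hensel shows $\langle 1,x\rangle$ represents elements in more than two square classes.

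The main obstacle I anticipate is (3)/(4) in characteristic $2$ and for small residue fields: the clean identity $b = (\tfrac{b+1}{2})^2 - (\tfrac{b-1}{2})^2$ used in Proposition \ref{properties}(3) is unavailable, and one must be careful that the Abhyankar valuation chosen has residue field \emph{not} a small finite field (which is exactly why the paper keeps excluding "residue field finite of characteristic $2$"). I would handle this by choosing the valuation $v$ in (1) so that $K_v$ is infinite — always possible when $\operatorname{ntd}(K)\ge 0$, since the bottom of the residue chain is a global field, which is infinite — and then invoking the known statement $B(F^{*2}) = F^*$ for infinite global fields $F$, pushing it up the tower via Proposition \ref{remark on basic part}(2). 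The parenthetical refinement in (3) (choosing $y\ne 0$ when $\operatorname{char}(K)\ne 2$ or $x\notin K^{*2}$) is then just bookkeeping: $0\in 1+x$ only when $\overline x = -\overline 1$, so as long as $x$ is not minus a square (automatic if $x\notin K^{*2}$ and $-1$... — more simply, when $\operatorname{char}K\ne 2$ one excludes the single square class $-K^{*2}$ by hand, and when $\operatorname{char}K = 2$ the class $-\overline 1 = \overline 1$ is excluded by the hypothesis $x\notin K^{*2}$).
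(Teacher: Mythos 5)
Parts (1), (2) and the deduction (3) $\Rightarrow$ (4) are fine and broadly match the paper. For (1) you pull back valuations attached to irreducibles of $K_0[x]$, exactly as the paper does. For (2) you go a slightly longer way (iterating down the residue chain to a global field, or invoking infinitude of square classes of a rational function subfield); the paper instead uses the approximation theorem directly on (1): choosing $x_1,\dots,x_n$ with $v_i(x_j)=\delta_{ij}$ for $n$ inequivalent discrete rank one valuations, the $2^n$ products $\prod x_i^{e_i}$ lie in distinct square classes. Either works. Your observation that (3) is exactly the statement that no $x\in K^*$ is $K^{*2}$-rigid, hence (4), is correct and is what the paper does.

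The genuine gap is in (3). You invoke Proposition \ref{remark on basic part}(2) to pass from $B(K_v^{*2})/K_v^{*2}$ to $B(T)/T$ with $T=(1+M_v)K^{*2}$, and then claim that "descending the chain of residue fields reduces $B(K^{*2})=K^*$ to the same statement for a global field." But this conflates $B(T)$ with $B(K^{*2})$, and there is no implication: these are rigidity statements relative to two different subgroups, with $K^{*2}\subsetneqq T$. In fact Theorem \ref{cases}(3) shows that for $\operatorname{ntd}(K_v)\ge 0$ the group $B(T)$ is $U_vK^{*2}$, which is a \emph{proper} subgroup of $K^*$; every $x\notin U_vK^{*2}$ (e.g. a uniformizer) is $T$-rigid and hence invisible to your descent, yet such $x$ must still be shown to be non-$K^{*2}$-rigid for (4). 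Moreover $T$-rigidity and $K^{*2}$-rigidity simply do not transfer to one another: enlarging the subgroup enlarges \emph{both} sides of the inclusion $T+Tx\subseteq T\cup Tx$, so no inclusion survives in either direction. The paper avoids this entirely by constructing $y$ directly, with a case split on the single element $x$: if there is a discrete rank one Abhyankar $v$ with $\operatorname{char}(K_v)\ne 2$ and $x\in(1+M_v)K^{*2}$, write $x=uc^2$ with $u\in 1+M_v$, take $\pi(z)=\pi(d)^2+\pi(e)^2\notin K_v^{*2}$ by induction (or directly for finite residue fields), and set $y=(cd)^2+xe^2$; if no such $v$ exists, then $x\notin(1+M_v)K^{*2}$ for two inequivalent discrete rank one valuations $v,w$, and one uses the approximation theorem to choose $a$ with $v(a^2)>v(x)$, $w(a^2)<w(x)$ and sets $y=a^2+x$. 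Your proposal does not address this second case at all. Finally, your handling of $\operatorname{char}(K)=2$ is left unresolved (you note the obstacle but do not dispatch it); the paper settles it in one line by taking $y=1+x$ when $x\notin K^{*2}$ and $y=0$ when $x\in K^{*2}$.
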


 All of this seems to be well-known. Anyway, here is a proof.

\begin{proof} (1) This is clear if $K$ is a number field. Otherwise, $\exists$ a subfield $K_0 \subseteq K$, $\operatorname{ntd}(K_0) = \operatorname{ntd}(K)-1$. Fix $x\in K$ transcendental over $K_0$. $K$ is a finite extension of $K_0(x)$. The principal ideal domain $K_0[x]$ has infinitely many irreducibles. Each irreducible $f$ of $K_0[x]$ defines a discrete rank one valuation $v_f$ on $K_0(x)$ with residue field $K_0[x]/(f)$. The valuation $v_f$  extends in some (possibly non-unique) way to a discrete rank one valuation on $K$ whose residue field is some finite extension of $K_0[x]/(f)$.  (2) is true for any field $K$ having infinitely many inequivalent discrete rank one valuations. Let $v_1,\dots,v_n$ be inequivalent discrete rank one valuations on $K$. Use the approximation theorem to produce $x_i\in K^*$, $i=1,\dots,n$ so that $v_i(x_j)=\delta_{ij}$ (Kronecker's delta), for $i,j=1,\dots,n$. Then the $2^n$ products $x_1^{e_1}\dots x_n^{e_n}$, $e_i \in \{ 0,1\}$, belong to distinct square classes. This proves $|K^*/K^{*2}|\ge 2^n$. Since $n$ is can be chosen to be any positive integer, the result follows. (3) Suppose first that $\operatorname{char}(K) = 2$. If $x\in  K^{*2}$ one can choose $y=0$. If $x\notin K^{*2}$ one can choose $y=1+x$. Suppose now that $\operatorname{char}(K)\ne 2$.
Let $v$ be a discrete rank one Abhyankar valuation on $K$ with $\operatorname{char}(K_v)\ne 2$. Suppose first that $x \in (1+M_v)K^{*2}$, say $x = uc^2$, $u\in 1+M_v$, $c\in K^*$. By induction on the transcendence degree, there exists $\pi(z) \in K_v^*$ and $\pi(d), \pi(e) \in K_v^*$ such that $\pi(z) = \pi(d)^2+\pi(e)^2$, $\pi(z) \notin K_v^{*2}$. Take $y = c^2(d^2+ue^2) =(cd)^2+xe^2$. Then $y\notin K^{*2}\cup xK^{*2}$. If such a valuation $v$ does not exist, then there exist inequivalent discrete rank one valuations $v,w$ on $K$ with $x \notin (1+M_v)K^{*2}$, $x\notin (1+M_w)K^{*2}$. In this case, use the approximation theorem to choose $a \in K^*$ so that $v(a^2)>v(x)$, $w(a^2)<w(x)$.
Define $y= a^2+x$. Then $y = x(1+\frac{a^2}{x}) \in x(1+M_v)K^{*2}$, so $y \notin K^{*2}$. Similarly,
$y= a^2(1+\frac{x}{a^2}) \in (1+M_w)K^{*2}$, so $y\notin xK^{*2}$.
(4) This is immediate from (3).
\end{proof}

\begin{theorem}\label{cases} Suppose $K$ is a function field over a global field and $v$ is an Abhyankar valuation on $K$. Then:

\smallskip

(1) $(K^*: U_vK^{*2}) = 2^{\operatorname{rk}_{\mathbb{Q}}(\Gamma_v)}$.

(2) $(U_vK^{*2}:(1+M_v)K^{*2}) = \begin{cases} \infty &\text{ if } \operatorname{ntd}(K_v) \ge 0 \\ 2 &\text{ if } K_v \text{ is finite, } \operatorname{char}(K_v) \ne 2 \\ 1 &\text{ if } K_v \text{ is finite, } \operatorname{char}(K_v) = 2 \end{cases}.$

\smallskip

(3) The basic part of $T := (1+M_v)K^{*2}$ is

\smallskip

\ \ \ \ $\begin{cases} U_vK^{*2} &\text{ if } \operatorname{ntd}(K_v)\ge 0 \\ \pm T = U_vK^{*2} &\text{ if } K_v \text{ is finite, } \operatorname{char}(K_v) \ne 2, -1 \notin K_v^{*2} \\ T &\text{ if } K_v \text{ is finite, } \operatorname{char}(K_v) \ne 2, -1 \in K_v^{*2} \\ T = U_vK^{*2} &\text{ if } K_v \text{ is finite, } \operatorname{char}(K_v) = 2 \end{cases}.$
\end{theorem}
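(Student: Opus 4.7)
The plan is to reduce each part of the theorem to a computation with $\Gamma_v/2\Gamma_v$, $K_v^*/K_v^{*2}$, or $B(K_v^{*2})$ via the standard identifications arising from the valuation and residue maps, and then to combine the Abhyankar hypothesis (which, as recorded at the end of Section~6, forces $\Gamma_v \cong \mathbb{Z}^r$ with $r = \operatorname{rk}_{\mathbb{Q}}(\Gamma_v)$ and makes $K_v$ either a function field over a global field or a finite field) with Lemma~\ref{well known} and Proposition~\ref{remark on basic part}.

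For part (1), the valuation map induces an isomorphism $K^*/U_vK^{*2} \cong \Gamma_v/2\Gamma_v$, and $\Gamma_v \cong \mathbb{Z}^r$ yields order $2^r$ on the right. For part (2), the residue map $\pi$ induces $U_vK^{*2}/(1+M_v)K^{*2} \cong K_v^*/K_v^{*2}$, and I then compute $|K_v^*/K_v^{*2}|$ in each sub-case: infinite when $\operatorname{ntd}(K_v) \ge 0$ by Lemma~\ref{well known}(2); of order $2$ when $K_v$ is finite of odd characteristic (since $K_v^*$ is cyclic of even order); and trivial when $K_v$ has characteristic $2$ by Frobenius surjectivity.

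For part (3), I first suppose $v$ is non-trivial, so Proposition~\ref{remark on basic part}(2) gives the group isomorphism $B(K_v^{*2})/K_v^{*2} \cong B(T)/T$; it then suffices to identify $B(K_v^{*2})$. When $\operatorname{ntd}(K_v) \ge 0$, Lemma~\ref{well known}(4) yields $B(K_v^{*2}) = K_v^*$, hence $B(T) = U_vK^{*2}$. When $K_v$ is finite of characteristic $2$, the equality $K_v^* = K_v^{*2}$ collapses $T$ and $U_vK^{*2}$, so $B(T) = T = U_vK^{*2}$ is automatic. The genuine computation is for $F := K_v$ finite of odd characteristic: since $\pm F^{*2} \subseteq B(F^{*2})$ always and $F^* = F^{*2} \sqcup aF^{*2}$ for a non-square $a$, the sole obstruction to $F^{*2}$-rigidity of $a$ is the presence of $0$ in $F^{*2} + F^{*2}a$, which happens precisely when $-a \in F^{*2}$, that is, when $-1 \notin F^{*2}$. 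Hence $B(F^{*2}) = F^*$ if $-1 \notin F^{*2}$ and $B(F^{*2}) = F^{*2}$ otherwise, which transports back to $B(T) = U_vK^{*2}$ or $B(T) = T$ respectively; in the first sub-case, $-1 \in U_v$ but $-1 \notin T$ (since $\pi(-1) = -1 \notin K_v^{*2}$), so $[-1]$ generates the order-two group $U_vK^{*2}/T$ from part (2), giving $U_vK^{*2} = T \cup -T = \pm T$. The trivial-valuation case is dispatched directly by applying Lemma~\ref{well known}(4) to $K$ itself, since then $T = K^{*2}$, $U_vK^{*2} = K^*$, and $\operatorname{ntd}(K_v) = \operatorname{ntd}(K) \ge 0$.

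I foresee no serious obstacle; the most delicate step is the finite-field rigidity analysis for odd characteristic, and even there the argument reduces to the elementary observation that, with only two square classes available in $F$, the sole way for rigidity to fail is for $0$ to appear in the relevant sum set, reducing everything to the parity of $-1$ in $F^*/F^{*2}$.
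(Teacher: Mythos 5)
Your proof is correct and follows essentially the same route as the paper: part (1) from $K^*/U_vK^{*2}\cong\Gamma_v/2\Gamma_v$ together with $\Gamma_v\cong\mathbb{Z}^r$, part (2) from $U_vK^{*2}/(1+M_v)K^{*2}\cong K_v^*/K_v^{*2}$ and a case-by-case count, and part (3) from Proposition~\ref{remark on basic part}(2) reducing everything to computing $B(K_v^{*2})$. You are slightly more careful than the paper on two points — you spell out the elementary rigidity computation in the two-square-class finite field (the paper asserts $B(K_v^{*2})=\pm K_v^{*2}$ or $K_v^{*2}$ without detail), and you explicitly dispatch the trivial-valuation case, which the paper silently folds in even though Proposition~\ref{remark on basic part}(2) is stated only for non-trivial $v$ — but these are expansions of the same argument, not a different one.
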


\begin{proof} (1) is immediate from the isomorphism $K^*/U_vK^{*2} \cong \Gamma_v/2\Gamma_v$. For (2) and (3) one uses the isomorphism  $U_vK^{*2}/(1+M_v)K^{*2} \cong K_v^*/K_v^{*2}$ described in Section 4.  The assertion in (2) in the case $\operatorname{ntd}(K_v)\ge 0$  follows from Lemma \ref{well known} (2) applied to the field $K_v$. The assertions in (2) in the cases where $K_v$ is a finite field are clear. For assertion (3), we apply Proposition \ref{remark on basic part} (2). If $\operatorname{ntd}(K_v)\ge 0$ then $B(K_v^{*2}) = K_v^*$, by Lemma \ref{well known}, so $B(T)=U_vK^{*2}$.
Suppose now that $K_v$ is finite. If $\operatorname{char}(K_v)=2$ then $K_v^* = K_v^{*2}$ so $B(T) = T = U_vK^{*2}$. If $\operatorname{char}(K_v)  \ne 2$, $-1 \notin K_v^{*2}$, then $B(K_v^{*2}) = \pm K_v^{*2} = K_v^*$, so $B(T) = \pm T = U_vK^{*2}$. Finally, if $\operatorname{char}(K_v)  \ne 2$, $-1 \in K_v^{*2}$, then $B(K_v^{*2}) = K_v^{*2}$, so $B(T) = T$.
\end{proof}

\begin{lemma} \label{ntd estimate} Suppose $K$ is a function field over a global field, $L$ is a field, and $\alpha : Q(K) \rightarrow Q(L)$ is a hyperfield isomorphism. Then $\operatorname{ntd}(L) \ge \operatorname{ntd}(K)$.
\end{lemma}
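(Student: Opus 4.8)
The plan is to exhibit a valuation $w$ on $L$ whose value group has large rational rank, then invoke the Abhyankar inequality on $L$ — which requires knowing that $L$ is itself a function field over a global field, or at least that such an inequality is available. The cleanest route is to first reduce to the situation where we already know $L$ is a function field over a global field: since $L^*/L^{*2} \cong K^*/K^{*2}$ is infinite by Lemma \ref{well known}(2), $L$ is not finite; and one should check (this is presumably done elsewhere in the paper, or follows from the Witt-ring structure) that Witt equivalence with a function field over a global field forces $L$ to be of the same general type. Granting that, write $n := \operatorname{ntd}(K)$ and pick, using Lemma \ref{well known}(1), a chain of discrete rank one Abhyankar valuations — more precisely, pick an Abhyankar valuation $v$ on $K$ with $\operatorname{rk}_{\mathbb{Q}}(\Gamma_v) = n$ and $\operatorname{ntd}(K_v) = 0$ (for instance a composite of $n$ successive discrete rank one Abhyankar valuations, or a divisorial valuation coming from a tower of function-field extensions). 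Then $\Gamma_v \cong \mathbb{Z}^n$ by the discussion in Section 6, so $(K^* : U_vK^{*2}) = 2^n$ by Theorem \ref{cases}(1), and since $\operatorname{ntd}(K_v) = 0$ we are in case (i) of Theorem \ref{main lemma}: the basic part of $T := (1+M_v)K^{*2}$ is $U_vK^{*2}$ and $T$ is unexceptional (Theorem \ref{cases}(3), first case).

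Next I would apply Theorem \ref{main lemma} to $\alpha$ and this $v$. It produces a valuation $w$ on $L$ with $(L^* : U_wL^{*2}) \ge (K^* : U_vK^{*2}) = 2^n$, hence $|\Gamma_w/2\Gamma_w| = (L^* : U_wL^{*2}) \ge 2^n$ via the canonical isomorphism $L^*/U_wL^{*2} \cong \Gamma_w/2\Gamma_w$. Since $\Gamma_w$ is torsion free, Lemma \ref{rank estimate} gives $\operatorname{rk}_{\mathbb{Q}}(\Gamma_w) \ge n$.

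Finally I would feed this back through the Abhyankar inequality for $L$ over its global constant field $\ell$. Writing $\operatorname{ntd}(L)$ in terms of $\operatorname{trdeg}(L:\ell)$ as in Section 6, and using $\operatorname{rk}_{\mathbb{Q}}(\Gamma_w) \ge \operatorname{rk}_{\mathbb{Q}}(\Gamma_w/\Gamma_{w|\ell})$ together with $\operatorname{ntd}(L_w) \ge -1$, the inequality $\operatorname{ntd}(L) \ge \operatorname{rk}_{\mathbb{Q}}(\Gamma_w) + \operatorname{ntd}(L_w)$ — wait, this Abhyankar bound only holds for Abhyankar $w$; in general one only has $\operatorname{trdeg}(L:\ell) \ge \operatorname{rk}_{\mathbb{Q}}(\Gamma_w/\Gamma_{w|\ell}) + \operatorname{trdeg}(L_w:\ell_{w|\ell})$, which still gives $\operatorname{trdeg}(L:\ell) \ge \operatorname{rk}_{\mathbb{Q}}(\Gamma_w/\Gamma_{w|\ell})$. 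So I must arrange that the rank of $\Gamma_w$ is realized by $\Gamma_w/\Gamma_{w|\ell}$ up to the correction built into the definition of $\operatorname{ntd}$: if $w|\ell$ is trivial then $\operatorname{rk}_{\mathbb{Q}}(\Gamma_w) = \operatorname{rk}_{\mathbb{Q}}(\Gamma_w/\Gamma_{w|\ell}) \le \operatorname{trdeg}(L:\ell) = \operatorname{ntd}(L)$ when $\operatorname{char}=0$, and $= \operatorname{trdeg}(L:\mathbb{F}_p)-1 = \operatorname{ntd}(L)$ accounting for the constant-field transcendence in positive characteristic; if $w|\ell$ is discrete rank one, the definition of $\operatorname{rk}_{\mathbb{Q}}(\Gamma_w)$ in Section 6 already adds the $+1$, so again $\operatorname{rk}_{\mathbb{Q}}(\Gamma_w) \le \operatorname{ntd}(L)$. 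Either way $\operatorname{ntd}(L) \ge n = \operatorname{ntd}(K)$, as desired.

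The main obstacle is the very first reduction: knowing a priori that $L$ is a function field over a global field, so that the Abhyankar inequality and the Section 6 bookkeeping for $\operatorname{ntd}$ are legitimately available on the $L$ side. If that is not yet established, one needs either a direct argument that a field Witt equivalent to a function field over a global field is again of that type, or — better — a formulation of the argument that extracts $\operatorname{rk}_{\mathbb{Q}}(\Gamma_w)$-many algebraically independent elements of $L$ directly from the valuation $w$, bounding $\operatorname{trdeg}$ of $L$ over its prime field without presupposing the function-field structure. Everything else is routine bookkeeping with the isomorphisms already recorded in Sections 4–6.
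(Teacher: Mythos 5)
Your skeleton is close to the paper's argument -- pick an Abhyankar $v$ on $K$ with $\operatorname{rk}_{\mathbb{Q}}(\Gamma_v)=n$, feed it through Theorem \ref{main lemma} to get $w$ on $L$ with $\operatorname{rk}_{\mathbb{Q}}(\Gamma_w)\ge n$ via Lemma \ref{rank estimate}, then finish with the Abhyankar inequality -- but there is a real gap at the end, and your "main obstacle" is a red herring.

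First, the non-issue. You do not need $L$ to be a function field over a global field. The paper simply applies the Abhyankar inequality to $L$ over its prime field ($\mathbb{Q}$ or $\mathbb{F}_p$), which is always available for an arbitrary field $L$; if $\operatorname{trdeg}(L:\text{prime field})$ is infinite the lemma is vacuous, and otherwise the inequality holds. Your parenthetical suggestion at the end of your proposal (bound $\operatorname{trdeg}$ of $L$ over its prime field directly) is exactly what the paper does, so the worry that started your argument should just be dropped.

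The genuine gap is in the final bookkeeping. After obtaining $\operatorname{rk}_{\mathbb{Q}}(\Gamma_w)\ge n$, the Abhyankar inequality only gives $\operatorname{ntd}(L)\ge\operatorname{rk}_{\mathbb{Q}}(\Gamma_w)+\operatorname{ntd}(L_w)$ with $\operatorname{ntd}(L_w)\ge -1$, which yields the insufficient bound $\operatorname{ntd}(L)\ge n-1$ in the cases where $w$ restricted to the constant/prime field is nontrivial (and you silently assert $\operatorname{rk}_{\mathbb{Q}}(\Gamma_w)\le\operatorname{ntd}(L)$ in exactly those cases without justification). What closes the gap in the paper is an extra step you omitted: because $\operatorname{ntd}(K_v)=0$ puts you in case (i) of Theorem \ref{main lemma}, $\alpha$ in fact maps $U_vK^{*2}/K^{*2}$ onto $U_wL^{*2}/L^{*2}$, so Proposition \ref{trivial observation}(2) gives a hyperfield isomorphism $Q(K_v)\cong Q(L_w)$. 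Since $K_v$ is a global field, Lemma \ref{well known}(2) makes $K_v^*/K_v^{*2}\cong L_w^*/L_w^{*2}$ infinite, hence $L_w$ is infinite and $\operatorname{trdeg}(L_w:\mathbb{F}_p)\ge 1$ whenever $\operatorname{char}(L_w)=p\ne 0$. That extra $+1$ in the residue field is precisely what pushes the Abhyankar estimate from $n-1$ up to $n$ in the remaining cases, and your proposal never produces it.
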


\begin{proof} Let $n := \operatorname{ntd}(K)$. Pick any Abhyankar valuation $v$ on $K$ with $\operatorname{rk}_{\mathbb{Q}}(\Gamma_v) = n$, i.e., $K_v$ is a global field. Choose $w$ as in Theorem \ref{main lemma}. By Proposition \ref{trivial observation}, $K_v \sim L_w$. By Lemma \ref{well known} (2), $K_v^*/K_v^{*2} \cong L_w^*/L_w^{*2}$ is an infinite group, so if $\operatorname{char}(L_w) =p$, $p\ne 0$, then $\operatorname{trdeg}(L_w:\mathbb{F}_p) \ge 1$. By Theorem \ref{main lemma}, $|\Gamma_w/2\Gamma_w| \ge 2^n$, so, by Lemma \ref{rank estimate}, $\operatorname{rk}_{\mathbb{Q}}(\Gamma_w)) \ge n$. The result follows from these two facts and the Abhyankar inequality. In more detail, if $\operatorname{char}(L) = p \ne 0$, then $w$ restricted to $\mathbb{F}_p$ is trivial and $\operatorname{trdeg}(L:\mathbb{F}_p) \ge \operatorname{rk}_{\mathbb{Q}}(\Gamma_w) +\operatorname{trdeg}(L_w: \mathbb{F}_p) \ge n+1$. Similarly, if $\operatorname{char}(L) = 0$, then $\operatorname{trdeg}(L:\mathbb{Q}) \ge n+0 =n$ or $(n-1)+1 =n$, depending on whether $w|_{\mathbb{Q}}$ is trivial or $p$-adic.
\end{proof}

\begin{theorem}\label{main theorem 2} Suppose $K,L$ are function fields over global fields and $\alpha : Q(K) \rightarrow Q(L)$ is a hyperfield isomorphism. Then:

(1) $\operatorname{ntd}(K) = \operatorname{ntd}(L)$.

(2) %There is a one-to-one correspondence between Abhyankar valuations $v$ of $K$ such that $K_v$ is not finite of characteristic $2$ and Abhyankar valuations $w$ of $L$ such that $L_w$ is not finite of characteristic $2$ defined by $v \leftrightarrow w$ iff $\alpha$ maps $(1+M_v)K^{*2}/K^{*2}$ onto $(1+M_w)L^{*2}/L^{*2}$.
For each Abhyankar valuation $v$ of $K$ with $K_v$ not finite of characteristic $2$ there exists a unique Abhyankar valuation $w$ of $L$ such that $\alpha$ maps $(1+M_v)K^{*2}/K^{*2}$ onto $(1+M_w)L^{*2}/L^{*2}$. $L_w$ is also not finite of characteristic $2$, $\operatorname{rk}_{\mathbb{Q}}(\Gamma_v) = \operatorname{rk}_{\mathbb{Q}}(\Gamma_w)$ and $\operatorname{ntd}(K_v) = \operatorname{ntd}(L_w)$.  %There is exactly one such $w$ except possibly when $K_v$ is a finite field, $\operatorname{char}(K_v)= 2$.

(3)  $\alpha$ maps $U_vK^{*2}/K^{*2}$ onto $U_wL^{*2}/L^{*2}$ except possibly when $K_v$ is finite, $\operatorname{char}(K_v) \ne 2$ and $-1 \in K_v^{*2}$.

(4) For $v,w$ non-trivial, $\alpha$ induces a hyperfield isomorphism $K/_m (1+M_v)K^{*2} \rightarrow L/_m (1+M_w)L^{*2}$ such that diagram (\ref{d1}) commutes. If, in addition, $\alpha$ maps $U_vK^{*2}/K^{*2}$ onto $U_wL^{*2}/L^{*2}$ then $\alpha$ induces a hyperfield isomorphism $Q(K_v) \rightarrow Q(L_w)$ and a group isomorphism $\Gamma_v/2\Gamma_v \rightarrow \Gamma_w/2\Gamma_w$ such that diagrams (\ref{d2}) and (\ref{d3}) commute.

(5) If $v$ corresponds to $w$ and $v'$ corresponds to $w'$ then $v'$ is coarser than $v$ iff $w'$ is coarser than $w$.
\end{theorem}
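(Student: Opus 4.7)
The plan is as follows. Part (1) is immediate from Lemma \ref{ntd estimate} applied to $\alpha$ and to $\alpha^{-1}$. For part (2), I would fix an Abhyankar valuation $v$ of $K$ with $K_v$ not finite of characteristic $2$ and split into three subcases according to Theorem \ref{cases}(3): (a) $\operatorname{ntd}(K_v)\ge 0$; (b) $K_v$ finite of odd characteristic with $-1\notin K_v^{*2}$; (c) $K_v$ finite of odd characteristic with $-1\in K_v^{*2}$. In (a) and (b), $T:=(1+M_v)K^{*2}$ satisfies the hypotheses of case (i) of Theorem \ref{main lemma}: $B(T)=U_vK^{*2}$ has index $2^r$, where $r:=\operatorname{rk}_{\mathbb{Q}}(\Gamma_v)$, and $T$ is unexceptional---in (b), because $T$ is not additively closed. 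In (c), $T$ satisfies case (ii): $B(T)=T$ has index $2$ in $U_vK^{*2}$. Theorem \ref{main lemma} then produces a valuation $w$ on $L$ with $\alpha((1+M_v)K^{*2}/K^{*2})=(1+M_w)L^{*2}/L^{*2}$ and $(L^*:U_wL^{*2})\ge 2^r$, and in (a) and (b) also $\alpha(U_vK^{*2}/K^{*2})=U_wL^{*2}/L^{*2}$, which immediately yields part (3).

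To verify that $w$ is Abhyankar with the claimed invariants, I would combine Lemma \ref{rank estimate} (yielding $\operatorname{rk}_{\mathbb{Q}}(\Gamma_w)\ge r$), part (1) (yielding $\operatorname{ntd}(L)=\operatorname{ntd}(K)=r+\operatorname{ntd}(K_v)$), and the Abhyankar inequality $\operatorname{ntd}(L)\ge\operatorname{rk}_{\mathbb{Q}}(\Gamma_w)+\operatorname{ntd}(L_w)$ to conclude $\operatorname{ntd}(L_w)\le\operatorname{ntd}(K_v)$. The complementary inequality $\operatorname{ntd}(L_w)\ge\operatorname{ntd}(K_v)$ holds in (a) by applying Lemma \ref{ntd estimate} to the isomorphism $Q(K_v)\cong Q(L_w)$ that comes from Proposition \ref{trivial observation}(2), and in (b) and (c) trivially since $\operatorname{ntd}(K_v)=-1$ while $\operatorname{ntd}(L_w)\ge -1$ by the Abhyankar inequality applied to $L$. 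The resulting chain collapses to equalities, forcing $w$ to be Abhyankar with $\operatorname{rk}_{\mathbb{Q}}(\Gamma_w)=r$ and $\operatorname{ntd}(L_w)=\operatorname{ntd}(K_v)$. In (c), the group-index identity $|L^*/(1+M_w)L^{*2}|=2^{r+1}=|\Gamma_w/2\Gamma_w|\cdot|L_w^*/L_w^{*2}|$ then pins $|L_w^*/L_w^{*2}|=2$, so the finite field $L_w$ has odd characteristic. Uniqueness of $w$ follows from Lemma \ref{technical lemma}: two Abhyankar candidates on $L$ sharing the same $(1+M)L^{*2}$ would be comparable, and a proper coarsening would strictly lower the rational rank below $r$, contradicting $\operatorname{rk}_{\mathbb{Q}}(\Gamma_{w'})=r$ established for each.

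Part (4) is a direct application of Proposition \ref{trivial observation}. For part (5), I would observe that $v\preceq v'$ gives $(1+M_v)K^{*2}\supseteq(1+M_{v'})K^{*2}$, which $\alpha$ transports to $(1+M_w)L^{*2}\supseteq(1+M_{w'})L^{*2}$; Lemma \ref{technical lemma} forces $w,w'$ to be comparable, and the opposite orientation $w'\preceq w$ would collapse the inclusion to an equality, hence $v=v'$ by the injectivity of the correspondence established in (2)---a contradiction whenever $v\ne v'$. The reverse implication follows by applying the same argument to $\alpha^{-1}$. The step I expect to require the most care is subcase (c) of part (2), where Theorem \ref{main lemma} delivers only the weaker conclusion about $U_wL^{*2}$ and the structure of $L_w$ must be recovered from a group-index balance rather than from a direct Witt equivalence; moreover, the assertion that $L_w$ is genuinely a finite field (and not, e.g., some infinite algebraic extension of $\mathbb{F}_p$ with $|L_w^*/L_w^{*2}|=2$) is only available because $w$ simultaneously turns out to be Abhyankar, so the existence of $w$ and the verification of its invariants must be carried out in one interlocking argument.
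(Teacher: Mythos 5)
Your proposal follows the same overall strategy as the paper's proof and reaches the same conclusions, so I'll only flag a few places where the exposition is looser than the paper's and needs tightening.

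The subcase analysis (a)/(b)/(c) for part (2), using Theorem \ref{cases}(3) to verify hypotheses (i) or (ii) of Theorem \ref{main lemma}, is exactly what the paper has in mind when it says ``the valuation $w$ exists by Theorem \ref{main lemma},'' and making this explicit is an improvement. In subcase (b), however, your justification that $T$ is unexceptional is incomplete as stated: with $B(T)=\pm T$, unexceptionality requires \emph{both} $-1\notin T$ \emph{and} $T$ not additively closed. You only address the second condition; the first is automatic from $-1\notin K_v^{*2}$ (via $U_vK^{*2}/T\cong K_v^*/K_v^{*2}$), but it should be said. (Also, for ``$T$ not additively closed'' the cleanest route is Proposition \ref{remark on basic part}(2), reducing to $K_v^{*2}$ not additively closed, which holds because every element of a finite field is a sum of two squares.)

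Your uniqueness argument is the one place with a genuine gap. You write that a proper coarsening would ``contradict $\operatorname{rk}_{\mathbb{Q}}(\Gamma_{w'})=r$ established for each,'' but for an \emph{arbitrary} Abhyankar $w'$ on $L$ with $(1+M_{w'})L^{*2}=(1+M_w)L^{*2}$ this equality has not been established; it has only been shown for the particular $w$ produced by Theorem \ref{main lemma}. The paper closes this by a short argument using Theorem \ref{cases}: if the common group $(1+M_w)L^{*2}$ has infinite index in $L^*$, its basic part (intrinsic to the group) equals both $U_wL^{*2}$ and $U_{w'}L^{*2}$, hence these coincide and the rational ranks agree; if it has finite index, then $L_w,L_{w'}$ are both finite, so $\operatorname{ntd}(L_w)=\operatorname{ntd}(L_{w'})=-1$ and Abhyankarity forces $\operatorname{rk}_{\mathbb{Q}}(\Gamma_w)=\operatorname{rk}_{\mathbb{Q}}(\Gamma_{w'})=\operatorname{ntd}(L)+1$. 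Only then does comparability (Lemma \ref{technical lemma}) plus equal rational rank force $w=w'$. You should insert this step. Parts (3), (4), (5) and the case-(c) index computation are fine and match the paper's reasoning, with (5) taking a slight detour through the bijection rather than directly through uniqueness of $w$, but that detour is harmless.
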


Note: One can show that $Q(K_v) \cong Q(L_w)$ as hyperfields, and $\Gamma_v/2\Gamma_v \cong \Gamma_w/2\Gamma_w$ as groups, even in the case where $\alpha$ does not map $U_vK^{*2}/K^{*2}$ onto $U_wL^{*2}/L^{*2}$.

\begin{proof} (1) follows from Lemma \ref{ntd estimate} and the symmetry of the hypothesis. (2) Let $n = \operatorname{ntd}(K) = \operatorname{ntd}(L)$. Suppose $w,w'$ are Abhyankar valuations on $L$ and $(1+M_{w'})L^{*2} = (1+M_w)L^{*2}$.
If these groups have infinite index, then the basic parts of these groups are the same, i.e., $U_{w'}L^{*2}= U_wL^{*2}$, i.e., $\operatorname{rk}_{\mathbb{Q}}(\Gamma_{w'})= \operatorname{rk}_{\mathbb{Q}}(\Gamma_{w})$, by Theorem \ref{cases}.
If these groups have finite index then $\operatorname{rk}_{\mathbb{Q}}(\Gamma_{w'}) = n+1 = \operatorname{rk}_{\mathbb{Q}}(\Gamma_w)$, again by Theorem \ref{cases}. Since we already know, by Lemma \ref{technical lemma}, that $w$ and $w'$ are comparable, this proves $w'=w$. This proves the uniqueness of $w$. %The correspondence $v \leftrightarrow w$ is defined using Theorem \ref{main lemma}.
Suppose now that $v$ is an Abhyankar valuation of $K$, $K_v$ not finite of characteristic $2$. The valuation $w$ exists by Theorem \ref{main lemma}. Let $r =\operatorname{rk}_{\mathbb{Q}}(\Gamma_v)$, $s= \operatorname{ntd}(K_v)$.
If $s\ge 0$ then $\alpha$ induces an isomorphism $Q(K_v) \rightarrow Q(L_w)$, by Proposition \ref{trivial observation}, so $\operatorname{ntd}(L_w)\ge s$,  by Lemma \ref{ntd estimate}. If $s=-1$ then $\operatorname{ntd}(L_w)\ge s$ holds trivially.  Also, $|\Gamma_w/2\Gamma_w| \ge |\Gamma_v/2\Gamma_v| = 2^r$, so $\operatorname{rk}_{\mathbb{Q}}(\Gamma_w) \ge r$. Thus \begin{linenomath}\[\operatorname{ntd}(L) \ge \operatorname{rk}_{\mathbb{Q}}(\Gamma_w)+\operatorname{ntd}(L_w) \ge r+s = \operatorname{ntd}(K)= \operatorname{ntd}(L),\]\end{linenomath} so $w$ is Abhyankar, $\operatorname{rk}_{\mathbb{Q}}(\Gamma_w) = r$, and $\operatorname{ntd}(L_w)=s$. In particular, $(U_wL^{*2}:(1+M_w)L^{*2})\ge 2$, so $L_w$ is not finite of characteristic $2$.
This proves (2). (3) and (4) are straightforward.
(5) Suppose now that $v \leftrightarrow w$, $v' \leftrightarrow w'$, $v \preceq v'$. Then $(1+M_{v'})K^{*2} \subseteq (1+M_v)K^{*2}$, so $(1+M_{w'})L^{*2} \subseteq (1+M_w)L^{*2}$.  By Lemma \ref{technical lemma}, $w$ and $w'$ are comparable. If $w' \preceq w$ then $(1+M_w)L^{*2} \subseteq (1+M_{w'})L^{*2}$ so $(1+M_{w'})L^{*2} = (1+M_w)L^{*2}$. We already know that $w=w'$ holds in this case. Thus $w \preceq w'$ holds in any case.
This proves (5).
\end{proof}

The next two lemmas allow one to distinguish the characteristic $2$ case from the characteristic $\ne 2$ case. Denote by $\overline{t} \in K^*/K^{*2}$ the image of $t\in K^*$.

\begin{lemma} \label{characteristic 2} Suppose $K$ is a field, $\operatorname{char}(K)=2$, $\overline{x},\overline{y}\in K^*/K^{*2}$, $\overline{x},\overline{y} \ne 1$ and $\overline{y} \in D_K\langle 1,\overline{x}\rangle$. Then $D_K\langle 1,\overline{y}\rangle = D_K\langle 1,\overline{x}\rangle$.
\end{lemma}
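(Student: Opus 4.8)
The idea is to exploit the additive structure of a field of characteristic $2$, where the Frobenius $a \mapsto a^2$ is a ring homomorphism, so that $K^{*2}$ is a subfield and the value set $D_K\langle 1,\overline x\rangle$ is (up to the zero element) exactly the multiplicative group of the subfield $K^{*2}(x) = K^{*2} + K^{*2}x$ of $K$. The plan is first to make this identification precise, then to show that $\overline y \in D_K\langle 1,\overline x\rangle$ forces the two subfields $K^{*2}+K^{*2}x$ and $K^{*2}+K^{*2}y$ to coincide, from which the claimed equality of value sets is immediate.

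First I would unwind the definitions. Since $\operatorname{char}(K) = 2$, for any $a,b \in K^*$ we have $-ab = ab$, and $ab \in K^{*2}$ iff $b \in aK^{*2}$; when $\overline x \ne 1$ (i.e.\ $x \notin K^{*2}$) the first case in the definition of $D_K\langle a,b\rangle$ does not occur, so $D_K\langle 1,x\rangle = \{ z \in K^* : z = u^2 + v^2 x,\ u,v \in K\} = (K^{*2} + K^{*2}x) \setminus \{0\}$. Here $E_x := K^{*2} + K^{*2}x$ is an $K^{*2}$-subspace of $K$; I would check it is in fact a subfield: it is closed under multiplication because $(u_1^2 + v_1^2 x)(u_2^2 + v_2^2 x) = (u_1 u_2)^2 + (u_1 v_2 + u_2 v_1 + v_1 v_2)^2 x + (v_1 v_2)^2 x^2$, and $x^2 \in K^{*2} \subseteq E_x$ so the right-hand side lies in $E_x$; alternatively $E_x = K^{*2}(x)$ since $[K^{*2}(x):K^{*2}]\le 2$. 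Thus $D_K\langle 1,\overline x\rangle$ is the image in $K^*/K^{*2}$ of the group $E_x^* = E_x \setminus\{0\}$.

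Now suppose $\overline y \in D_K\langle 1,\overline x\rangle$ with $\overline y \ne 1$, i.e.\ $y \in E_x^*$ (after adjusting $y$ within its square class, which does not change anything since $K^{*2} \subseteq E_x$). Then $K^{*2}(y) \subseteq K^{*2}(x)$, i.e.\ $E_y \subseteq E_x$. Since $y \notin K^{*2}$, we have $[E_y : K^{*2}] = 2 = [E_x : K^{*2}]$, so $E_y = E_x$. Therefore $D_K\langle 1,\overline y\rangle$, which is the image of $E_y^*$ in $K^*/K^{*2}$, equals the image of $E_x^*$, which is $D_K\langle 1,\overline x\rangle$.

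The only mild subtlety — the step I would be most careful about — is the reduction "adjust $y$ within its square class": one needs that $D_K\langle 1,\overline x\rangle$ really is a set of square classes (clear from its definition as a subset of $K^*/K^{*2}$, or from Proposition \ref{properties}(2) together with the fact that $\overline x \ne -\overline 1 = \overline 1$ forces it to be a subgroup), and that replacing $y$ by $yc^2$ replaces $E_y$ by $c^2 E_y$, which has the same image in $K^*/K^{*2}$; since some representative of $\overline y$ lies in $E_x$, we may as well take $y \in E_x$ itself. With that in hand the argument is the short chain of subfield inclusions above. Everything else — the identity for closure under multiplication, the degree count $[K^{*2}(x):K^{*2}] \le 2$ — is routine.
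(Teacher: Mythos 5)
Your proof is correct and takes a genuinely different route from the paper's. The paper argues by direct substitution: writing $y = a^2+b^2x$ and $z = c^2+d^2y$, it expands using the Frobenius identity in characteristic~$2$ to get $z = (c+ad)^2 + (bd)^2 x$, giving one inclusion, and then gets the other inclusion by the symmetry $\overline{y}\in D_K\langle 1,\overline{x}\rangle \Leftrightarrow \overline{x}\in D_K\langle 1,\overline{y}\rangle$. You instead observe that in characteristic~$2$ the set $K^2 := K^{*2}\cup\{0\}$ is a subfield, identify $D_K\langle 1,x\rangle$ with $K^2(x)^*$ where $K^2(x)=K^2+K^2x$ is an extension of degree at most $2$, and then derive equality $K^2(y)=K^2(x)$ from the containment $K^2(y)\subseteq K^2(x)$ plus the degree count $[K^2(y):K^2]=2=[K^2(x):K^2]$. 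The degree argument neatly dispenses with the need to prove the reverse inclusion separately, and it isolates the conceptual reason the lemma works. Two small points of tidiness: you should write $K^2$ rather than $K^{*2}$ when you mean the subfield (as a group $K^{*2}$ does not contain $0$ and is not a subfield), and your closure-under-multiplication identity has a stray $v_1v_2$ term inside the square — the correct identity is $(u_1^2+v_1^2x)(u_2^2+v_2^2x) = (u_1u_2)^2 + (v_1v_2x)^2 + (u_1v_2+u_2v_1)^2x$ — but the alternative argument you give, namely $[K^2(x):K^2]\le 2$, already covers this. Also, as you suspect, the "adjust within the square class" step is automatic, since $K^2(x)$ is closed under multiplication and division by nonzero squares, hence is a union of square classes.
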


\begin{proof} Suppose
$\overline{z} \in D_K\langle 1,\overline{y}\rangle$.
By our assumptions, $x,y,z \in K^*$, $x,y \notin K^{*2}$, $y=a^2+b^2x$, $z=c^2+d^2y$, $a,b, c,d \in K$. It follows that $z= c^2+d^2(a^2+b^2x) = (c+ad)^2+(bd)^2x$, so $\overline{z} \in D_K\langle 1,\overline{x}\rangle$.  This proves the inclusion $D_K\langle 1,\overline{y} \rangle \subseteq D_K\langle 1,\overline{x}\rangle$. The other inclusion follows from this one,  using the symmetry of the hypothesis (i.e., using $\overline{y}\in D_K\langle 1,\overline{x}\rangle$ $\Leftrightarrow$ $\overline{x} \in D_K\langle 1,\overline{y}\rangle$).
\end{proof}

\begin{lemma} \label{characteristic not 2} Suppose $K$ is a function field over a global field, $\operatorname{char}(K)\ne 2$. Then there exists $\overline{x},\overline{y} \in K^*/K^{*2}$, $\overline{x},\overline{y} \ne 1$ such that $\overline{y}\in D_K\langle 1,\overline{x}\rangle$, $D_K\langle 1,\overline{y}\rangle \not\subseteq D_K\langle 1,\overline{x}\rangle$.
\end{lemma}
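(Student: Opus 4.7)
The plan is to exploit the rigidity of uniformizers modulo $(1+M_v)K^{*2}$ for a suitable discrete rank one Abhyankar valuation $v$. By Lemma \ref{well known}(1), pick two distinct discrete rank one Abhyankar valuations $v \ne v'$ on $K$ with $\operatorname{char}(K_v), \operatorname{char}(K_{v'}) \ne 2$: in positive characteristic the residue characteristic automatically equals $\operatorname{char}(K) \ne 2$, while in characteristic zero one takes valuations $v_f$ arising from irreducibles $f$ of positive degree over a subfield $K_0$, whose residue fields are finite extensions of $K_0[t]/(f)$ and thus of characteristic zero. Any two inequivalent discrete rank one valuations are independent, so by the approximation theorem there exists $x \in K^*$ with $v(x) = 1$ and $v'(x) = -1$; set $y := 1+x$. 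Then $\overline{x} \ne 1$ because $v(x)$ is odd, $\overline{y} \ne 1$ because $v'(y) = v'(x) = -1$ is odd, and $y = 1^2 + 1^2 \cdot x$ lies in $D_K\langle 1, x\rangle$, so $\overline{y} \in D_K\langle 1, \overline{x}\rangle$.

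Next, I claim
\[D_K\langle 1,x\rangle \subseteq (1+M_v)K^{*2} \cup x(1+M_v)K^{*2}.\]
For $z = a^2 + b^2 x$ with $a,b \in K^*$, the valuations $v(a^2) = 2v(a)$ and $v(b^2 x) = 2v(b)+1$ have different parities, hence are distinct; dividing $z$ by whichever of $a^2$ or $b^2 x$ has smaller valuation expresses $z$ as a square times an element of $1+M_v$, scaled by $1$ or by $x$ respectively (the cases $a=0$ or $b=0$ are immediate). It therefore suffices to exhibit $z \in D_K\langle 1,y\rangle$ with $v(z) = 0$ and $\pi_v(z) \notin K_v^{*2}$, for then $z$ lies neither in $(1+M_v)K^{*2}$ (whose unit part has square residue) nor in $x(1+M_v)K^{*2}$ (whose elements have odd $v$-valuation), and consequently $\overline{z} \in D_K\langle 1,\overline{y}\rangle \setminus D_K\langle 1,\overline{x}\rangle$.

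Since $y \in 1+M_v$ has residue $1$, one has $\pi_v(a^2+b^2 y) = \pi_v(a)^2 + \pi_v(b)^2$ for $a, b \in U_v$, so the construction reduces to finding $\alpha, \beta \in K_v$ with $\alpha^2+\beta^2 \in K_v^* \setminus K_v^{*2}$ and lifting them to $v$-units. When $\operatorname{ntd}(K) \ge 1$ one has $\operatorname{ntd}(K_v) \ge 0$, so $K_v$ is itself a function field over a global field and Lemma \ref{well known}(3) applied to $K_v$ with $x=1$ yields such $\alpha, \beta$. When $\operatorname{ntd}(K) = 0$ the field $K_v$ is finite of odd order, and the standard counting argument (the sets $\{c-b^2\}$ and $\{a^2\}$ in $\mathbb{F}_q$ each have size $(q+1)/2$, hence intersect) shows every element of $K_v$ is a sum of two squares, so any non-square can be written as $\alpha^2+\beta^2$. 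The main obstacle anticipated is that $1+x$ could itself be a square in $K$, which would force $\overline{y}=1$; the role of the second valuation $v'$ with $v'(x)=-1$ is precisely to guarantee that $v'(y)=-1$ is odd, ruling this out.
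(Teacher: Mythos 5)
Your proof is correct and takes essentially the same approach as the paper: both arguments fix two inequivalent discrete rank-one Abhyankar valuations with odd residue characteristic, use the rigidity of $x$ modulo $(1+M_v)K^{*2}$ to bound $D_K\langle 1,\overline{x}\rangle$, and invoke Lemma \ref{well known}(3) (or the finite-field sum-of-two-squares fact) to manufacture $z \in D_K\langle 1,\overline{y}\rangle$ with non-square residue. The only cosmetic difference is the parameterization: the paper sets $y = a^2+x$ with $v(x)=w(x)=1$, using $w$ for the residue argument and $v$ to keep $v(y)$ odd, whereas you set $y=1+x$ with $v(x)=1$, $v'(x)=-1$, perform the residue argument at $v$ itself, and use $v'$ only to ensure $\overline{y}\ne 1$.
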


\begin{proof}
Fix inequivalent discrete rank one Abhyankar valuations $v,w$ on $K$ with $\operatorname{char}(K_v), \operatorname{char}(K_w)\ne 2$. Choose $x$ so that $v(x)=w(x)=1$ and $a_0,b_0$ so that $w(a_0)=w(b_0)=0$ and the image of $c = a_0^2+b_0^2$ in the residue field of $w$ is not a square. This is possible by Lemma \ref{well known} (3).
Define $y=a^2+x$, $z=b^2+y$ (so $z=a^2+b^2+x$) where $a,b$ are such that $v(a)>0$, $w(a-a_0)> 0$, $w(b-b_0)>0$. Then $v(y) = v(x) = 1$, so $x,y\notin K^{*2}$ and $w(a^2+b^2 -c) >0$ so $z= a^2+b^2+x \in c(1+M_w)$. Thus $x,y,z \in K^*$, $\overline{y} \in D_K\langle 1,\overline{x}\rangle$, $\overline{z} \in D_K\langle 1,\overline{y}\rangle$,  $\overline{x}\ne 1$, $\overline{y} \ne 1$. Let $T =  (1+M_w)K^{*2}$. Thus $T +xT = T\cup xT$ and $z \notin T \cup xT$, so $\overline{z} \notin D_K\langle 1,\overline{x}\rangle$.
\end{proof}

\begin{cor} \label{invariants} Let $K,L$ be function fields over global fields and $K \sim L$. Then

(1) $\operatorname{char}(K)=0$ iff $\operatorname{char}(L) = 0$,

(2) $\operatorname{char}(K)=2$ iff $\operatorname{char}(L) = 2$.
\end{cor}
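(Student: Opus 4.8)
The plan is to deduce both equivalences from Lemmas \ref{characteristic 2} and \ref{characteristic not 2}, which together give an intrinsic, hyperfield-theoretic characterization of ``$\operatorname{char} = 2$'' among function fields over global fields. Recall that a hyperfield isomorphism $\alpha : Q(K) \to Q(L)$, viewed as a group isomorphism $K^*/K^{*2} \to L^*/L^{*2}$ with $\alpha(-\overline{1}) = -\overline{1}$, preserves value sets of binary forms: $\alpha(D_K\langle \overline{a},\overline{b}\rangle) = D_L\langle \alpha(\overline{a}),\alpha(\overline{b})\rangle$. Hence the property
\begin{linenomath}\[
(\star)\quad \forall\, \overline{x},\overline{y}\in Q(K)^*\setminus\{\overline{1}\}:\ \overline{y}\in D_K\langle \overline{1},\overline{x}\rangle \ \Longrightarrow\ D_K\langle \overline{1},\overline{y}\rangle = D_K\langle \overline{1},\overline{x}\rangle
\]\end{linenomath}
is preserved by $\alpha$: $K$ satisfies $(\star)$ if and only if $L$ does. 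Lemma \ref{characteristic 2} says every field of characteristic $2$ satisfies $(\star)$, and Lemma \ref{characteristic not 2} says no function field over a global field of characteristic $\ne 2$ satisfies $(\star)$. Therefore, among function fields over global fields, $(\star)$ holds exactly when the characteristic is $2$.

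For (2): since $K\sim L$ means there is a hyperfield isomorphism $\alpha: Q(K)\to Q(L)$, and $(\star)$ is an isomorphism invariant, $K$ satisfies $(\star)$ iff $L$ does; by the previous paragraph this says $\operatorname{char}(K) = 2$ iff $\operatorname{char}(L) = 2$.

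For (1): by Lemma \ref{ntd estimate} applied in both directions (using the symmetry of the hypothesis $K\sim L$, exactly as in the proof of Theorem \ref{main theorem 2}(1)), we have $\operatorname{ntd}(K) = \operatorname{ntd}(L)$. Now suppose $\operatorname{char}(K) = 0$ but $\operatorname{char}(L) = p \ne 0$. By part (2) we may assume $p$ is odd. Then $L$ is a function field over a finite field, so it has a nontrivial discrete rank one Abhyankar valuation $w$ with residue field a finite field of odd characteristic and $\operatorname{rk}_{\mathbb{Q}}(\Gamma_w) = 1 = \operatorname{ntd}(L) + 1$. The simplest route is to observe that this forces a finite residue field somewhere: apply $\alpha^{-1}$ together with Theorem \ref{main theorem 2}(2) to match $w$ (which is allowed, since $L_w$ is finite of odd characteristic, not characteristic $2$) to an Abhyankar valuation $v$ of $K$ with the same rank data, so $\operatorname{ntd}(K_v) = -1$, i.e.\ $K_v$ is a finite field. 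But $K$ has characteristic $0$, and a valuation on a characteristic-$0$ function field over a global field cannot have a finite residue field unless $K_v$ has characteristic $0$ too — contradiction, since a finite field has positive characteristic. (Concretely: $K_v$ is either a global field or a finite field by the Abhyankar structure theory of Section 6; if $\operatorname{char}(K) = 0$ then either $v|_{\mathbb Q}$ is trivial, forcing $\operatorname{char}(K_v)=0$, or $v|_{\mathbb Q}$ is $p$-adic, in which case $\operatorname{ntd}(K_v) = \operatorname{trdeg}(K_v:\mathbb F_p)-1 = -1$ would force $K_v = \mathbb F_p$ but then $\operatorname{trdeg}(K_v:k_{v|\mathbb Q}) = 0$ with $k_{v|\mathbb Q}=\mathbb F_p$, so $\operatorname{rk}_{\mathbb Q}(\Gamma_v/\Gamma_{v|\mathbb Q}) = \operatorname{ntd}(K) = 0$, i.e.\ $\operatorname{rk}_{\mathbb Q}(\Gamma_v) = 1$, which is consistent — so one must instead argue directly that $\operatorname{char}(K)=0$ function fields over global fields do have such valuations, and the genuine obstruction is elsewhere.)

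The main obstacle, as the parenthetical above signals, is that part (1) cannot be obtained purely formally from residue-field considerations — a $p$-adic valuation on a number field already has a finite residue field — so I expect part (1) to require a more robust invariant distinguishing characteristic $0$ from characteristic $p$. The clean approach I would actually take: by part (2) it suffices to rule out $\operatorname{char}(K)=0$, $\operatorname{char}(L)=p$ with $p$ odd. In characteristic $p \ne 2$ with $L$ a function field over $\mathbb F_q$, the field $L$ contains no element $t$ with $\overline{t}$, $\overline{1+t}$, $\dots$ realizing the full behaviour forced by the presence of an archimedean-like place; more reliably, note that $Q(\mathbb F_q)$ with $q$ odd has only two square classes while any function field over a number field, via Theorem \ref{main theorem 2}(2) matching a high-rank Abhyankar valuation with global residue field, yields a residue field that is a number field, whose quadratic hyperfield (being that of a global field of characteristic $0$) is distinguished from that of a positive-characteristic global field by known results on Witt equivalence of global fields (\cite{Petal}, \cite{Sz1}). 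Thus: pick an Abhyankar valuation $v$ of $K$ with $\operatorname{rk}_{\mathbb Q}(\Gamma_v) = \operatorname{ntd}(K)$, so $K_v$ is a global field of characteristic $0$; by Theorem \ref{main theorem 2}(2) it matches $w$ with $L_w$ a global field, and $K_v \sim L_w$ by part (4); since Witt equivalence of global fields preserves characteristic-zero-ness (number fields are Witt equivalent only to number fields), $\operatorname{char}(L_w) = 0$, hence $\operatorname{char}(L) = 0$. This reduces part (1) to the known classification of Witt equivalence of global fields, which is the natural place for the characteristic dichotomy to live.
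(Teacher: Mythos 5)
Your final argument is essentially the paper's own: for (2) you use Lemmas~\ref{characteristic 2} and~\ref{characteristic not 2} to make ``characteristic $2$'' an intrinsic invariant of $Q(K)$, and for (1) you pick an Abhyankar valuation $v$ with $K_v$ a number field, match it to $w$ via Theorem~\ref{main theorem 2} (parts (2)--(4)), and invoke the known classification of Witt equivalence of global fields (the paper cites \cite[Theorem~1.1]{Sz1}, via the dyadic-place invariant, to conclude $L_w$ is a number field). The detour in the middle of your treatment of (1) is, as you yourself observe, a dead end --- characteristic-$0$ function fields over number fields certainly do admit Abhyankar valuations with finite residue field, so matching finite residue fields alone cannot separate the characteristics --- but the ``clean approach'' you land on is the correct one and matches the paper.
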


\begin{proof} For (1), assume $\operatorname{char}(K)=0$. Fix an Abhyankar valuation $v$ of $K$ such that $K_v$ is a number field (so $K_v$ possesses a dyadic valuation). Denote by $w$ the corresponding Abhyankar valuation on $L$. Thus $L_w$ is a global field  and $K_v \sim L_w$. Applying \cite[Theorem 1.1]{Sz1}, $L_w$ also possesses a dyadic valuation so is also a number field. This proves $\operatorname{char}(L)=0$. (2) Assume $\operatorname{char}(K)=2$. Applying Lemma \ref{characteristic 2} and Lemma \ref{characteristic not 2}, we see that $\operatorname{char}(L)=2$. \end{proof}

\begin{rem}

(1) For a global field $K$ the square of the fundamental ideal of its Witt ring of non-singular symmetric bilinear forms vanishes, if $K$ has characteristic 2 (\cite[Theorem III.5.10]{mh}) and does not vanish for global fields of any other characteristic (see \cite[Chapter III]{mh}). Hence, if $K$ and $L$ are Witt equivalent global fields and one field has characteristic 2, the other does also. Corollary \ref{invariants} can be viewed as a certain generalization of this observation.

(2) Any two quadratically closed fields are Witt equivalent, regardless of their characteristics, their Witt ring being just $\mathbb{Z}/2\mathbb{Z}$ (\cite[Proposition 3.1]{l}, \cite[Remark III.3.4]{mh}). Therefore it is, in principle, possible to provide an example of two Witt equivalent fields $K$ and $L$ with $\operatorname{char} K = 2$ and $\operatorname{char} L \neq 2$. However, the authors are not aware of any other examples.

\end{rem}

\begin{lemma} \label{dimension formula}  If $K$ is a function field over a field $k$, $\operatorname{char}(k)=2$, then \begin{linenomath}\[[K:K^2] = 2^{\operatorname{trdeg}(K:k)}\cdot [k:k^2].\]\end{linenomath}
\end{lemma}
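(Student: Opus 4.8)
The plan is to reduce the statement to two standard index computations — one for finite field extensions and one for a simple transcendental extension — and then assemble them. Write $p=2$ (the argument works verbatim for any prime) and put $n=\operatorname{trdeg}(K:k)$. Since $K$ is finitely generated over $k$, choose a transcendence basis $t_1,\dots,t_n$ of $K$ over $k$ and set $F=k(t_1,\dots,t_n)$; then $K/F$ is a finite extension (it is algebraic and finitely generated over $F$). All degrees below are to be read as cardinals, which causes no trouble since every extension we cancel is finite.

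\emph{Step 1 (finite extensions).} I would first show: if $L/F$ is a finite extension of fields of characteristic $p$, then $[L:L^p]=[F:F^p]$. The Frobenius map $x\mapsto x^p$ is a field isomorphism $L\cong L^p$ carrying $F$ isomorphically onto $F^p$, so $[L^p:F^p]=[L:F]$. Computing $[L:F^p]$ along the two towers $F^p\subseteq L^p\subseteq L$ and $F^p\subseteq F\subseteq L$ gives $[L:L^p]\,[L:F]=[L:L^p]\,[L^p:F^p]=[L:F^p]=[L:F]\,[F:F^p]$; cancelling the finite factor $[L:F]$ yields the claim.

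\emph{Step 2 (simple transcendental extension).} Next I would show $[k(t):k(t)^p]=p\,[k:k^p]$. Here $k(t)^p=k^p(t^p)$, since the image of $k(t)$ under Frobenius is the subfield generated by $k^p$ and $t^p$. In the tower $k^p(t^p)\subseteq k^p(t)\subseteq k(t)$ one has $[k^p(t):k^p(t^p)]=p$, because $t$ is a root of $X^p-t^p\in k^p(t^p)[X]$, which is irreducible over $k^p(t^p)$ (no power $t^d$ with $0<d<p$ lies in $k^p(t^p)$: comparing exponents modulo $p$ in a putative identity $t^dQ(t^p)=P(t^p)$ with $P,Q\in k^p[X]$ forces $Q=0$). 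And $[k(t):k^p(t)]=[k:k^p]$: any $k^p$-basis $\{e_i\}$ of $k$ is a $k^p(t)$-basis of $k(t)$ — independence is immediate after clearing denominators into $k^p[t]$, and for spanning one writes $f/g=fg^{p-1}/g^p$ with $g^p\in k^p[t^p]$ and $fg^{p-1}\in k[t]=\bigoplus_i k^p[t]e_i$. Multiplicativity of degrees in the tower then gives Step 2.

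\emph{Step 3 (assembly).} Iterating Step 2 (an easy induction on the number of variables, taking $k'=k(t_1,\dots,t_{n-1})$ and $k(t_1,\dots,t_n)=k'(t_n)$) gives $[F:F^p]=p^n[k:k^p]$. Applying Step 1 to the finite extension $K/F$ gives $[K:K^p]=[F:F^p]=p^n[k:k^p]$, which is exactly $[K:K^2]=2^{\operatorname{trdeg}(K:k)}[k:k^2]$. I expect the main (though entirely routine) obstacle to be the two index computations inside Step 2 — the irreducibility of $X^p-t^p$ over $k^p(t^p)$ and the base-change identity $[k(t):k^p(t)]=[k:k^p]$ — everything else being bookkeeping and cardinal arithmetic.
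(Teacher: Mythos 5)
Your proposal is correct and follows essentially the same route as the paper: both reduce to the purely transcendental case $F=k(x_1,\dots,x_n)$ via the Frobenius isomorphism $K\cong K^2$ (your Step 1 is exactly the cancellation argument the paper performs implicitly when it notes $[K^2:k^2(x_1^2,\dots,x_n^2)]=[K:k(x_1,\dots,x_n)]$), and then compute $[F:F^2]=2^n[k:k^2]$. Your Steps 2 and 3 simply spell out the detail that the paper dismisses with ``But this is clear.''
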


\begin{proof} Let $n := \operatorname{trdeg}(K:k)$. Fix $x_1,\dots,x_n$ in $K$ algebraically independent over $k$. Then $K$ is a finite extension of $k(x_1,\dots,x_n)$.  The map $a \mapsto a^2$ defines an isomorphism from $K$ onto $K^2$ which maps $k(x_1,\dots,x_n)$ onto $k^2(x_1^2,\dots,x_n^2)$. It follows that $[K^2: k^2(x_1^2,\dots,x_n^2)] = [K:k(x_1,\dots,x_n)]$. Thus we are reduced to showing that $[k(x_1,\dots,x_n): k^2(x_1^2,\dots,x_n^2)] = 2^n[k:k^2]$. But this is clear.
\end{proof}

\begin{rem} \

(1) It follows from results in \cite{bm} (specifically, from \cite[Theorem 2.9 and Proposition 2.10]{bm}) that (i) if $K,L$ are global fields of characteristic $2$ then $K \sim L$, and (ii) if $K,L$ are function fields over global fields of characteristic $2$ of nominal transcendence degree $1$ or more then $K \sim L$ iff $K \cong L$. One obtains these results by applying Lemma \ref{dimension formula}, taking $k= \mathbb{F}_2$.

(2) For $K,L$ global fields of characteristic $\ne 2$ the meaning of $K \sim L$ is well understood; see for example \cite[Theorem 3.1 and Corollary 3.2]{C}.
\end{rem}

The relationship between Abhyankar valuations $v$ on $K$ with $K_v$ finite, $\operatorname{char}(K_v)=2$ and   Abhyankar valuations $w$ on $L$ with $L_v$ finite, $\operatorname{char}(L_w)=2$ seems to be not very well understood.

\begin{rem} \label{dyadic case} \

(1) If $K$ and $L$ are number fields and $\alpha: Q(K) \rightarrow Q(L)$ is a hyperfield isomorphism the arguments in \cite{Sz1} show that for each dyadic valuation $v$ of $K$ there exists a unique dyadic valuation $w$ of $L$ such that $\alpha$ maps $(1+4M_v)K^{*2}/K^{*2}$ onto $(1+4M_w)L^{*2}/L^{*2}$.

(2) Suppose  $v$  is a dyadic valuation on a number field $K$. Denote by $\tilde{K}_v$ the completion %henselization (or completion)
of $K$ at $v$. The natural embedding $K \hookrightarrow \tilde{K}_v$ induces a hyperfield isomorphism $K/_m T \cong Q(\tilde{K}_v)$, where $T := (1+4M_v)K^{*2}$. The structure of $Q(\tilde{K}_v)$ is described in \cite[Section 3.6]{M1} for example.

(3) Suppose $K$ is a function field over $\mathbb{Q}$ and $v'$ is an Abhyankar valuation on $K$ such that the residue field $K_{v'}$ is a number field. Suppose also that $v$ is a valuation of $K$ such that $v \preceq v'$ and the induced valuation $\overline{v}$ on $K_{v'}$ is dyadic. Then $M_{v'} \subseteq M_v$ and $4M_{v'} = M_{v'}$ (so $1+M_{v'} \subseteq 1+4M_v$), $K/_m (1+4M_v)K^{*2}$ is a group extension of the hyperfield $K_{v'}/_m(1+4M_{\overline{v}})K_{v'}^{*2}$ in a natural way,
and the following diagram of hyperfields and hyperfield morphisms is commutative:
\begin{linenomath}\begin{equation}\label{d5} \xymatrix{
Q(K) \ar[r] & K/_m(1+M_{v'})K^{*2} \ar[r] & K/_m(1+4M_v)K^{*2} \\ 
& Q(K_{v'}) \ar[u] \ar[r] & K_{v'}/_m (1+4M_{\overline{v}})K_{v'}^{*2} \ar[u]}\end{equation}\end{linenomath}
Here, the horizontal arrows are quotient morphisms and the vertical arrows are group extensions.

(4) It follows from (1), (2) and (3) that if $K,L$ are function fields over global fields and $\alpha: Q(K) \rightarrow Q(L)$ is a hyperfield isomorphism, then there is a well-defined bijection $v \leftrightarrow w$ such that $\alpha$ maps $(1+4M_v)K^{*2}/K^{*2}$ onto $(1+4M_w)L^{*2}/L^{*2}$ between Abhyankar valuations $v$ of $K$ with $K_v$ finite, $\operatorname{char}(K_v)=2$ such that there exists an Abhyankar valuation $v'$ with $v \preceq v'$ and $K_{v'}$ is a number field and  Abhyankar valuations $w$ of $L$ with $L_w$ finite, $\operatorname{char}(L_w)=2$ such that there exists an Abhyankar valuation $w'$ with $w \preceq w'$ and $L_{w'}$ is a number field.  The proof is omitted.
\end{rem}

The relationship between non-Abhyankar valuations $v$ on $K$ and   non-Abhyankar valuations $w$ on $L$ is not very well understood. It is known, by results in \cite{fvk}, that the Abhyankar valuations are dense in the spectral space consisting of all valuations, but this does not seem to help very much.

\section{Further applications}

Let $K$ be a function field in $n$ variables over a global field. For $0\le i \le n$ denote by $\nu_{K,i}$ the set of Abyankar valuations $v$ on $K$ with $\operatorname{ntd}(K_v) = i$. Observe that  \begin{linenomath}\[\nu_{K,i} = \nu_{K,i,0}\cup \nu_{K,i,1} \cup \nu_{K,i,2} \text{ (disjoint union)}\]\end{linenomath} where \begin{linenomath}\[ \nu_{K,i,j} := \begin{cases} \{ v\in \nu_{K,i} : \operatorname{char}(K_v)=0\} \text{ if } j=0 \\ \{ v\in \nu_{K,i} : \operatorname{char}(K_v)\ne 0,2\} \text{ if } j = 1 \\\{ v\in \nu_{K,i} : \operatorname{char}(K_v)=2\} \text{ if } j=2 \end{cases}.\]\end{linenomath}
Of course, some of the sets $\nu_{K,i,j}$ may be empty. Specifically, if $\operatorname{char}(K) = p$ for some odd prime $p$ then $\nu_{K,i,j} = \emptyset$ for $j \in \{ 0,2\}$, and if $\operatorname{char}(K) = 2$ then  $\nu_{K,i,j} = \emptyset$ for $j \in \{ 0,1 \}$.

\begin{cor}\label{one variable} Suppose $K$, $L$ are function fields in $n$ variables over global fields which are Witt equivalent via a hyperfield isomorphism $\alpha : Q(K) \rightarrow Q(L)$. Then for each $i\in \{ 0,1,\dots,n\}$ and each $j\in \{ 0,1,2\}$ there is a uniquely defined bijection  between $\nu_{K,i,j}$ and $\nu_{L,i,j}$ such that, if $v \leftrightarrow w$ under this bijection, then $\alpha$ maps $(1+M_v)K^{*2}/K^{*2}$ onto $(1+M_w)L^{*2}/L^{*2}$ and $U_vK^{*2}/K^{*2}$ onto $U_wL^{*2}/L^{*2}$.
\end{cor}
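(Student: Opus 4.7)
The plan is to derive this corollary directly from Theorem \ref{main theorem 2} together with Corollary \ref{invariants}. For any $v \in \nu_{K,i,j}$ with $i \ge 0$, the residue field $K_v$ is infinite: if $i \ge 1$ it is a function field of positive nominal transcendence degree over a global field, and if $i = 0$ it is itself a global field (a number field in the characteristic zero case, a function field in one variable over a finite field otherwise). In particular $K_v$ is never finite of characteristic $2$, so Theorem \ref{main theorem 2}(2) applies and produces a unique Abhyankar valuation $w$ on $L$ such that $\alpha$ maps $(1+M_v)K^{*2}/K^{*2}$ onto $(1+M_w)L^{*2}/L^{*2}$; moreover $\operatorname{ntd}(K_v) = \operatorname{ntd}(L_w)$, so the index $i$ is preserved.

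Next, since $K_v$ is not finite, the exceptional case in Theorem \ref{main theorem 2}(3) does not occur, and therefore $\alpha$ also maps $U_vK^{*2}/K^{*2}$ onto $U_wL^{*2}/L^{*2}$. Applying part (4) of that theorem yields an induced hyperfield isomorphism $Q(K_v) \to Q(L_w)$, i.e. $K_v \sim L_w$. As both residue fields are function fields over global fields (the final observation in Section 6), Corollary \ref{invariants} then tells us that $\operatorname{char}(K_v) = 0 \Leftrightarrow \operatorname{char}(L_w) = 0$ and $\operatorname{char}(K_v) = 2 \Leftrightarrow \operatorname{char}(L_w) = 2$. Consequently $v \in \nu_{K,i,j}$ forces $w \in \nu_{L,i,j}$ with the same $j$.

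Uniqueness of $w$ given $v$ is the uniqueness clause of Theorem \ref{main theorem 2}(2). To obtain the inverse bijection, I would run the construction symmetrically using $\alpha^{-1} : Q(L) \to Q(K)$; by the same argument this sends $\nu_{L,i,j}$ into $\nu_{K,i,j}$, and the two uniqueness statements force the two assignments to be mutually inverse, producing the desired bijection $\nu_{K,i,j} \leftrightarrow \nu_{L,i,j}$.

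The only step with any content beyond direct invocation is the preservation of the characteristic class $j$, which reduces to the characteristic invariance of Witt equivalence for function fields over global fields; this is precisely Corollary \ref{invariants}. The rest is bookkeeping on top of Theorem \ref{main theorem 2}, so no new technical obstacle is expected.
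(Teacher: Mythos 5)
Your proof is correct and follows essentially the same route as the paper: invoke Theorem \ref{main theorem 2} for the correspondence $v \leftrightarrow w$ and the preservation of the subgroups, then use $K_v \sim L_w$ together with Corollary \ref{invariants} to see the characteristic class $j$ is preserved. You are somewhat more explicit than the paper (which compresses this to two lines), and the only tiny point you leave implicit is that for $i = n$ the valuations $v,w$ are trivial, so $Q(K_v) \cong Q(L_w)$ is just the hypothesis $K \sim L$ rather than an application of Theorem \ref{main theorem 2}(4), which is stated only for non-trivial $v,w$.
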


\begin{proof} The correspondence $v \leftrightarrow w$ is the one defined in Theorem \ref{main theorem 2}. If $v \leftrightarrow w$ then $K_v \sim L_w$ so $v \in \nu_{K,i,j}$ $\Leftrightarrow$ $w \in \nu_{L,i,j}$, for each $i$ and $j$.
\end{proof}

\begin{cor} \label{rational points} Let $K \sim L$ be function fields over number fields,  with fields of constants $k$ and $\ell$ respectively.
If there exists $v \in \nu_{K,0,0}$ with $K_v =k$ and $w \in \nu_{L,0,0}$ with $L_w= \ell$
then $k \sim \ell$.
\end{cor}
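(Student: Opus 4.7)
The plan is to apply Corollary \ref{one variable} to transfer the valuations $v,w$ to corresponding valuations on the other side. Given the valuation $v\in\nu_{K,0,0}$ with $K_v=k$ provided by the first hypothesis, Corollary \ref{one variable} produces a unique partner $w'\in\nu_{L,0,0}$ matched to $v$ under the $\alpha$-induced bijection. Since $K_v=k$ is not a finite field, the exceptional case of Theorem \ref{main theorem 2}(3) is avoided, so $\alpha$ maps $U_vK^{*2}/K^{*2}$ onto $U_{w'}L^{*2}/L^{*2}$ and Theorem \ref{main theorem 2}(4) supplies an induced hyperfield isomorphism $Q(K_v)\cong Q(L_{w'})$. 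Substituting $K_v=k$ gives $k\sim L_{w'}$.

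Next I would show that $L_{w'}$ is a finite algebraic extension of $\ell$. Since $w'\in\nu_{L,0,0}$ has $\operatorname{char}(L_{w'})=0$, the restriction $w'|\ell$ must be trivial (any non-trivial valuation on the number field $\ell$ would be $p$-adic, forcing positive residue characteristic), so $\ell$ embeds in $L_{w'}$. Because $w'$ is Abhyankar with $\operatorname{ntd}(L_{w'})=\operatorname{ntd}(K_v)=0$, the field $L_{w'}$ is a function field of transcendence degree $0$ over $\ell$, hence a finite algebraic extension. Running the same argument on the hypothesized $w\in\nu_{L,0,0}$ with $L_w=\ell$, its partner $v'\in\nu_{K,0,0}$ satisfies $K_{v'}\sim\ell$ with $k\subseteq K_{v'}$ a finite extension.

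The main obstacle is then to deduce from the symmetric data $k\sim L_{w'}\supseteq\ell$ and $\ell\sim K_{v'}\supseteq k$ that $L_{w'}=\ell$ (equivalently $K_{v'}=k$), which would immediately give $k\sim L_{w'}=\ell$. My plan for this step is to combine these two chains with the classical classification of Witt equivalence for number fields---encoded by the number of real orderings $r_1$, the level, the $2$-rank of the ideal class group, and the dyadic matching conditions on completions; see \cite{C,Petal,Sz1,Sz2}---and argue that any nontrivial extension in either chain would force a strict increase in at least one of these invariants, contradicting the two Witt equivalences. A natural starting point is ordering counting: Witt equivalence yields $r_1(k)=r_1(L_{w'})$ and $r_1(\ell)=r_1(K_{v'})$, while restriction of orderings along the inclusions $\ell\subseteq L_{w'}$ and $k\subseteq K_{v'}$ gives compatible estimates in the opposite direction, and a careful bookkeeping of these two kinds of constraints should force both inclusions to be equalities.
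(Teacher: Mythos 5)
Your reduction in the first two paragraphs is sound and is essentially the same as the paper's: you use the Theorem \ref{main theorem 2}/Corollary \ref{one variable} bijection to obtain $w' \in \nu_{L,0,0}$ with $k = K_v \sim L_{w'}$ and $\ell \subseteq L_{w'}$ finite, and symmetrically $v' \in \nu_{K,0,0}$ with $\ell = L_w \sim K_{v'}$ and $k \subseteq K_{v'}$ finite, so that the whole problem reduces to showing $L_{w'} = \ell$. The gap is in the final step. The route you propose does not close: $r_1$ is not monotone under finite extensions of number fields (compare $\mathbb{Q} \subsetneq \mathbb{Q}(\sqrt{-1})$, where $r_1$ drops from $1$ to $0$, versus $\mathbb{Q} \subsetneq \mathbb{Q}(\sqrt{2})$, where it rises to $2$), so the claimed ``compatible estimates in the opposite direction'' from restriction of orderings are not available. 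Worse, the $2$-rank of the ideal class group is \emph{not} a Witt equivalence invariant of number fields at all --- this is exactly the phenomenon the paper exploits later (see Corollary \ref{infinitely many} and the surrounding discussion citing \cite{Sz} and \cite{cps}) --- so it cannot be used as an obstruction here.

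The invariant that actually does the job, and the one the paper uses, is the degree $[\,\cdot\,:\mathbb{Q}]$: by \cite[Proposition 1.5]{Sz1}, Witt equivalent number fields have the same degree over $\mathbb{Q}$, and degree \emph{is} strictly monotone under proper finite extension. Feeding this into your two chains gives
\begin{linenomath}\[ [k:\mathbb{Q}] = [L_{w'}:\mathbb{Q}] \ge [\ell:\mathbb{Q}] = [K_{v'}:\mathbb{Q}] \ge [k:\mathbb{Q}], \]\end{linenomath}
forcing equality throughout, hence $L_{w'}=\ell$ and $K_{v'}=k$, and therefore $k \sim \ell$. (The paper phrases this as choosing, among all pairs $v \leftrightarrow w$ in $\nu_{K,0,0}\times\nu_{L,0,0}$, one minimizing the common degree $[K_v:\mathbb{Q}]=[L_w:\mathbb{Q}]$; by the hypotheses these minima are $[k:\mathbb{Q}]$ and $[\ell:\mathbb{Q}]$ respectively, and they coincide, which is the same computation.) So replace the invariant bookkeeping in your last paragraph with this one-line degree argument and the proof is complete.
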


\begin{proof} Let $v \leftrightarrow w$ be the bijection between $\nu_{K,0,0}$ and $\nu_{L,0,0}$ defined by Corollary \ref{one variable}. We know that $K_v \sim L_w$ for any $v,w$ related in this way. Since $K_v$ and $L_w$ are number fields, this implies $[K_v:\mathbb{Q}] = [L_w:\mathbb{Q}]$ for any such $v,w$ \cite[Proposition 1.5]{Sz1}. We know also that $k \subseteq K_v$ and $\ell \subseteq L_w$. Choosing $v \leftrightarrow w$ so that $[K_v:\mathbb{Q}] = [L_w:\mathbb{Q}]$ is minimal, we see that $K_v=k$ and $L_w=\ell$.
\end{proof}

\begin{rem} \label{rational points remark} \

(1) Suppose $K$ is the function field of an irreducible $k$-variety which has a non-singular $k$-rational point. (This is always the case, for example, if $K$ is purely transcendental over $k$.) Then there exists $v \in \nu_{K,0,0}$ with $K_v = k$. To prove this one uses the fact that if $A$ is a regular local ring of dimension $n$ with maximal ideal $\frak{m} = (x_1,\dots, x_n)$ and residue field $k$, then $A/(x_n)$ is a regular local ring of dimension $n-1$, and the localization of $A$ at the prime ideal $(x_n)$ is a discrete valuation ring with residue field equal to the field of quotients of $A/(x_n)$; e.g., see \cite[Chapter 11]{am}. Iterating this procedure yields a chain of Abhyankar valuations $v_1 \succeq \dots \succeq v_n$ on $K$ with $\operatorname{trdeg}(K_{v_i}:k) = n-i$, $i=1,\dots,n$ and $K_{v_n} = k$.

(2) If $K$ and $L$ are function fields over global fields of characteristic $\ne 0$, with fields of constants $k$ and $\ell$, respectively, then $K \sim L$ $\Rightarrow$ $k \sim \ell$.
If $k,\ell$ have characteristic $2$ then $[k:k^2] = [\ell:\ell^2] = 2$, by Lemma \ref{dimension formula}, so $k\sim \ell$, by \cite[Proposition 2.10]{bm}. Suppose $k,\ell$ each have characteristic different from $0$ and $2$. Then $k,\ell$ each have level $1$ or $2$. If $k$ has level $1$ then $K$ and consequently also $L$ has level $1$. Since $\ell$ is algebraically closed in $L$ this implies $\ell$ has level $1$. This proves $k$ and $\ell$ have the same level, so $k\sim \ell$, by \cite[Corollary 3.2]{C}.

(3) Combining Corollary \ref{rational points} with (1) and (2) we see that, in particular, \cite[Proposition 3.2]{kop1} is indeed true (even though the proof of \cite[Proposition 3.2]{kop1} given in \cite{kop1} is based on the erroneous argument in \cite[Theorem 1.3]{kop1}).
\end{rem}

Suppose now that $k$ is a number field. Then every ordering of $k$ is archimedean, i.e., corresponds to a real embedding $k \hookrightarrow \mathbb{R}$.
Let $r_1$, respectively $r_2$ be the number of real embeddings of $k$, respectively the number of conjugate pairs of complex embeddings of $k$. Thus $[k:\mathbb{Q}] = r_1+2r_2$. Let \begin{linenomath}\[V_k := \{ r\in k^* : (r) = \frak{a}^2 \text{ for some fractional ideal } \frak{a} \text{ of } k \}.\]\end{linenomath} Here, $(r)$ denotes the fractional ideal of $k$ generated by $r$. Clearly $V_k$  is a subgroup of $k^*$ and $k^{*2} \subseteq V_k$.

\begin{lemma} \label{2 rank} The $2$-rank of
$V_k/k^{*2}$ is $r_1+r_2+2\operatorname{-rk}(C_k)$, where $C_k$ denotes the ideal class group of $k$.
\end{lemma}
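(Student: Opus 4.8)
The plan is to fit $V_k/k^{*2}$ into a short exact sequence of $\mathbb{F}_2$-vector spaces whose outer terms are the units modulo squares and the $2$-torsion of the class group, and then to count dimensions. Write $\mathcal{O}_k$ for the ring of integers of $k$ and $\mathcal{O}_k^*$ for its unit group. First I would define a homomorphism $\phi\colon V_k \to C_k$ by $\phi(r) = [\mathfrak{a}]$, where $\mathfrak{a}$ is the fractional ideal with $(r) = \mathfrak{a}^2$. This $\mathfrak{a}$ is unique, since the group of fractional ideals of $k$ is free abelian on the nonzero primes, hence torsion free, so $\mathfrak{a}^2 = \mathfrak{b}^2$ forces $\mathfrak{a} = \mathfrak{b}$; it is then routine that $\phi$ is a homomorphism. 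Because $[\mathfrak{a}]^2 = [\mathfrak{a}^2] = [(r)]$ is trivial in $C_k$, the image of $\phi$ lies in $C_k[2]$, the subgroup of elements of order dividing $2$; and $\phi$ surjects onto $C_k[2]$, since a class $c \in C_k[2]$ is represented by some fractional ideal $\mathfrak{a}$ with $\mathfrak{a}^2$ principal, say $\mathfrak{a}^2 = (r)$, whence $r \in V_k$ and $\phi(r) = c$.

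Next I would identify the kernel. If $\phi(r)$ is trivial then $(r) = \mathfrak{a}^2$ with $\mathfrak{a} = (t)$ principal, so $(r) = (t^2)$ and $r = u t^2$ for some $u \in \mathcal{O}_k^*$; conversely every element of $\mathcal{O}_k^* k^{*2}$ lies in $V_k$ and is killed by $\phi$. Hence $\ker\phi = \mathcal{O}_k^* k^{*2}$, giving the exact sequence
\[
0 \longrightarrow \mathcal{O}_k^* k^{*2}/k^{*2} \longrightarrow V_k/k^{*2} \longrightarrow C_k[2] \longrightarrow 0 .
\]
All three groups have exponent $2$, so this is an exact sequence of $\mathbb{F}_2$-vector spaces, and therefore the $2$-rank of $V_k/k^{*2}$ is the $2$-rank of $\mathcal{O}_k^* k^{*2}/k^{*2}$ plus $\dim_{\mathbb{F}_2} C_k[2] = 2\operatorname{-rk}(C_k)$.

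Finally I would compute the first summand. One has $\mathcal{O}_k^* k^{*2}/k^{*2} \cong \mathcal{O}_k^*/(\mathcal{O}_k^* \cap k^{*2})$, and $\mathcal{O}_k^* \cap k^{*2} = (\mathcal{O}_k^*)^2$, because $u = t^2$ with $t \in k^*$ forces $(t)^2 = (1)$, hence $t \in \mathcal{O}_k^*$. By Dirichlet's unit theorem $\mathcal{O}_k^* \cong \mu_k \times \mathbb{Z}^{r_1+r_2-1}$, where $\mu_k$ is the finite cyclic group of roots of unity in $k$, of even order since $-1 \in \mu_k$; consequently $\mathcal{O}_k^*/(\mathcal{O}_k^*)^2 \cong (\mathbb{Z}/2\mathbb{Z})^{r_1+r_2}$, of $2$-rank $r_1+r_2$. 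Adding the two contributions yields the asserted formula.

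The only step that needs genuine care is the kernel computation — specifically the fact that a fractional ideal whose square is principal need not itself be principal, which is exactly what produces the $C_k[2]$ contribution — together with the standard bookkeeping from Dirichlet's unit theorem; everything else is formal.
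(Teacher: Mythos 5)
Your proof is correct and self-contained. The paper does not actually give an argument for this lemma; it simply cites Czoga\l a, Lemma~2.4(a), so there is no in-text proof to compare against. Your exact-sequence argument is the standard one for this sort of statement: the map $\phi\colon V_k/k^{*2}\to C_k[2]$ sending $rk^{*2}\mapsto[\mathfrak{a}]$ (well-defined because the ideal group is torsion-free, so $\mathfrak{a}$ is unique), the surjectivity onto $C_k[2]$, the identification $\ker\phi=\mathcal{O}_k^*k^{*2}/k^{*2}\cong\mathcal{O}_k^*/(\mathcal{O}_k^*\cap k^{*2})=\mathcal{O}_k^*/(\mathcal{O}_k^*)^2$, and the final count $r_1+r_2$ from Dirichlet's unit theorem (using $-1\in\mu_k$) are all carried out correctly, and the dimensions add up to $r_1+r_2+2\operatorname{-rk}(C_k)$ as required.
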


\begin{proof} See \cite[Lemma 2.4(a)]{cz}. %The map $r \mapsto \frak{a}$, where $\frak{a}^2 = (r)$, induces a group homomorphism from $V_k/k^{*2}$ onto the group of elements of order $\le 2$ in $C_k$. The kernel is isomorphic to $U_k/U_k^2$, where $U_k$ is the group of units of $k$. By Dirichlet's unit theorem $U_k = \langle \omega\rangle \times F$, where $\langle \omega \rangle$ is the group of roots of unity in $k$ and $F$ is free abelian of rank $r_1+r_2-1$. Since $\langle \omega\rangle$ has even order, $U_k/U_k^2$ has $2$-rank $r_1+r_2$, so $V_k/k^{*2}$ has $2$-rank $r_1+r_2+2\operatorname{-rk}(C_k)$.
\end{proof}

\begin{lemma} \label{unramified extension} Suppose $K = k(x_1,\dots,x_n)$ and
$v$ is a discrete rank 1 valuation on $k$.  There exists an Abhyankar extension $v'$ of $v$ to $K$ such that $\Gamma_{v'} = \Gamma_v$.
\end{lemma}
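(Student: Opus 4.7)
The plan is to build $v'$ as an iterated Gauss extension. Write $K = k(x_1,\dots,x_n)$ and set $K_0 := k$, $K_i := k(x_1,\dots,x_i)$ for $1 \le i \le n$, so $K_i = K_{i-1}(x_i)$ is a rational function field in one variable. I will inductively define valuations $v_i$ on $K_i$ with $v_0 = v$, $v_i$ extending $v_{i-1}$, $\Gamma_{v_i} = \Gamma_v$, and residue field $(K_i)_{v_i} = k_v(\bar{x}_1,\dots,\bar{x}_i)$ purely transcendental over $k_v$, and then take $v' := v_n$.

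For the inductive step, given $v_{i-1}$ on $K_{i-1}$, define the \emph{Gauss extension} $v_i$ to $K_i = K_{i-1}(x_i)$ by
\begin{linenomath}\[
v_i\Big(\sum_{j} a_j x_i^{j}\Big) := \min_j v_{i-1}(a_j)
\]\end{linenomath}
for polynomials, and extending to the field of fractions in the obvious way. This is a well-defined discrete rank one valuation on $K_i$ extending $v_{i-1}$; its value group is still $\Gamma_v$, and its residue field is $(K_{i-1})_{v_{i-1}}(\bar{x}_i)$, a rational function field in one variable over the residue field of $v_{i-1}$. The verification of these properties is classical (see, e.g., Bourbaki, \emph{Commutative Algebra}, Ch.\ VI, \S 10, or Engler--Prestel, \emph{Valued Fields}, 2.2.1); it amounts to checking that multiplicativity holds and that the polynomials with $v_{i-1}$-unit leading coefficient reduce to distinct non-zero elements of the residue field, which forces the valuation formula.

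Setting $v' := v_n$, we have $\Gamma_{v'} = \Gamma_v$ and $K_{v'} = k_v(\bar{x}_1,\dots,\bar{x}_n)$, hence $\operatorname{trdeg}(K_{v'} : k_v) = n$. The Abhyankar equality follows immediately:
\begin{linenomath}\[
\operatorname{rk}_{\mathbb{Q}}(\Gamma_{v'}/\Gamma_v) + \operatorname{trdeg}(K_{v'}: k_v) = 0 + n = \operatorname{trdeg}(K:k),
\]\end{linenomath}
so $v'$ is Abhyankar (relative to $k$), as required.

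There is no real obstacle here; the only point meriting care is verifying that the Gauss formula actually defines a valuation (rather than merely a semi-valuation) and that the residue field is the expected rational function field. Both facts are standard, but since they are used to conclude both $\Gamma_{v'} = \Gamma_v$ and the Abhyankar equality, I would state them as a single lemma or cite them explicitly from a standard reference rather than re-deriving them in the body of the proof.
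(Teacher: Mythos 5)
Your proof is correct and takes essentially the same approach as the paper: both construct $v'$ as the Gauss extension of $v$. The paper defines it in one step on the full polynomial ring $k[x_1,\dots,x_n]$ by $v'(\sum_\alpha a_\alpha x^\alpha) := \min_\alpha v(a_\alpha)$, while you build the same valuation iteratively one variable at a time and verify along the way that the value group is preserved and the residue field is $k_v(\bar{x}_1,\dots,\bar{x}_n)$, which is exactly the Abhyankar computation the paper leaves implicit.
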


\begin{proof}
%Let $K = k(x_1,\dots,x_n)$, where $x_1,\dots,x_n$ is a transcendence basis.
Define $v'$ by \begin{linenomath}\[ v'(\sum_{\alpha} a_{\alpha}x^{\alpha}) := \min \{ v(a_{\alpha}) : \alpha \in \mathbb{N}^n\} \text{ and } v'(\frac{f}{g}) := v'(f)-v'(g).\]\end{linenomath} Here, $x^{\alpha} := x_1^{\alpha_1}\dots x_n^{\alpha_n}$, for $\alpha \in \mathbb{N}^n$.
\end{proof}

\begin{theorem} \label{genus zero case} Suppose $K = k(x_1,\dots,x_n)$ and  $L = \ell(x_1,\dots, x_n)$ where $n\ge 1$ and $k$ and $\ell$ are number fields,
and $\alpha : Q(K) \rightarrow Q(L)$ is a hyperfield isomorphism. Then

(1) $r \in k^*/k^{*2}$ iff $\alpha(r) \in \ell^*/\ell^{*2}$.

(2) The map $r \mapsto \alpha(r)$ defines a hyperfield isomorphism between $Q(k)$ and $Q(\ell)$.

(3) $\alpha$ maps $V_k/k^{*2}$ to $V_{\ell}/\ell^{*2}$.

(4)The $2$-ranks of the ideal class groups of $k$ and $\ell$ are equal.
\end{theorem}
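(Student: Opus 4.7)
The plan is to work through the four assertions in order, using the correspondence of Abhyankar valuations provided by Corollary \ref{one variable} as the main tool.

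For (1), I would identify $k^*K^{*2}/K^{*2}$ intrinsically inside $K^*/K^{*2}$ as $\bigcap_{v\in \nu_{K,n-1,0}} U_vK^{*2}/K^{*2}$. The inclusion $\subseteq$ is immediate: any $v\in \nu_{K,n-1,0}$ has $\operatorname{char}(K_v)=0$, hence restricts trivially to $k$, so every $r\in k^*$ is a unit at $v$. For the reverse inclusion, note that the family $\nu_{K,n-1,0}$ contains all divisorial valuations $v_p$ attached to irreducible polynomials $p\in k[x_1,\dots,x_n]$, and since $k[x_1,\dots,x_n]$ is a UFD, an element $f\in K^*$ whose $v_p$-multiplicity is even for every such $p$ has the form $cg^2$ with $c\in k^*$ and $g\in K^*$. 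Corollary \ref{one variable} carries the bijection $\nu_{K,n-1,0}\leftrightarrow \nu_{L,n-1,0}$ through $\alpha$ and preserves the unit subgroups, so $\alpha(k^*K^{*2}/K^{*2})=\ell^*L^{*2}/L^{*2}$. Since $K/k$ is purely transcendental one has $k^*\cap K^{*2}=k^{*2}$, and (1) follows.

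For (2), the classical Cassels--Pfister theorem says that a quadratic form over $k$ which is anisotropic remains anisotropic over $k(x_1,\dots,x_n)$. Applied to the ternary form $\langle a,b,-c\rangle$ with $a,b,c\in k^*$, this shows that $c$ is represented by $ax^2+by^2$ over $K$ if and only if it is so over $k$; equivalently, the restriction of the hyperfield structure on $Q(K)$ to the image of $k^*/k^{*2}$ is precisely $Q(k)$, and similarly on the $\ell,L$ side. Combined with (1), this makes $\alpha|_{k^*/k^{*2}}$ a hyperfield isomorphism $Q(k)\to Q(\ell)$, giving (2).

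For (3), the analogous description is: an element $r\in k^*$ lies in $V_k$ iff $r\in U_vK^{*2}$ for every $v\in \nu_{K,n-1,1}\cup \nu_{K,n-1,2}$. Indeed, any such $v$ has $\operatorname{char}(K_v)>0$, so $v|_k$ is a non-archimedean prime $\mathfrak{p}$ of $k$ and $v(r)=e_{v/\mathfrak{p}}\cdot v_\mathfrak{p}(r)$; by Lemma \ref{unramified extension}, each $\mathfrak{p}$ admits a Gauss extension in this family with ramification index $1$, so the condition forces $v_\mathfrak{p}(r)$ to be even at every $\mathfrak{p}$, which is exactly the definition of $V_k$. Applying Corollary \ref{one variable} to the bijections for $j=1$ and $j=2$, combined with (1), transports $V_k/k^{*2}$ onto $V_\ell/\ell^{*2}$, yielding (3).

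For (4), Lemma \ref{2 rank} gives the $2$-rank of $V_k/k^{*2}$ as $r_1(k)+r_2(k)+2\operatorname{-rk}(C_k)$, and similarly for $\ell$. By (3) these $2$-ranks are equal. Part (2) supplies $k\sim\ell$, and Witt equivalent number fields have the same number of real embeddings (the orderings correspond) and the same degree over $\mathbb{Q}$ by \cite[Proposition 1.5]{Sz1}, so $r_1(k)+r_2(k)=r_1(\ell)+r_2(\ell)$, and cancellation yields $2\operatorname{-rk}(C_k)=2\operatorname{-rk}(C_\ell)$. The main technical step will be verifying the two intrinsic valuation-theoretic descriptions of $k^*K^{*2}/K^{*2}$ and $V_k/k^{*2}$ used in (1) and (3); once these are in place, Corollary \ref{one variable} is the tool that makes everything commute with $\alpha$.
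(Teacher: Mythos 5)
Your proof is correct, and parts (1), (3) and (4) follow essentially the same route as the paper: the UFD argument identifying the image of $k^*/k^{*2}$ inside $K^*/K^{*2}$ via the family $\nu_{K,n-1,0}$, the description of $V_k/k^{*2}$ via $\nu_{K,n-1,1}\cup\nu_{K,n-1,2}$ together with Lemma \ref{unramified extension}, and the final bookkeeping with Lemma \ref{2 rank} and the invariance of $r_1$, $r_2$ under Witt equivalence. The genuinely different step is your treatment of part (2). You invoke the Cassels--Pfister substitution principle (equivalently, that anisotropic forms over $k$ stay anisotropic over the purely transcendental extension $K=k(x_1,\dots,x_n)$) to conclude that for $a,b,c\in k^*$ one has $\overline{c}\in D_K\langle\overline{a},\overline{b}\rangle$ iff $\overline{c}\in D_k\langle\overline{a},\overline{b}\rangle$, so that the hyperfield structure of $Q(K)$ restricted to the image of $k^*/k^{*2}$ recovers $Q(k)$ on the nose, and likewise on the $\ell$-side; combined with (1) this makes $\alpha\vert_{k^*/k^{*2}}$ a hyperfield isomorphism $Q(k)\to Q(\ell)$. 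The paper instead stays entirely inside its valuation-theoretic machinery: it picks a corresponding pair $v\leftrightarrow w$ in $\nu_{K,0,0}$ and $\nu_{L,0,0}$ with $[K_v:\mathbb{Q}]=[L_w:\mathbb{Q}]$ minimal, and uses the argument of Corollary \ref{rational points} to show that this forces $K_v=k$ and $L_w=\ell$, whence the induced isomorphism $Q(K_v)\to Q(L_w)$ of Proposition \ref{trivial observation} gives $Q(k)\cong Q(\ell)$ directly. Your approach is more elementary and self-contained as far as valuations go (you only need the family $\nu_{K,n-1,0}$, not the zero-dimensional ones), but it imports a classical quadratic-form-theoretic input external to the paper's toolkit and uses the purely transcendental hypothesis in an essential way (both for anisotropy preservation and for $k^*\cap K^{*2}=k^{*2}$). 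The paper's argument is slightly longer but is the one that suggests the right generalisation beyond the rational function field case, since it locates $Q(k)$ as a residue hyperfield of an Abhyankar valuation.
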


\begin{proof} Since $k$ is the field of constants of $K$, the canonical group homomorphism from $k^*/k^{*2}$ to $K^*/K^{*2}$ is injective. Claim: The image of the embedding $k^*/k^{*2} \hookrightarrow K^*/K^{*2}$ is equal to $\cap_{v \in \nu_{K,n-1,0}} U_vK^{*2}/K^{*2}$. One inclusion is clear. For the other, use the fact that $D:=k[x_1,\dots,x_n]$ is a UFD.
Suppose $f \in \cap_{v \in \nu_{K,n-1,0}} U_vK^{*2}/K^{*2}$, $f = \frac{g}{h}$, $g,h \in D$, $g,h \ne 0$. Then $f = up_1^{e_1}\dots p_s^{e_s}$, $u \in k^*$, $p_1,\dots, p_s$ irreducibles in $D$. Consider the discrete rank 1 valuation $v_i$ on $K$ associated to $p_i$. Then $v_i \in \nu_{K,n-1,0}$, so $e_i =v_i(f)$ is even, $i=1,\dots,s$. It follows that $p_1^{e_1}\dots p_s^{e_s} \in K^{*2}$ so $f \equiv u \mod K^{*2}$. This proves the claim. Since  the image of $\cap_{v \in \nu_{K,n-1,0}} U_vK^{*2}/K^{*2}$ under $\alpha$ is $\cap_{w \in \nu_{L,n-1,0}} U_wL^{*2}/L^{*2}$, by Corollary \ref{one variable}, this proves (1).
Observe that if $v \leftrightarrow w$, $v\in \nu_{K,0,0}$, $w \in \nu_{L,0,0}$, the diagram
\begin{linenomath}\begin{equation}\label{d4} \xymatrix{
K_v^*/K_v^{*2} \ar[r] &  L_w^*/L_w^{*2} \\
k^*/k^{*2} \ar[u] \ar[r] & \ell^*/\ell^{*2} \ar[u]
}\end{equation}\end{linenomath}
is commutative. The vertical arrows are the maps induced by the field embeddings $k \hookrightarrow K_v$, $\ell \hookrightarrow L_w$. Since the top arrow in diagram (\ref{d4}) defines a hyperfield isomorphism between $Q(K_v)$ and $Q(L_w)$ we know that $[K_v:\mathbb{Q}] = [L_w:\mathbb{Q}]$. Choose $v,w$ with $[K_v:\mathbb{Q}] = [L_w:\mathbb{Q}]$ minimal. The argument in Corollary \ref{rational points} shows that $K_v=k$ and $L_w=\ell$. This proves (2).
Lemma \ref{unramified extension} implies that \begin{linenomath}\[V_k/k^{*2} =\{ r\in k^*/k^{*2} : r \in U_vK^{*2}/K^{*2} \ \forall v \in \nu_{K,n-1,1}\cup \nu_{K,n-1,2}\},\]\end{linenomath} so (3) is clear. Since it is well-known that $r_1$ and $r_2$ are invariant under Witt equivalence, assertion (4) is immediate now, from (3) and Lemma \ref{2 rank}.  This completes the proof.
\end{proof}

\begin{rem} 

(1) The fact that Witt rings of number fields carry some data on the parity of class numbers was first noticed in \cite{Sz2}, and then some additional results were given in \cite{jms}. A deeper study of the relations between Witt equivalence of number fields and 2-ranks of ideal class groups can be found in \cite{cps}.

(2) One can extend Theorem \ref{genus zero case} a bit: Let $V_k^1$ denote the set of all $r \in k^*$ such that $v(r)$ is even for all non-dyadic valuations $v$ of $k$.
By Lemma \ref{unramified extension}, \begin{linenomath}\[V_k^1/k^{*2} = \{ r\in k^*/k^{*2} : r \in U_vK^{*2}/K^{*2} \ \forall v \in \nu_{K,n-1,1} \},\]\end{linenomath} so $\alpha$ maps $V_k^1/k^{*2}$ to $V_{\ell}^1/{\ell}^{*2}$.
Applying this in conjunction with the generalization of Lemma \ref{2 rank} given in  \cite[Lemma 2.4]{ch1} or \cite[Proposition 1]{Sz}, we see that the $S$-class groups of $k$ and $\ell$ have the same $2$-rank,
where %$$S := \{\text{infinite primes}\} \cup \{\text{dyadic primes}\}.$$
$S$ consists of all primes which are infinite or dyadic.
\end{rem}

\noindent
\bf Questions: \rm

(1) In Theorem \ref{genus zero case}, is the hypothesis that $K$ and $L$ are purely transcendental over $k$ and $\ell$ really necessary? %I.e., is it true in general (assuming $K$ and $L$ have fields of constants $k$ and $\ell$) that every hyperfield isomorphism $\alpha : Q(K) \rightarrow Q(L)$ %(where $K$ is a function field with field of constants $k$ and $L$ is a function field with field of constants $\ell$)
%induces a hyperfield isomorphism between $Q(k)$ and $Q(\ell)$?

(2) For arbitrary fields $K$ and $L$ is it true that $K(x) \sim L(x)$ $\Rightarrow$ $K \sim L$? %$k$, $\ell$ is it true that $k(x_1,\dots,x_n) \sim \ell(x_1,\dots,x_n)$ $\Rightarrow$ $k \sim \ell$?

(3) For fixed integers $n\ge 1$, $m\ge 2$, are there infinitely many Witt inequivalent fields $k(x_1,\dots,x_n)$, $k$ a number field, $[k:\mathbb{Q}] = m$?

\smallskip

Question 3 is interesting because, for given $m$, there are only finitely many Witt inequivalent number fields $k$ with $[k:\mathbb{Q}]=m$. For $m = 1,2,3$ and $4$ these numbers are $1,7, 8$ and $29$  respectively; see \cite{cps} and \cite{jms}.

It is proved in \cite{Sz} that if $\ell$ is a number field, $[\ell:\mathbb{Q}]$ even, and $\ell \ne \mathbb{Q}(\sqrt{-1})$, then, for each integer $t\ge 1$, there exists a number field $k$ such that $k \sim \ell$ and the $2$-rank of the class group of $k$ is $\ge t$. This %result 
extends an earlier result %in the quadratic case
in \cite{cps}.

\begin{cor} \label{infinitely many} For fixed $n\ge 1$ and fixed number field $\ell$, $[\ell:\mathbb{Q}]$ even, $\ell \ne \mathbb{Q}(\sqrt{-1})$, there are infinitely many Witt inequivalent fields of the form $k(x_1,\dots,x_n)$, $k$ a number field, $k \sim \ell$.
\end{cor}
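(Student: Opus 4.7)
The plan is to combine Theorem \ref{genus zero case} with the result from \cite{Sz} quoted just above the corollary. The latter supplies, for every integer $t\ge 1$, a number field $k^{(t)}$ with $k^{(t)} \sim \ell$ and $2\operatorname{-rk}(C_{k^{(t)}}) \ge t$. Starting from $t=1$ and passing successively to a larger $t$ each time the $2$-rank fails to strictly increase, we extract a sequence $k_1, k_2, \ldots$ of number fields, all Witt equivalent to $\ell$, such that the integers $2\operatorname{-rk}(C_{k_i})$ are strictly increasing (in particular pairwise distinct).

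Next I would form the rational function fields $K_i := k_i(x_1,\dots,x_n)$ and show these are pairwise Witt inequivalent. If $K_i \sim K_j$ for some $i\ne j$, then Theorem \ref{genus zero case}(4) yields $2\operatorname{-rk}(C_{k_i}) = 2\operatorname{-rk}(C_{k_j})$, contradicting the strict monotonicity of the sequence of $2$-ranks. Hence no two $K_i$ are Witt equivalent.

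Finally, each $K_i$ has the required form $k(x_1,\dots,x_n)$ with $k = k_i$ a number field, and $k_i \sim \ell$ by construction. Thus $\{K_i\}_{i\ge 1}$ is an infinite family of pairwise Witt inequivalent fields of the desired form, completing the proof. No step here is a real obstacle; the corollary is essentially a direct combination of the deep input from \cite{Sz} (providing number fields Witt equivalent to $\ell$ with arbitrarily large $2$-rank of class group) and our Theorem \ref{genus zero case} (which lifts the class group invariant across the rational function field construction).
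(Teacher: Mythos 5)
Your proposal is correct and follows exactly the route the paper intends: the corollary is stated immediately after quoting the result of \cite{Sz}, and the paper leaves the proof implicit precisely because it is the direct combination you describe — extract number fields $k_i \sim \ell$ with pairwise distinct $2$-ranks of class group, then apply Theorem \ref{genus zero case}(4) to conclude the rational function fields $k_i(x_1,\dots,x_n)$ are pairwise Witt inequivalent.
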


%This settles Question 3 for $m$ even.
For odd degree extensions Question 3 remains open. Table 2 in \cite{Sz} shows that each of the 8 Witt equivalence classes of cubic extensions contains fields with $2$-rank of the class group equal to $0,1,$ and $2$. Results in \cite{efo} \cite{sch} \cite{s} \cite{wash} show that $0,1,2,3,4, 5,7$ can occur as the $2$-rank of the class group of a cubic field.

\end{document}